\theoremstyle{plain}
\newtheorem{thm}{Theorem}[section]
\newtheorem{lem}[thm]{Lemma}
\newtheorem{cor}[thm]{Corollary}
\newtheorem{prop}[thm]{Proposition}
\theoremstyle{definition}
\theoremstyle{remark}
\newtheorem{rem}[thm]{Remark}
\renewcommand{\div}{\operatorname{div}}
\begin{document}

\title{ On the multivariate Burgers equation and the incompressible Navier-Stokes equation (part II)}
\author{J\"org Kampen }
\maketitle

\begin{abstract} 
We consider global schemes with $L^{\infty}\times H^2$-estimates for the multivariate Burgers equation and the incompressible Navier-Stokes equation in its Leray projection form. We extend the definition of a global scheme and the global $L^2$-theory of the multivariate Burgers equation to the incompressible Navier-Stokes equation, where we consider estimates of the gradient of the pressure in its Leray projection form in $H^1$ at each time step of the scheme. The extended scheme has a simplified control function (simplified compared to \cite{KNS}), which controls the growth of the Leray projection term. We use the fact that the data in our scheme are in $H^2$ at each time step  such that the data for the Poisson equation related to the elimination of pressure are in $H^1$ at each approximation step. The  growth of the velocity components is linear with respect to the $L^{\infty}\times H^2$-norm on a time scale of time step size $\rho_l\sim \frac{1}{l}$ at each time step $l\geq 1$. The time-step size is 'minimal' in order to make the scheme global while at the same time it provides us with uniform bounds for the first order coefficients of linear approximations of the local solutions. The difference to the recently revised part III of this scheme uses fundamental solutions of scalar parabolic equations with variable first order terms at each time step. The related representations of solutions are not convolutions. Therefore we use the adjoints of the fundamental solutions at each time step. A second difference is that we do not use inheritence of polynomial decay of higher order at each time step. The deacy at spatia infinity is controlled by the Sobolev norm (and by the simplified control function).  
\end{abstract}


2000 Mathematics Subject Classification. 35K40, 35Q30.
\section{Introduction}
In \cite{KB1}  we considered a constructive global scheme for the multivariate Burgers
equation Cauchy problem
\begin{equation}\label{qparasyst1}
\left\lbrace \begin{array}{ll}
\frac{\partial u_i}{\partial t}=\nu\sum_{j=1}^n \frac{\partial^2 u_i}{\partial x_j^2} 
-\sum_{j=1}^n u_j\frac{\partial u_i}{\partial x_j},\\
\\
\mathbf{u}(0,.)=\mathbf{h},
\end{array}\right.
\end{equation}
on a domain $[0,\infty)\times {\mathbb R}^n$, and where the viscosity constant $\nu$ is strictly positive, i.e., $\nu>0$. In the inviscous case $\nu=0$ singlularities may appear at leats for a consireable set of initial data, and it is not possible to define a global scheme in general. In addition to $\nu >0$ we assumed $h_i\in H^s\equiv H^s\left( {\mathbb R}^n\right)$ for $s\in {\mathbb R}$ and do this henceforth in this paper. This assumption is rather strong. It is satisfied if in addition to smoothness of the data $h_i\in C^{\infty}\left({\mathbb R}^n\right),\leq i\leq n$ we have polynomial decay at infinity for the function itself and its partial derivatives of arbitrary order. Here we say that a function $g\in C^{\infty}\left( {\mathbb R}^n\right) $ has polynomial decay of order $m>0$ up to derivatives of order $p>0$ at infinity if for all multiindices $\alpha=(\alpha_1,\cdots,\alpha_n)$ with order $|\alpha|:=\sum_{i=1}^n\alpha_i\leq p$ we have
\begin{equation}
|D^{\alpha}_xg(x)|\leq \frac{C_{\alpha}}{1+|x|^m}
\end{equation}
for some finite constants $C_{\alpha}$. Depending on the order of polynomial decay which we require the latter condition may become a little bit stronger than the former. For example in case of diemsnion $n=3$ the function
\begin{equation}
x\rightarrow \frac{1}{1+|x|^2}
\end{equation}
is smooth and has square integrable multivariate derivatives, i.e. is in $H^m\left({\mathbb R}^3\right)$ for integers $m\geq 0$, but it has only polynomial decay of second order at infinity. Anyway the latter classical condition is satisfied by all solution of physical interest and may also be assumed. 
Furthermore, in order to have a uniform representation of the following argument we assume that $n\geq 3$. Some arguments require indeed that $n=3$, which is another natural assumption, but the additional problems of our approach for $n>3$ are not insurmountable. Therefore, we shall indicate where we need $n=3$.
The differences of the multivariate Burgers problem to the Cauchy problem for the incompressible Navier-Stokes equation, i.e.,
\begin{equation}\label{nav}
\left\lbrace\begin{array}{ll}
    \frac{\partial\mathbf{v}}{\partial t}-\nu \Delta \mathbf{v}+ (\mathbf{v} \cdot \nabla) \mathbf{v} = - \nabla p, \\
    \\
\nabla \cdot \mathbf{v} = 0,~~~~t\geq 0,~~x\in{\mathbb R}^n,\\
\\
\mathbf{v}(0,.)=\mathbf{h},
\end{array}\right.
\end{equation}
are the source term in form of the negative gradient of the pressure, and the incompressibility condition. A classical solution with velocity components $v_i,~1 \leq i\leq n$, and such that $v_i(t,.)\in C^2\cap H^2$ for all $t$ solves 
\begin{equation}\label{Navleray}
\left\lbrace \begin{array}{ll}
\frac{\partial v_i}{\partial t}-\nu\sum_{j=1}^n \frac{\partial^2 v_i}{\partial x_j^2} 
+\sum_{j=1}^n v_j\frac{\partial v_i}{\partial x_j}=\\
\\ \hspace{1cm}\int_{{\mathbb R}^n}\left( \frac{\partial}{\partial x_i}K_n(x-y)\right) \sum_{j,k=1}^n\left( \frac{\partial v_k}{\partial x_j}\frac{\partial v_j}{\partial x_k}\right) (t,y)dy,\\
\\
\mathbf{v}(0,.)=\mathbf{h},
\end{array}\right.
\end{equation}
for $1\leq i\leq n$, and vice versa. In this case the Poisson equation
\begin{equation}\label{poissonp}
\begin{array}{ll} 
- \Delta p=\sum_{j,k=1}^n \left( \frac{\partial}{\partial x_k}v_j\right) \left( \frac{\partial }{\partial x_j}v_k\right)
\end{array}
\end{equation}
has the well-defined solution
\begin{equation}\label{pp}
p(t,x)=-\int_{{\mathbb R}^n}K_n(x-y)\sum_{j,k=1}^n\left( \frac{\partial v_k}{\partial x_j}\frac{\partial v_j}{\partial x_k}\right) (t,y)dy,
\end{equation}
where 
\begin{equation}
K_n(x):=\left\lbrace \begin{array}{ll}\frac{1}{2\pi}\ln |x|,~~\mbox{if}~~n=2,\\
\\ \frac{1}{(2-n)\omega_n}|x|^{2-n},~~\mbox{if}~~n\geq 3\end{array}\right.
\end{equation}
is the Poisson kernel. Here, we mention the case $n=2$ and invite the reader to apply the following argument in the case $n=2$. However, since this case is different from all cases $n\geq 3$ we assumed $n\geq 3$ above and stay with this assumption. We also mention that $|.|$ denotes the Euclidean norm and $\omega_n$ denotes the area of the unit $n$-sphere. In equation (\ref{Navleray}) we use the fact that the formula (\ref{pp}) has a well defined gradient such that
\begin{equation}\label{pp1}
-\nabla p(t,x)=\int_{{\mathbb R}^n}\nabla K_n(x-y)\sum_{j,k=1}^n\left( \frac{\partial v_k}{\partial x_j}\frac{\partial v_j}{\partial x_k}\right) (t,y)dy.
\end{equation}
This is the term we need to control in order to extend the scheme for the multivariate Burgers equation in \cite{KB1} to the present situation.
 Note that for $n= 3$ the functions
\begin{equation}\label{Kkernel}
x\rightarrow \frac{\partial}{\partial x_l}K_n(x)=\omega_n^{-1}\frac{x_l}{|x|^n}
\end{equation}
are not integrable outside a ball $B^n_R(0):=\left\lbrace x||x|\leq R\right\rbrace $ with origin $0$ and radius $R>0$, i.e., are not in  $L^1\left( {\mathbb R}^n\setminus B^n_R(0)\right) $. It is useful to note that we have local integrability in the $L^1$-sense of the first partial derivatives of the kernel $K$.   For $n\geq 3$ we also have square integrability of these first order partial derivatives outside a ball around zero since
\begin{equation}
\int_{{\mathbb R}^n\setminus B^n_R(0)}\omega_n^{-1}\frac{|x_l|^2}{|x|^{2n}}dx\precsim \omega_n^{-1}\int_{r\geq R}\frac{dr}{r^{2n-2-n+1}}<\infty.
\end{equation}
Integrability in the $L^1$-sense of functionals as in (\ref{pp1}) is harder to achieve, even if we invoke partial derivatives using some information about the gradient of velocity. Note that even if we consider the second derivative of the kernel, then we get
\begin{equation}\label{pk2}
\frac{\partial^2}{\partial x_i\partial x_j}K_n(x):=\left\lbrace \begin{array}{ll} -\frac{n x_ix_j}{\omega_n}|x|^{-n-2},~~\mbox{if}~~i\neq j,\\
\\ \frac{|x|^2-nx^2_j}{\omega_n}|x|^{-n-2},~~\mbox{if}~~i=j,\end{array}\right.
\end{equation}
and these functions are surely square integrable outside a ball around the origin for $n\geq 3$ but not intergable in the $L^1$ sense outside a ball around zero. Now the 
gradient of the pressure in (\ref{pp}) contains a source of a Poisson equation in the form
\begin{equation}\label{source}
 \sum_{j,k=1}^n\left( \frac{\partial v_k}{\partial x_j}\frac{\partial v_j}{\partial x_k}\right).
\end{equation}
If our approximations $v^{\rho,m,l}_i$ (cf. scheme below) of the velocity components  $v_k$ in (\ref{source}) at each time step $l$ are in $H^2$ and have a uniform bound for all iteration step numbers $m$ of the approximation such that we have a bound $C+lC$ that is linear with respect to the time step number $l$, then we have the possibility to extend $L^2$-theory for constructive scheme of the multivariate Burgers equation \cite{KB1} to the incompressible Navier-Stokes equation. It turns out that this is difficult to achieve for the original scheme, but for an equivalent controlded scheme it is possible. We considered controlled scheme in \cite{KNS} and in \cite{K3}. However, the control function considered in this paper is considerably simpler. Indeed, if we could find an upper bound for the Lera pro jection term which grows linearly wth respect to the time step number, then it would be quite easy to set up a global scheme. In this case we may use for all approximations the pointwise relation
\begin{equation}\label{source2}
\begin{array}{ll}
 \sum_{j,k=1}^n\left( \frac{\partial v^{\rho,m,l}_k}{\partial x_j}\frac{\partial v^{\rho,m,l}_j}{\partial x_k}\right)(\tau,x)\\
 \\
 \leq \sum_{j,k=1}^n\frac{1}{2}\left( \left( \frac{\partial v^{\rho,m,l}_k}{\partial x_j}\right)^2(\tau,x)+ \left( \frac{\partial v^{\rho,m,l}_j}{\partial x_k}\right)^2\right)(\tau,x)
 \end{array}
\end{equation}
and the linear bound leads to the estimates
\begin{equation}\label{psource}
\begin{array}{ll}
{\Big |}\sum_{j,k=1}^n\left( \frac{\partial v^{\rho,m,l}_k}{\partial x_j}\frac{\partial v^{\rho,m,l}_j}{\partial x_k}\right){\Big |}_{L^1}
\leq \sum_{j,k=1}^n\frac{1}{2}\left( {\Big |}v^{\rho,m,l}_k{\Big |}_{H^1}+ {\Big |} v^{\rho,m,l}_j{\Big |}_{H^1}\right) \\
\\
=n\sum_{k=1}^n{\Big |}v^{\rho,m,l}_k{\Big |}_{H^1}\leq n^2(C+lC)
\end{array}
\end{equation}
As we said, it seems diffcult to get such a linear bound directly, but for an equivalent controlled function
\begin{equation}
v^{r,\rho,l}_i=v^{\rho,l}_i+r^l_i,~1\leq i\leq n
\end{equation}
with some simple bounded differentiable functions $r^l_i,~1\leq i\leq n$ defined on $[l-1,l]\times {\mathbb R}^n$ such an estimate can be achieved. We shall observe that we can define $r^l_i$ even in such a way that the functions $v^{r,\rho}_i:[0,\infty)\times {\mathbb R}^n\rightarrow {\mathbb R}$ which equal the restrictions of $v^{r,\rho,l}_i$ on $[l-1,l]\times {\mathbb R}^n$ have classical time derivatives across the discrete time values $\tau=l$ for all nonnegative integers $l\geq 0$. Our scheme uses classical representations in terms of fundamental solutions of linear parabolic approximating equations. These representations are not covolutions but the involved fundamental solutions and their first order spatial derivatives have Gaussian majorants and these can be used in order to apply estimates for convolutions.   
For the estimation of convolutions  we use the generalized Young inequality, i.e., the fact that for $1\leq p,q,r\leq \infty$ 
\begin{equation}\label{Y1}
 f\in L^q~\mbox {and}~ g\in L^p~\rightarrow  f\ast g\in L^r,\mbox{if}~\frac{1}{p}+\frac{1}{q}=1+\frac{1}{r},
\end{equation}
and
\begin{equation}\label{Y2}
|f\ast g|_{L^r}\leq |f|_{L^p} |g|_{L^q}
\end{equation}
for  a convolution $f\ast g$. These convolution estimates will be useful to us in the form
\begin{equation}\label{Y3}
|f\ast g|_{L^2}\leq |f|_{L^2} |g|_{L^1},
\end{equation}
and in the form
\begin{equation}\label{Y4}
\begin{array}{ll}
|f\ast g|_{L^{\infty}}\leq |f|_{L^p} |g|_{L^q}\mbox{ for }\\
\\
p=q=\frac{1}{2}\mbox{ and }r=1\mbox{ and }p=\infty,~q=\frac{1}{2}~\mbox{and}~r=1
\end{array}
\end{equation}
For example for a localized Laplacian kernel (\ref{psource}) together with (\ref{Y1}) and (\ref{Y2}) for $r=2$ and $p=1$ (estimate of the source), and $q=2$ ($L^2$-estimate of the source outside a ball around zero) lead us close to an $L^2$-bound for the approximations of the gradient of the pressure. In order to show that our scheme is global next to a simple control function and a specific time scale we use properties of the convolutions (shifting derivatives) and the fact that the velocity approximations are in $H^2$. We also use Gaussian a priori estimates of fundamental solutions and their first order derivatives.  These Gaussian upper bounds lead to upper bound convolutions in the representation of approximations $v^{\rho,m,l}_j$ of the velocity, which is convenient in order to apply the generalized Young inequality.
In order to achieve estimates in different function spaces we can extend this idea observing that the part of the source term considered in (\ref{source}) contains derivatives which may be shifted to the kernel (by some elementary estimates and using partial integration) outside a ball of origin zero while inside this ball we may use the local regularity of the function in (\ref{pp})- or its counterparts in the scheme. However, for $L^2$-theory it is enough to observe that for some smooth function $\phi$ supported on the ball of radius $1$, i.e., a function $\phi\in C^{\infty}(B_1(0))$ such that we have $\phi K_{,i}\in L^1$ and $(1-\phi)K_{,i}\in L^2$. We shall consider the application to other function spaces elsewhere. Now in a scheme we may consider local linearizations in which an approximation function $v^{f,\rho,l}_j,~1\leq j\leq n$ follows (with respect to an iteration) an approximative function $v^{g,\rho,l}_j,~1\leq j\leq n$ on a domain $D^{\tau}_l:=[l-1,l]\times {\mathbb R}^n$ with step size $\rho_l\sim \frac{1}{l}$ and transformed time $\tau =\rho_l\tau$ (similar as in \cite{KB1}). We then have to find a linear bound on the growth of the velocity function components, i.e., a bound for differences of form (using Einstein notation for usual derivatives)
\begin{equation}\label{burgfg*}
\begin{array}{ll}
v^{f,\rho,l}_i(\tau,x)-v^{g,\rho,l}_i(\tau,x)=\\
\\
=-\rho_l\int_{(l-1)}^{\tau}\int_{{\mathbb R}^n}\sum_{j=1}^n\left( f_j-g_j\right)(s,y)\frac{\partial v^{g,\rho,l}_i}{\partial x_j}(s,y)\Gamma^l_f(\tau,x;s,y)dyds+\\
\\
\int_{l-1}^{\tau}\rho_l\int_{{\mathbb R}^n}\int_{{\mathbb R}^n}K_{n,i}(z-y)\times\\
\\
{\Big (} \left( \sum_{j,k=1}^n\left( f_{k,j}+g_{k,j}\right)(s,y) \right) \times\\
\\
\left( \left( f_{j,k}-g_{j,k}\right)(s,y) \right) {\Big)}\Gamma^l_f(\tau,x;s,z)dydzds,\\
\end{array}
\end{equation}
and where $\Gamma^l_f$ is the fundamental solution of the linear (scalar!) parabolic equation
\begin{equation}
\frac{\partial \Gamma^l_f}{\partial \tau}-\rho_l\nu \Delta \Gamma^l_f+\rho_l\sum_{j=1}^n f_j\frac{\partial \Gamma^l_f}{\partial x_j}=0.
\end{equation}
For $n=3$ the natural bound is with respect to a $H^2$-norm, and once this is known it is easy to see that we can get a bound with respect to the $H^m$-norm for $m$ arbitrary as well. A linear bound is sufficient by the choice of step size $\rho_l\sim \frac{1}{l}$ if we have a gobal linear bound of the Leray projection term. It seems that at this point it is necessary to introduce a control function. The idea of a simplified control function is as follows. We introduce for $1\leq i\leq n$ functions $r_i:\left[0,\infty\right)\times {\mathbb R}^n\rightarrow {\mathbb R}$ which are linearly bounded with respect to time with bounded first order time derivatives. Furthermore the functions $r_i$ are bounded with respect to the spatial variables and have bounded spatial derivatives up to second order (a function space which we may denote by the symbol $R$). Next consider the function  
\begin{equation}
\mathbf{v}^{r}:=\mathbf{v}+\mathbf{r},
\end{equation}
where $\mathbf{v}=\left(v_1,\cdots v_n\right)^T$ and  $\mathbf{r}=\left(r_1,\cdots r_n\right)^T$.  These control functions $r_i$ are not known a priori, of course, but are constructed inductively with the time step number $l\geq 1$. It is shown then within the analysis of the scheme that they are indeed globally linearly bounded with respect to time and satisfy some other convenient properties. 
Note that the function $\mathbf{v}=\left(v_1+r_1,\cdots v_n+r_n\right)^T$
satisfies the equation   
\begin{equation}\label{Navleray}
\left\lbrace \begin{array}{ll}
\frac{\partial v^r_i}{\partial t}-\nu\sum_{j=1}^n \frac{\partial^2 v^r_i}{\partial x_j^2} 
+\sum_{j=1}^n v^r_j\frac{\partial v^r_i}{\partial x_j}=\\
\\
\frac{\partial r_i}{\partial t}-\nu\sum_{j=1}^n \frac{\partial^2 r_i}{\partial x_j^2} 
+\sum_{j=1}^n r_j\frac{\partial v^r_i}{\partial x_j}+\sum_{j=1}^n v^r_j\frac{\partial r_i}{\partial x_j}+\sum_{j=1}^n r_j\frac{\partial r_i}{\partial x_j}
\\
\\+\int_{{\mathbb R}^n}\left( \frac{\partial}{\partial x_i}K_n(x-y)\right) \sum_{j,k=1}^n\left( v^r_{k,j}v^r_{j,k}\right) (t,y)dy,\\
\\
-2\int_{{\mathbb R}^n}\left( \frac{\partial}{\partial x_i}K_n(x-y)\right) \sum_{j,k=1}^n\left( v^r_{k,j}r_{j,k}\right) (t,y)dy\\
\\
-\int_{{\mathbb R}^n}\left( \frac{\partial}{\partial x_i}K_n(x-y)\right) \sum_{j,k=1}^n\left( r_{k,j}r_{j,k}\right) (t,y)dy\\
\\
\mathbf{v}^r(0,.)=\mathbf{h}.
\end{array}\right.
\end{equation}
If we can solve this equation for $v^r_i\in C^{1,2}\left(\left[0,\infty\right)\times {\mathbb R}^n\right) , 1\leq i\leq n$ for an appropriate control function space, where we construct $r\in R$ time-step by time-step, then it is clear that  $v_i\in C^{1,2}\left(\left[0,\infty\right)\times {\mathbb R}^n\right)$ for $1\leq i\leq n$ is a global classical solution of the incompressible Navier Stokes equation. The construction is done time-step by time step on domains $\left[l-1,l\right]\times {\mathbb R}^n,~l\geq 0$ where the restriction of the contro function to $\left[l-1,l\right]\times {\mathbb R}^n$ is denoted by $r^l_i$. Here the local equation then is defined in terms of transformed time coordinates $t=\rho_l\tau$ , where $\rho_l\sim \frac{1}{l}$ and $\tau\in [l-1,l]$ at each time step. The local functions $v^{r,\rho,l}_i$ with $v^{r,\rho,l}_i(\tau,x)=v^{r,l}_i(t,x)$ are defined inductively on $\left[l-1,l\right]\times {\mathbb R}^n$ along with the control function $r^l$ via the Cauchy problem for
\begin{equation}
\mathbf{v}^{r,\rho,l}:=\mathbf{v}^{\rho,l}+\mathbf{r},
\end{equation}
where $\mathbf{v}^{\rho,l}=\left(v^{\rho,l}_1,\cdots ,v^{\rho,l}_n \right)^T$ is the time transformed solution of the incompressible Navier Stokes equation (in Leray projection form) restricted to the domain
$\left[l-1,l\right]\times {\mathbb R}^n$ where $v^{\rho,l}_i(\tau,x)=v^{l}_i(t,x)$ for $\tau\in [l-1,l]$ and $v^{l}_i,~1\leq i\leq n$ denotes the restriction solution of the incompressible Navier Stokes equation (in Leray projection form) to the domain
$\left[\sum_{m=1}^{l-1}\rho_m,\sum_{m=1}^l\rho_m\right]\times {\mathbb R}^n$. Here
where $\mathbf{v}=\left(v_1,\cdots v_n\right)^T$ and  $\mathbf{r}=\left(r_1,\cdots r_n\right)^T$. Then the function $\mathbf{v}^{r,\rho,l}=\left(v^{\rho,l}_1+r^l_1,\cdots v^{\rho,l}_n+r^l_n\right)^T$
satisfies the equation   
\begin{equation}\label{Navleraycontrolledl}
\left\lbrace \begin{array}{ll}
\frac{\partial v^{r,\rho,l}_i}{\partial \tau}-\rho_l\nu\sum_{j=1}^n \frac{\partial^2 v^{r,\rho,l}_i}{\partial x_j^2} 
+\rho_l\sum_{j=1}^n v^{r,\rho,l}_j\frac{\partial v^{r,\rho,l}_i}{\partial x_j}=\\
\\
\rho_l\frac{\partial r^l_i}{\partial t}-\rho_l\nu\sum_{j=1}^n \frac{\partial^2 r^l_i}{\partial x_j^2} 
+\rho_l\sum_{j=1}^n r^l_j\frac{\partial v^{r,\rho,l}_i}{\partial x_j}\\
\\
+\rho_l\sum_{j=1}^n v^{r,\rho,l}_j\frac{\partial r^l_i}{\partial x_j}+\rho_l\sum_{j=1}^n r^l_j\frac{\partial r^l_i}{\partial x_j}
\\
\\+\rho_l\int_{{\mathbb R}^n}\left( \frac{\partial}{\partial x_i}K_n(x-y)\right) \sum_{j,k=1}^n\left( \frac{\partial v^{r,\rho,l}_k}{\partial x_j}\frac{\partial v^{r,\rho,l}_j}{\partial x_k}\right) (\tau,y)dy,\\
\\
-2\rho_l\int_{{\mathbb R}^n}\left( \frac{\partial}{\partial x_i}K_n(x-y)\right) \sum_{j,k=1}^n\left( \frac{\partial v^{r,\rho,l}_k}{\partial x_j}\frac{\partial r^l_j}{\partial x_k}\right) (t,y)dy\\
\\
-\rho_l\int_{{\mathbb R}^n}\left( \frac{\partial}{\partial x_i}K_n(x-y)\right) \sum_{j,k=1}^n\left( \frac{\partial r^l_k}{\partial x_j}\frac{\partial r^l_j}{\partial x_k}\right) (\tau,y)dy\\
\\
\mathbf{v}^{r,\rho,l}(l-1,.)=\mathbf{v}^{r,\rho,l-1}(l-1,.).
\end{array}\right.
\end{equation}
In \cite{KNS} the idea for a construction of a control function $r^l_i$ was to solve at each time step $l\geq 1$ and for all $1\leq i\leq n$ a Cauchy problem for $r^l_i$ such that the rigth side of the first equation in (\ref{Navleraycontrolledl}) equals a source function $\phi^l_i$ where such a source function is chosen from the data obtained at the previous time step (resp. from the initial data at the first time step). For example this consumption function  may be chosen proportional to $-v^{r,\rho,l-1}_i(l-1)$ for all $1\leq i\leq n$. With the right choice of the time step size $\rho_l>0$ this ensures that the value function of the global controlled scheme is globally bounded. In this paper we choose another method which simplifies the construction of the control function, where we use the fact that it is sufficent to get a linear global bound for the Leray projection term in order to kmake the scheme global while choosing a time step size $\rho_l\sim\frac{1}{l}$.
The idea in this paper is to construct $r^l_i,1\leq i\leq n$ inductively with respect to the time step number $l$ via a local iterative solution scheme for (\ref{Navleraycontrolledl}) such that the global functions $r_i,~1\leq i\leq n$ are in the function space $R$ (defined above). This construction depends on the local solution scheme. This local solution scheme for the controlled function $v^{r,\rho,l}_i,~1\leq i\leq n$ approximates at each time step $l\geq 1$ this very function by a local functional series
\begin{equation}
v^{r,\rho,l}_i=v^{r,\rho,l,0}_i+\sum_{k=1}^{\infty}\delta v^{r,\rho,l,k}_i
\end{equation}
where the functional increments $\delta v^{r,\rho,l,k}_i=v^{r,\rho,l,k}_i-v^{r,\rho,l,k-1}_i$  satisfy some contraction property in a function space $L^{\infty}\cap C^1\times H^2\cap C^2_0$, where $C^2_0\equiv C^2_0\left({\mathbb R}^n\right)$ is the functions space of twice differentiable functions vanishing at infinity. Note that this latter space is closed with respect to the uniform norm. The idea of the control function is to define at each time step $l\geq 1$ the increment of the control function via the negative increment of the first linear approximation of the local functional series, i.e., to define
\begin{equation}
r^l_i(.,.)=r^{l-1}_i-\left(v^{*,\rho,1,l}_i(.,.)-v^{r,\rho,l-1}_i(l-1,.)\right), 
\end{equation}
where $v^{*,\rho,1,l}_i(.,.)$ satisfies
\begin{equation}\label{Navlerayuncontrolledl}
\left\lbrace \begin{array}{ll}
\frac{\partial v^{*,\rho,1,l}_i}{\partial \tau}-\rho_l\nu\sum_{j=1}^n \frac{\partial^2 v^{r,\rho,1,l}_i}{\partial x_j^2} 
+\rho_l\sum_{j=1}^n v^{r,\rho,l-1}_j\frac{\partial v^{*,\rho,1,l}_i}{\partial x_j}=\\
\\+\rho_l\int_{{\mathbb R}^n}\left( \frac{\partial}{\partial x_i}K_n(x-y)\right) \sum_{j,k=1}^n\left( \frac{\partial v^{*,\rho,l-1}_k}{\partial x_j}\frac{\partial v^{\rho,l-1}_j}{\partial x_k}\right) (l-1,y)dy,\\
\\
\mathbf{v}^{*,\rho,1,l}(l-1,.)=\mathbf{v}^{r,\rho,l-1}(l-1,.).
\end{array}\right.
\end{equation}

It turns out then that the higher order corrections $\sum_{k=1}^{\infty}\delta v^{r,\rho,l,k}_i$ contribute just to a linear growth of the Leray projection term for the solution function of the controlled system. This is sufficient in order to obtain a global scheme with a regular solution function since we may choose a time step size of order $\rho_l\sim \frac{1}{l}$.
This is roughly the program of this paper. In the next section we reconsider the global scheme for the multivariate Burgers equation. Then in the last section we extend our considerations to the incompressible Navier-Stokes equation.

\section{The global scheme for the multivariate Burgers equation}
Let us first reconsider the constructive approach to global existence for the Cauchy problem of the multidimensional Burgers equation in (\ref{qparasyst1}). 
The following considerations apply also to a class of initial-value boundary problems on different domains $[0,\infty)\times \Omega$ with $\Omega\subseteq {\mathbb R}^n$.
We assume smooth initial data $\mathbf{h}=(h_1,\cdots,h_n)^T$ where for all $1\leq i\leq n$ the functions $h_i$ have polynomial decay at infinity- this implies that the assumption $h_i\in H^s$ holds for all $1\leq i\leq n$ and $s\in {\mathbb R}$. We review some ideas in \cite{KB1} from a slightly different point of view. Only ideas of proofs are given. We provide more details when we consider the extension to the incompressible Navier-Stokes equation.
The idea of the constructive approach to global existence and regularity for this Cauchy problem and related Cauchy problems is to set up an iteration scheme with respect to time $\tau=\rho_l t$ at each time step $l\geq 1$ and with a time step size
\begin{equation}
\rho_l\sim \frac{1}{l},
\end{equation}
and such that at each time step an equation
\begin{equation}\label{qparasyst2}
\left\lbrace \begin{array}{ll}
\frac{\partial u^{\rho,l}_i}{\partial \tau}=\rho_l\nu\sum_{j=1}^n \frac{\partial^2 u^{\rho,l}_i}{\partial x_j^2} 
-\rho_l\sum_{j=1}^n u^{\rho,l}_j\frac{\partial u^{\rho,l}_i}{\partial x_j},\\
\\
\mathbf{u}^{\rho,l}(l-1,.)=\mathbf{u}^{\rho,l-1}(l-1,.)
\end{array}\right.
\end{equation} 
is solved on the domain $[l-1,l]\times {\mathbb R}^n$ by a time-local iterative scheme. he scheme becomes global if we can establish a linear bound for the value function. The solution is constructed via a functional series
\begin{equation}\label{funci2}
u^{\rho ,l }_i=u^{\rho,1,l}_i+\sum_k \delta u^{\rho, k+1,l}_i, 1\leq i\leq n,
\end{equation} 
 where $u^{\rho,1,l}_i$ solves 
 \begin{equation}\label{scalparasystlin10}
\left\lbrace \begin{array}{ll}
\frac{\partial u^{\rho,1,l}_i}{\partial \tau}=\rho_l\left( \nu\sum_{j=1}^n \frac{\partial^2 u^{\rho,1,l}_i}{\partial x_j^2} 
-\sum_{j=1}^n u^{\rho,l-1}_j(l-1,.)\frac{\partial u^{\rho,1,l}_i}{\partial x_j}\right) ,~~1\leq i\leq n,\\
\\
{\bf u}^{\rho,1,l}(l-1,.)={\bf u}^{l-1}(l-1,.),
\end{array}\right.
\end{equation} 
and $\delta u^{\rho,k,l}_i,~1\leq i\leq n$ solves
\begin{equation}\label{deltaurhok0}
\left\lbrace \begin{array}{ll}
\frac{\partial \delta u^{\rho,k+1,l}_i}{\partial \tau}=\rho_l\left( \sum_{j=1}^n \frac{\partial^2 \delta u^{\rho,k+1,l}_i}{\partial x_j^2} 
-\sum_{j=1}^n u^{\rho,k,l}_j\frac{\partial \delta u^{\rho,k+1,l}_i}{\partial x_j}\right) \\
\\
\hspace{2cm}-\rho_1\sum_j\left(\delta u^{\rho,k,l}_j\frac{\partial u^{\rho,k,l}}{\partial x_j} \right),\\ 
\\
\mathbf{\delta u}^{\rho,k+1,l}(l-1,.)= 0,
\end{array}\right.
\end{equation}
and where $\delta u^{\rho,k+1,l}_{i}=u^{\rho,k+1,l}_{i}-u^{\rho,k,l}_{i}$ for $k\geq 1$, and 
$\delta u^{\rho,1,1}_j= u^{\rho,1,1}_j-h_j$ at the first time step.
 Although the representations of the classical solutions of the members of the functional series ($\delta u^{\rho,k,l}_i$) in terms of fundamental solutions are not convolutions we may estimate them by convolutions using Gaussian a priori estimates of fundamental solutions. In order to do this we may use the Levy expansion of fundamental solutions as we did in \cite{KB1} or we may use properties of the adjoint.  For the estimation of convolutions  we use the generalized Young inequality (\ref{Y1}) and (\ref{Y2}) above.
This leads to
\begin{lem}
 Let $u^{\rho,l-1}_i(l-1,.)\in H^s$ for all $1\leq i\leq n$ and for any given $s\in {\mathbb R}$. Then 
 \begin{equation}
 u^{\rho, 1,l}_i(\tau,.)\in H^s~\mbox{and}~\delta u^{\rho, k,l}_i(\tau,.)\in H^s
 \end{equation}
uniformly in $\tau$ and for all $1\leq i\leq n$ and all $s\in {\mathbb R}$. Furthermore,
\begin{equation}
u^{\rho, l}_i(\tau,.):=u^{\rho, 1,l}_i(\tau,.)+\sum_{k=1}^{\infty}\delta u^{\rho,k,l}_i\in H^s.
\end{equation}
\end{lem}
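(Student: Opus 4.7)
The plan is to represent each term of the functional series (\ref{funci2}) through the fundamental solution of the linear scalar parabolic operator appearing on the left-hand side of (\ref{scalparasystlin10}) and (\ref{deltaurhok0}), and then to propagate Sobolev regularity by combining Gaussian a priori estimates of this fundamental solution with the generalized Young inequality (\ref{Y3}). Denote by $\Gamma^l$ the fundamental solution of
\[
\partial_\tau-\rho_l\nu\Delta+\rho_l\sum_{j=1}^n u^{\rho,l-1}_j(l-1,\cdot)\,\partial_{x_j},
\]
and by $\Gamma^{k,l}$ the analogous fundamental solution with drift coefficient $\rho_l u^{\rho,k,l}_j$. Then $u^{\rho,1,l}_i$ is given by the integral of the initial datum $u^{\rho,l-1}_i(l-1,\cdot)$ against $\Gamma^l$, while each $\delta u^{\rho,k+1,l}_i$ admits the analogous inhomogeneous Duhamel representation against $\Gamma^{k,l}$ with source $-\rho_l\sum_j \delta u^{\rho,k,l}_j\,\partial_{x_j}u^{\rho,k,l}_i$.

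For the bound on $u^{\rho,1,l}_i(\tau,\cdot)$ in $H^s$, I would transfer the spatial derivatives $D^\alpha_x$ with $|\alpha|\leq s$ onto the kernel via the adjoint (as the abstract emphasizes, since the representation is not a convolution), then invoke the Gaussian majorant of the form $|D^\beta_y\Gamma^l(\tau,x;l-1,y)|\leq C_\beta(\rho_l(\tau-l+1))^{-|\beta|/2} G_{c\rho_l(\tau-l+1)}(x-y)$ produced by the Levy parametrix construction already used in \cite{KB1}. Pairing this pointwise bound with Young's inequality in the form (\ref{Y3}) yields $|D^\alpha_x u^{\rho,1,l}_i(\tau,\cdot)|_{L^2}\leq C_\alpha\,|u^{\rho,l-1}_i(l-1,\cdot)|_{H^{|\alpha|}}$, uniformly in $\tau\in[l-1,l]$ and for every $s$, because the Gaussian has $L^1$-norm independent of time (the singular $\tau$-dependent factor is absorbed by putting as many derivatives on the datum as possible via the adjoint).

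The increments $\delta u^{\rho,k+1,l}_i$ I would handle by induction on $k$. Given $u^{\rho,k,l}_i(\tau,\cdot)\in H^s$ for every $s$, the inhomogeneous Duhamel representation together with the same adjoint-plus-Gaussian-majorant argument delivers, for any multi-index $\alpha$,
\[
|D^\alpha_x\delta u^{\rho,k+1,l}_i(\tau,\cdot)|_{L^2}\leq C_\alpha\,\rho_l\int_{l-1}^\tau\Big|\textstyle\sum_j \delta u^{\rho,k,l}_j\,\partial_{x_j}u^{\rho,k,l}_i\Big|_{H^{|\alpha|}}(s)\,ds,
\]
and the Sobolev multiplication inequality (the algebra property of $H^s$ for $s>n/2$, together with the standard lower-order product estimates) bounds the integrand by $C\,\|\delta u^{\rho,k,l}\|_{H^s}\,\|u^{\rho,k,l}\|_{H^{s+1}}$. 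For the step size $\rho_l\sim 1/l$ one then obtains a contraction factor $q<1$ on $L^\infty\bigl([l-1,l];H^s\bigr)$, which simultaneously provides a uniform $H^s$-bound on each $\delta u^{\rho,k,l}_i$ and convergence of $\sum_k\delta u^{\rho,k,l}_i$ in $H^s$, proving the last assertion.

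The hard part will be the absence of a convolution structure: $\Gamma^l$ and $\Gamma^{k,l}$ have variable coefficients, so Young's inequality is not directly applicable. The circumvention is precisely the one the introduction emphasizes, namely using the adjoint of the fundamental solution to shift derivatives onto the kernel and then paying for the variable coefficients with the pointwise Gaussian majorants, to which Young does apply. Ensuring that the constants in these majorants are independent of the iteration index $k$ is the delicate point, and it rests on the induction hypothesis that the drift $\rho_l u^{\rho,k,l}_j$ stays uniformly bounded together with its relevant derivatives, a property that must be built into the fixed-point set on which the contraction is set up.
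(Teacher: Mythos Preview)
Your proposal is essentially correct and follows the same strategy as the paper: represent $u^{\rho,1,l}_i$ and $\delta u^{\rho,k+1,l}_i$ via the fundamental solutions $\Gamma^l$, $\Gamma^{k,l}$, use the adjoint to redistribute spatial derivatives so that at most one derivative sits on the kernel, replace the kernel by its Gaussian majorant to obtain a genuine convolution, and then apply Young's inequality (\ref{Y3}) to get $L^2$-bounds; the contraction for the increments closes the series in $H^s$.

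The one organizational difference worth flagging is that the paper does not attack $H^s$ for all $s$ directly. It first runs the argument in $H^2$ (for $n=3$), because this is the threshold at which Sobolev embedding guarantees the H\"older continuity of the first-order coefficients $u^{\rho,l-1}_j(l-1,\cdot)$ and $u^{\rho,k,l}_j$, without which the fundamental solutions $\Gamma^l$, $\Gamma^{k,l}$ and their Levy expansions are not available in the first place. Only after the $H^2$-contraction yields a H\"older limit does the paper bootstrap to higher $H^s$ by differentiating the representation, again shifting derivatives via the adjoint and reusing the first-order Gaussian bound. Your write-up implicitly assumes Gaussian majorants $|D^\beta_y\Gamma^l|\lesssim (\tau-s)^{-|\beta|/2}G$ for arbitrary $|\beta|$, which are not available at the level of merely H\"older drifts; the paper's device of never needing more than one derivative on the kernel (the local bound $C(t-s)^{-\mu}|x-y|^{-(n+1-2\mu)}$ with $\mu\in(0.5,1)$) is what makes the time integral finite. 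Your closing paragraph shows you are aware of this issue, but making the two-stage structure ($H^2$ first, then lift) explicit would tighten the argument and match the paper's logic.
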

\begin{proof}We consider the case $n=3$ (the generalisation to $n>3$ is similar).
This is proved for $H^2$ first with classical representations of the fundamental solution. The latter exists for $n=3$ since first order coefficients for the equation for $u^{\rho,1,l}_i$ are of form $u^{\rho,l-1}_j\in H^2$ and hence H\"{o}lder continuous. Inductively with the substep number $k\geq 1$ the same is true for the first order coefficients of the equation for $\delta u^{\rho,k+1,l}_i$. Then we may represent the solutions  for  $u^{\rho,1,l}_i$ and for $\delta u^{\rho,k+1,l}_i$ in terms of the fundamental solutions. Using the adjoint and partial integration we may represent the approximative solution function and its derivatives up to second order by a representation which contains only first order derivatives of the respective fundamental solutions (cf. \cite{KB1} of this paper or the next section of this paper). Then derivatives of first order of the fundamental solution have time-integrable Gaussian a priori estimates. We can write down estimates which are convolutions, and then we may use the generalized Young inequality in order to establish $L^2$ estimates for these convolutions (using $L^1$ estmates of the Gaussian). Note that locally we may use the local bound
\begin{equation}
\frac{C}{(t-s)^{\mu}(x-y)^{n+1-2\mu}}
\end{equation}
for $0.5< \mu< 1$ for the first derivative of the fundamental solution. 
 From the classical representations of the functions $\delta u^{\rho,k,l}$ we get a contraction property with respect to the $H^2$-norm uniformly  $\tau$. This implies that the local limit function ($k\uparrow \infty$) is H\"{o}lder, and this implies that the limit function has a representations in terms of a fundamental solution. Higher order regularity may then be obtained by differentiation shifting derivative to approximative value functions and then using Gaussian estimates and the generalized Young inequality, and the adjoint again.
\end{proof}
Now in addition we may prove that $u^{\rho,l}_i$ are differentiable with respect to $\tau$. Furthermore we need a bound on the first order coefficients $u^{\rho,l-1}_i$ which is independent of the time step number $l$. Indeed we have
\begin{lem}
 \begin{equation}
 |u^{\rho,l-1}_i(l-1,.)|_{\alpha}\leq C \mbox{ for all } 1\leq i\leq n 
\end{equation}
for a constant $C>0$ which is independent of the time step number $l\geq 1$.
\end{lem}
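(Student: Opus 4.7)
The plan is to combine the weak maximum principle for the scalar equations satisfied by the individual components $u^{\rho,l}_i$ with the interior parabolic H\"older regularity theorem of De Giorgi--Nash--Moser to obtain an $L^{\infty}$-bound and then a H\"older bound, both uniform in the time-step number $l$.

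First, upon convergence of the functional series (\ref{funci2}), the limit $u^{\rho,l}_i$ classically solves the scalar linear parabolic equation in (\ref{qparasyst2}), whose drift coefficients belong to $H^s$ for every $s$ by the previous lemma and are therefore bounded, continuous, and vanishing at spatial infinity. A Phragm\'en--Lindel\"of form of the weak maximum principle, justified by the $H^s$-decay, yields $|u^{\rho,l}_i(\tau,\cdot)|_{L^{\infty}}\leq|u^{\rho,l}_i(l-1,\cdot)|_{L^{\infty}}$ for all $\tau\in[l-1,l]$. Iterating in $l$ with the matching condition $u^{\rho,l}(l-1,\cdot)=u^{\rho,l-1}(l-1,\cdot)$ produces the $l$-uniform estimate $|u^{\rho,l-1}_i(l-1,\cdot)|_{L^{\infty}}\leq|h_i|_{L^{\infty}}=:C_0$.

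Second, I would transform back to the physical time $t=\rho_l\tau$ on each slab and reinterpret the sequence of slab problems as a single scalar Burgers problem on $[0,\infty)\times{\mathbb R}^n$ with initial data $h$; note that $\sum_m\rho_m$ diverges, so physical time covers all of $[0,\infty)$. With the drift uniformly bounded by $C_0$, the classical interior parabolic H\"older regularity theorem applied on unit-radius parabolic cylinders gives, for every physical time $t\geq 1$, the estimate $|u^{\rho,l-1}_i(l-1,\cdot)|_{\alpha}\leq C(n,\nu,C_0)$ for some $\alpha\in(0,1)$ and a constant $C$ depending only on the dimension, the viscosity, and the $L^{\infty}$-bound, but not on $l$. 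For small physical times $t<1$ the same bound follows from the smoothness and polynomial decay of the initial data together with standard short-time estimates for the fundamental solutions already exploited in the previous lemma.

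The main obstacle will be to avoid an apparent accumulation of the H\"older constant across time steps. A na\"ive slab-by-slab application of the H\"older estimate on each $[l-1,l]\times{\mathbb R}^n$ in the $\tau$-variable produces a multiplicative factor of the form $1+C\rho_l$ per step, and since $\sum_l\rho_l\sim \sum_l 1/l$ diverges, the product $\prod_l(1+C\rho_l)$ is unbounded. The remedy I would implement is not to work slab by slab in $\tau$ but to apply De Giorgi--Nash--Moser directly in the physical time $t$, where the parabolic scale is fixed and the global Burgers problem is a single scalar quasilinear equation with uniformly bounded drift. A secondary technical point is the justification of the maximum principle on the unbounded spatial domain, which is handled via the $H^s$-decay at infinity from the previous lemma.
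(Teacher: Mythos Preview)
Your approach is correct and takes a genuinely different route from the paper's. The paper's proof is a two-sentence sketch: it records that the $H^2$-norm of $u^{\rho,l}$ grows at most linearly in the time-step number $l$ (from the contraction estimates of the preceding lemma) and then asserts that the choice $\rho_l\sim 1/l$ makes the first-order coefficients uniformly bounded. Read literally, that argument controls only the \emph{rescaled} coefficients $\rho_l\,u^{\rho,l-1}_j$ appearing in the transformed equation, which is what is actually needed for the Gaussian a~priori bounds; the maximum principle is invoked only later, in the discussion following the corollary, not here. Your path---weak maximum principle for each scalar component to get the $l$-uniform $L^\infty$-bound, then De~Giorgi--Nash--Moser/Krylov--Safonov interior regularity applied in \emph{physical} time so that the parabolic scale is fixed and no slab-by-slab accumulation occurs---delivers the H\"older bound $|u^{\rho,l-1}_i(l-1,\cdot)|_\alpha\le C(n,\nu,|h|_{L^\infty})$ without the $\rho_l$ factor, i.e.\ exactly the statement as written. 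The paper's argument has the virtue of staying entirely inside the constructive $H^s$/fundamental-solution framework already set up; your argument imports an external regularity mechanism but yields the sharper conclusion and makes transparent why the bound does not deteriorate across time steps.
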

\begin{proof}Again we consider the case $n=3$
 We observe that the growth of $u^{\rho,l}$ is linear with respect to the time step number $l$. At each time step a time step size of order $\rho_l\sim \frac{1}{l}$ then ensures that the first order coefficients $u^{\rho,l-1}_i(l-1,.)$ for the equations for $u^{\rho,1,l}_i$ have a uniform bound (independent of the time step number $l$).
\end{proof}
Invoking classical regularity results for linear parabolic equations and induction over $l$ yields
 \begin{thm}\label{Burgthm}
 The Cauchy problem (\ref{qparasyst1}) on $\left[0,\infty \right)\times {\mathbb R}^n$ with initial data
 $h_i\in H^s$ for all $1\leq i\leq n$ and all $s\in {\mathbb R}$ has a global regular solution
 \begin{equation}
 u_i\in C^{\infty}\left(\left[0,\infty\right) \times {\mathbb R}^n\right) 
 \end{equation}
 along with $u(t,.)\in H^s$ for all $t\in [0,\infty)$.  
 \end{thm}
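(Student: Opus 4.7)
The plan is to run an induction over the time step number $l\geq 1$, where at each step one obtains a transformed-time solution $u^{\rho,l}_i$ on the slab $[l-1,l]\times\mathbb{R}^n$, and then to patch these local solutions together to produce a global solution on $[0,\infty)\times\mathbb{R}^n$ under the time rescaling $t=\sum_{m=1}^{l-1}\rho_m+\rho_l(\tau-(l-1))$. Globality in $t$ follows from the choice $\rho_l\sim 1/l$, since $\sum_{l\geq 1}\rho_l=\infty$ ensures that the images of the slabs exhaust $[0,\infty)$.

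For the base case $l=1$, the data $h_i\in H^s$ for every $s$ are smooth by Sobolev embedding, and the first preceding lemma yields a local solution $u^{\rho,1}_i\in H^s$ uniformly in $\tau\in[0,1]$; the second preceding lemma supplies the Hölder bound $|u^{\rho,1}_i(1,\cdot)|_\alpha\leq C$ independent of the time step number. For the inductive step, assume the solution has been constructed up to step $l-1$ with $u^{\rho,l-1}_i(l-1,\cdot)\in H^s$ for all $s$ and uniform $|\cdot|_\alpha$ bound. Applying the first lemma at step $l$ (whose first-order coefficients $u^{\rho,l-1}_j(l-1,\cdot)$ satisfy exactly the hypotheses needed) gives the local functional series limit $u^{\rho,l}_i\in H^s$ on $[l-1,l]\times\mathbb{R}^n$; the second lemma then transfers the uniform Hölder bound to the next boundary datum $u^{\rho,l}_i(l,\cdot)$, closing the induction.

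For $C^\infty$-regularity on each slab, note that under the induction hypothesis the coefficient $u^{\rho,l-1}_j(l-1,\cdot)$ lies in every $H^s$ and hence in $C^\infty$ with bounded derivatives. The linearized equation for $u^{\rho,1,l}_i$ therefore has smooth first-order coefficients and smooth initial data, so classical interior Schauder theory and parabolic bootstrap give $u^{\rho,1,l}_i\in C^\infty([l-1,l]\times\mathbb{R}^n)$; the same bootstrap carries through inductively to each correction $\delta u^{\rho,k+1,l}_i$. Since the contraction of the functional series holds in the stated function space and improves under differentiation (shifting derivatives to the kernel as in the proof of the first lemma), the series and all its spatial derivatives converge uniformly on compact sets, yielding $u^{\rho,l}_i\in C^\infty$ on the slab; the time-regularity at interior points is then obtained by differentiating the equation in $\tau$ and using the spatial regularity already in hand.

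The main obstacle is smooth matching at the discrete interfaces $t=\sum_{m\leq l}\rho_m$. Continuity across an interface is built into the scheme via the initial condition $u^{\rho,l}_i(l-1,\cdot)=u^{\rho,l-1}_i(l-1,\cdot)$. On either side of the interface the concatenated function satisfies the same Burgers equation in the original time $t$, and all $t$-derivatives at the interface are determined from the spatial derivatives through this equation; since the latter agree from both sides by the shared $C^\infty$ trace, every $t$-derivative also agrees, and the concatenation is in fact $C^\infty$ across the interface. Together with the uniform $H^s$-membership furnished by the first lemma, this produces the asserted global regular solution $u_i\in C^\infty([0,\infty)\times\mathbb{R}^n)$ with $u(t,\cdot)\in H^s$ for every $t\geq 0$ and every $s\in\mathbb{R}$.
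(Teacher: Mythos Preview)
Your proposal is correct and follows essentially the same route as the paper: induction over the time-step number $l$ using the two preceding lemmas, classical parabolic regularity/bootstrap on each slab, and the divergence of $\sum_l \rho_l$ to cover $[0,\infty)$. You add an explicit discussion of smooth matching at the interfaces that the paper leaves implicit, while the paper (in the paragraph following the theorem) places comparatively more stress on the linear $H^k$-growth estimate and the Sobolev product rule $|fg|_{H^s}\le C_s|f|_{H^s}|g|_{H^s}$ for $s>n/2$ to obtain time-differentiability; these are complementary emphases of the same argument.
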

This is not a new result but the proof is more elementary compared to an alternative method which establishes first the estimate
\begin{equation}\label{aprioriest}
\frac{\partial}{\partial t}\|u(t,.)\|_{H^s}\leq \|u(t,.)\|_{H^{s+1}}\sum_{i,j}\sum_{|\alpha|+|\beta|\leq s}\|D^{\alpha}u_iD^{\beta}u_j\|_{L^2}-2\|\nabla u\|^2_{H^s}.
\end{equation}
At first glance from the construction it seems that the estimate for the  solution function $\mathbf{u}$ increases linearly on a time scale $\sum_{k=1}^l\rho_k$ with $\rho_k\sim \frac{1}{k}$, and, hence, quadratically on an uniform time scale. However, at each time step, if we have constructed $u^{\rho,l}_i(\tau,.)\in H^s$ for $1\leq i\leq n$ uniformly in $\tau$ with $\tau\in [l-1,l]$, then the construction gives
\begin{equation}
|u^{\rho,l}_i(\tau,.)|_{H^k}\leq |u^{\rho,l-1}_i(\tau,.)|_{H^k}+C_k
\end{equation}
with a constant $C_k$ depending on $k$ according to the level of differentiability we want to obtain (accordingly the stepsize $\rho_l$ has to be chosen for each given $k$). However, the growth of size $C_k$ at each time step can be compensated by a choice $\rho_l\sim \frac{1}{l C_k}$. Furthermore, we can easily deduce the existence of classical solutions $u^{\rho,l}$ of (\ref{qparasyst2}) at each time step. First we choose $k>m+\frac{1}{2}n$ in the construction above and obtain spatial differentiability of order $m$ uniformly with respect to $\tau$. Then the product estimate
\begin{equation}\label{prule}
 |fg|_{H^s}\leq C_s|f|_{H_s}|g|_{H_s}~\mbox{ for }~s>\frac{1}{2}n
\end{equation}
 gives
\begin{equation}
\frac{\partial u^{\rho,l}_i}{\partial \tau}=\rho_l\nu\sum_{j=1}^n \frac{\partial^2 u^{\rho,l}_i}{\partial x_j^2} 
-\rho_l\sum_{j=1}^n u^{\rho,l}_j\frac{\partial u^{\rho,l}_i}{\partial x_j}\in H^{k-2}.
\end{equation}
 Hence for choice $k>2+\frac{1}{2}n$ we have differentiability of $u^{\rho,l}_i$ with respect to time. The existence of a fundamental solution of the equation
 \begin{equation}
 \frac{\partial p^{\rho,l}}{\partial \tau}=\rho_l\nu\sum_{j=1}^n \frac{\partial^2 p^{\rho,l}}{\partial x_j^2} 
-\rho_l\sum_{j=1}^n u^{\rho,l}_j\frac{\partial p^{\rho,l}}{\partial x_j}
 \end{equation}
 is ensured in terms of the known functions $u^{\rho,l}_j$ which are indeed now known to be H\"{o}lder continuous with respect to the time argument $\tau$ and the spatial argument $x$. Hence we have classical representations of the solution and an ordinary maximum principle for linear parabolic equations tells us that the maximum of the solution function $u^{\rho,l}$ does not increase over the time  interval $[l-1,l]$. As the step size compensates the linear growth of the estimate immediately linked to the original construction this holds independently of the time step number $l$. We note
 \begin{cor}(Same assumptions as in theorem \ref{Burgthm} above).
 
 The solution $\mathbf{u}$ of (\ref{qparasyst1}) is bounded. 
 \end{cor}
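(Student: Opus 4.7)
The plan is to regard each component of the scheme's solution, at each time step, as satisfying a scalar linear parabolic equation with Hölder-continuous drift and no zeroth-order term, and then apply a weak maximum principle. Concretely, choose the regularity parameter $k>2+\tfrac{1}{2}n$ in the construction above, so that the velocity components $u^{\rho,l}_j$ are Hölder continuous in $(\tau,x)$ by Sobolev embedding. Each $u^{\rho,l}_i$ then solves
\[
\frac{\partial u^{\rho,l}_i}{\partial\tau}-\rho_l\nu\sum_{j=1}^n\frac{\partial^2 u^{\rho,l}_i}{\partial x_j^2}+\rho_l\sum_{j=1}^n u^{\rho,l}_j\frac{\partial u^{\rho,l}_i}{\partial x_j}=0
\]
on $[l-1,l]\times{\mathbb R}^n$, which is a genuine linear scalar parabolic equation with known Hölder coefficients $\rho_l u^{\rho,l}_j$.

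Next I would invoke the classical weak maximum principle for such equations. The only subtlety is that the spatial domain is unbounded; however, since $u^{\rho,l}_i(\tau,\cdot)\in H^s$ with $s>\tfrac{1}{2}n$, Sobolev embedding yields $u^{\rho,l}_i(\tau,\cdot)\in C_0({\mathbb R}^n)$, so the supremum over the spatial variables is attained in a compact set uniformly in $\tau$. Applying the maximum principle on a ball of radius $R$ and letting $R\uparrow\infty$ gives
\[
\sup_{[l-1,l]\times{\mathbb R}^n}\left| u^{\rho,l}_i(\tau,x)\right| \leq \sup_{{\mathbb R}^n}\left| u^{\rho,l}_i(l-1,\cdot)\right|.
\]
Using the matching condition $u^{\rho,l}(l-1,\cdot)=u^{\rho,l-1}(l-1,\cdot)$ and iterating in $l$, a telescoping inequality yields $\left|u^{\rho,l}_i(\tau,\cdot)\right|_{L^{\infty}}\leq\left|h_i\right|_{L^{\infty}}$ for every $l\geq 1$ and every $\tau\in[l-1,l]$. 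Since the global solution $u_i$ from Theorem \ref{Burgthm} is assembled from the pieces $u^{\rho,l}_i$ through the time transformation $t=\rho_l\tau$, this translates to $\left|u_i(t,\cdot)\right|_{L^{\infty}}\leq\left|h_i\right|_{L^{\infty}}$ for every $t\in[0,\infty)$.

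Finally, the initial bound $\left|h_i\right|_{L^{\infty}}<\infty$ is immediate from the Sobolev embedding $H^s\hookrightarrow L^{\infty}$ for any $s>\tfrac{1}{2}n$, which is covered by the standing hypothesis $h_i\in H^s$ for all $s\in{\mathbb R}$. Combining these observations produces a uniform bound on $\mathbf{u}$ over $[0,\infty)\times{\mathbb R}^n$.

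The main obstacle is justifying the weak maximum principle on the unbounded domain ${\mathbb R}^n$: one must rule out that the supremum of $u^{\rho,l}_i$ is ``attained at infinity'' rather than on the initial time slice. The $H^s$-regularity of the approximation with $s$ large enough disposes of this obstacle cheaply by furnishing decay at spatial infinity, so that the standard argument (truncate to a large ball, apply the maximum principle, pass to the limit) goes through. Beyond this, the proof is essentially bookkeeping, and no zeroth-order term appears in the equation, so no exponential factors enter the bound.
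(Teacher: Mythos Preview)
Your proposal is correct and follows essentially the same approach as the paper: regard each $u^{\rho,l}_i$ as the solution of a scalar linear parabolic equation with H\"older first-order coefficients and apply the maximum principle at each time step to conclude that the supremum does not increase, then iterate in $l$. You are in fact more careful than the paper in addressing the unbounded-domain issue via the decay coming from $H^s$ with $s>\tfrac{n}{2}$, whereas the paper simply invokes ``an ordinary maximum principle for linear parabolic equations'' without further comment.
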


 \section{Extension of the scheme to the incompressible Navier Stokes equation}
Next we look at the relation to the incompressible Navier-Stokes equation. We write it in the same scheme frame as above with time step number $l\geq 1$ and time step size $\rho_l\sim \frac{1}{l}$ at each time step number $l$. The velocity is denoted by $\mathbf{v}=(v_1,\cdots ,v_n)^T$ (recall that $T$ is for 'transposed'). We have an additional scalar function $p$, and in time-transformed coordinates $\tau=\rho_l t$ the equation system scheme becomes 
\begin{equation}\label{qparasystnav1}
\left\lbrace \begin{array}{ll}
\frac{\partial v^{\rho,l}_i}{\partial \tau}=\rho_l\nu\sum_{j=1}^n \frac{\partial^2 u^{\rho,l}_i}{\partial x_j^2} 
-\rho_l\sum_{j=1}^n v^{\rho,l}_j\frac{\partial v^{\rho,l}_i}{\partial x_j}-\rho_l\frac{\partial p^{\rho,l}}{\partial x_i},\\
\\
\div \mathbf{v}^{\rho,l}=0
\\
\\
\mathbf{v}^{\rho,l}(l-1,.)=\mathbf{v}^{\rho,l-1}(l-1,.),
\end{array}\right.
\end{equation}  
where for each $l\geq 1$ we consider this system on $[l-1,l]\times {\mathbb R}^n$. 
For $l=1$ the initial conditions are $\mathbf{v}^{\rho,1}(0,.)=\mathbf{v}^{\rho,l-1}(l-1,.)=\mathbf{h}(.)$ with the same $\mathbf{h}\in H^s\left({\mathbb R}^n\right) $ for all $s\in {\mathbb R}$ as in the preceding section.
The Leray projection form of these equations is obtained time-step by time-step by elimination of $p^{\rho,l}$ via an equation for $\div \mathbf{v}^{\rho,l}$ which simplifies by incompressibility (at each time step $l\geq 1$). First, Leray projection leads to the Poisson equation for $p^{\rho,l}(\tau,.)$ for $\tau\in [l-1,l]$, i.e.,
\begin{equation}
\Delta p^{\rho,l}=-\sum_{i,j=1}^n \frac{\partial v^{\rho,l}_i}{\partial x_j}\frac{\partial v^{\rho,l}_j}{\partial x_i},
\end{equation}
where we suppress the notation of the evaluation at $\tau$ ($\tau$ serves as a parameter). Hence
at each time step we have the local equation in Leray projection form of the incompressible Navier-Stokes equation. If
\begin{equation}
\sum_{i,j=1}^n \left( \frac{\partial v^{\rho,l}_i}{\partial x_j}\frac{\partial v^{\rho,l}_j}{\partial x_i}\right)(\tau,.)
\end{equation}
is in $C^1\left({\mathbb R}^n\right) \cap L^1\left({\mathbb R}^n\right)$ for all $\tau\in [l-1,l]$, then this is justified  by a well-known result. Indeed we have
\begin{prop}\label{stprop}
Assume that $n\geq 3$ and that $f\in C^1\left({\mathbb R}^n\right) \cap L^1\left({\mathbb R}^n\right)$ and let $K$ be the fundamental solution of the Laplacian, i.e., of the equation
\begin{equation}
\Delta K=\delta,
\end{equation}
where $\delta$ denotes the Dirac distribution. Then for $n\geq 3$
\begin{equation}
K(x)=-\frac{|x|^{2-n}}{(2-n)\omega_n}
\end{equation}
(with $\omega_n$ being the surface area of the unit sphere) defines a distributive solution and determines a classical solution $w\in C^2$ defined by
\begin{equation}
w(x):=\int_{{\mathbb R}^n}f(y)K(x-y)dy
\end{equation}
of the Poisson equation
\begin{equation}
\Delta w=f.
\end{equation}
Moreover, the gradient
\begin{equation}
\nabla w(x):=\int_{{\mathbb R}^n}f(y)\nabla K(x-y)dy
\end{equation}
is well-defined.
\end{prop}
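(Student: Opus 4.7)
The plan is to verify the proposition in three steps: well-definedness of $w$, the gradient representation, and the identity $\Delta w=f$ classically. Throughout I would use the splitting of $K$ into a singular local part and a bounded decaying tail, exactly as advertised in the introduction. Fix a cutoff $\phi\in C^\infty_c(\mathbb{R}^n)$ with $\phi\equiv 1$ on $B_1(0)$ and $\mathrm{supp}\,\phi\subset B_2(0)$. Since $|K(x)|=|x|^{2-n}/((n-2)\omega_n)$, polar coordinates give
\begin{equation}
\int_{B_2(0)}|K(x)|\,dx\leq C\int_0^2 r^{2-n}r^{n-1}\,dr = 2C<\infty,
\end{equation}
so $\phi K\in L^1$, while $(1-\phi)K$ is bounded on $\mathbb{R}^n$ and decays at infinity because $|x|^{2-n}\leq 1$ for $|x|\geq 1$. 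Applying the generalized Young inequality (\ref{Y1})--(\ref{Y2}) separately to $f\ast(\phi K)$ (with the pairing $L^1\ast L^1\to L^1$) and to $f\ast((1-\phi)K)$ (with $L^1\ast L^\infty\to L^\infty$) shows that $w$ is defined pointwise everywhere and lies in $L^1+L^\infty$.

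For the gradient, the explicit formula $\nabla K(x)=\omega_n^{-1}x|x|^{-n}$ gives $|\nabla K(x)|\leq C|x|^{1-n}$, which is locally integrable since $\int_0^R r^{1-n}r^{n-1}\,dr=R$; the same splitting shows $\phi\nabla K\in L^1$ and $(1-\phi)\nabla K\in L^\infty$, so the candidate integral $\int f(y)\nabla K(x-y)\,dy$ is well defined. To identify this with $\nabla w$ I would examine the difference quotient
\begin{equation}
\frac{w(x+he_i)-w(x)}{h}=\int_{\mathbb{R}^n} f(y)\,\frac{K(x+he_i-y)-K(x-y)}{h}\,dy,
\end{equation}
split the $y$-domain into $\{|x-y|\geq 2|h|\}$ and $\{|x-y|<2|h|\}$, and pass to the limit by dominated convergence: the first region converges uniformly to $\partial_i K(x-y)$, while the second is bounded by $\bigl(\sup_{B_{3|h|}(x)}|f|\bigr)\int_{B_{3|h|}(0)}|\nabla K|\,dz\to 0$ using continuity of $f$ (from $f\in C^1$) and local integrability of $\nabla K$.

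The identity $\Delta w=f$ is then most cleanly shown distributionally and promoted to a classical statement via elliptic regularity. For any test function $\varphi\in C^\infty_c(\mathbb{R}^n)$,
\begin{equation}
\int_{\mathbb{R}^n} w(x)\Delta\varphi(x)\,dx=\int_{\mathbb{R}^n} f(y)\left(\int_{\mathbb{R}^n} K(x-y)\Delta\varphi(x)\,dx\right)dy=\int_{\mathbb{R}^n} f(y)\varphi(y)\,dy,
\end{equation}
where the Fubini exchange is justified because $\Delta\varphi$ has compact support, $|K|$ is locally integrable, and $f\in L^1$, and the inner integral equals $\varphi(y)$ because $K$ is the fundamental solution of $\Delta$. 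Thus $\Delta w=f$ holds in the sense of distributions; since $f\in C^1\subset C^{0,\alpha}_{\mathrm{loc}}$ for any $\alpha\in(0,1)$, interior Schauder regularity upgrades $w$ to $C^{2,\alpha}_{\mathrm{loc}}$ and the identity to a classical pointwise one.

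The main obstacle is the verification that $K$ is indeed the fundamental solution, i.e., that the inner integral $\int K(x-y)\Delta\varphi(x)\,dx$ equals $\varphi(y)$. This rests on the classical excision-and-Green's-identity computation on $\mathbb{R}^n\setminus B_\epsilon(y)$, where $\Delta K=0$ pointwise away from the singularity; the two boundary integrals on $\partial B_\epsilon(y)$ involve $\nabla K\cdot\nu$ and $K$ respectively, and in the limit $\epsilon\to 0$ they conspire to produce exactly $\varphi(y)$, which is precisely the purpose of the normalizing constant $(2-n)\omega_n$ in the definition of $K$. Everything else in the argument reduces to dominated convergence, the Young inequality on the split kernel, and a single invocation of interior Schauder theory.
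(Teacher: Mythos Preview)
Your proof is correct and follows a standard route: local integrability of $K$ and $\nabla K$ for well-definedness, a difference-quotient argument for the gradient formula, and the distributional identity $\Delta w=f$ upgraded via interior Schauder theory. The only place one might ask for a bit more care is the dominated-convergence step for the difference quotient on the region $\{|x-y|\geq 2|h|\}$: ``converges uniformly'' is not quite the right phrase, but the mean-value bound $|K(x+he_i-y)-K(x-y)|/|h|\leq C|x-y|^{1-n}$ together with the same near/far splitting against $f$ gives an integrable majorant, so the argument goes through.

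As for the comparison: the paper does \emph{not} prove this proposition. It is introduced with the phrase ``this is justified by a well-known result'' and stated without proof; the surrounding discussion only explains why the hypothesis $f\in C^1\cap L^1$ is met in the scheme (via the pointwise inequality $|v_{i,j}v_{j,i}|\leq\tfrac{1}{2}(v_{i,j}^2+v_{j,i}^2)$) and then moves on to the more refined Lemma~\ref{poisson}, which is the result the paper actually needs and does prove. So there is nothing to compare approaches against---you have supplied a proof where the paper simply cites the fact as classical.
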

Note that  in our scheme $L^2$-theory for the data is enough in order to use this proposition \ref{stprop} since
\begin{equation}\label{prodv}
{\Big |}\frac{\partial v^{\rho,l}_i}{\partial x_j}(\tau,.)\frac{\partial v^{\rho,l}_j}{\partial x_i}(\tau,.){\Big |}\leq \frac{1}{2}\left( \frac{\partial v^{\rho,l}_i}{\partial x_j}\right)^2(\tau,.)+\frac{1}{2}\left( \frac{\partial v^{\rho,l}_j}{\partial x_i}\right)^2(\tau,.)
\end{equation}
for any $\tau \in [l-1,l]$. This means that $v^{\rho,l}_i\in H^1$ implies that the left side of (\ref{prodv}) is in $L^1$. Outside a ball we may then use that the first derivatives of the kernel $K_{,i}$ are in $L^2$ in order to estimate pressure terms. The truncation of the kernel inside a ball convoluted with the data is easier to handle. Indeed since the data are in our scheme at time step $l\geq 1$, i.e.,  $v^{\rho,l}_j(l-1,.)$, are in $H^2\cap C^2$, and we have
  \begin{equation}\label{prodv2}
\sum_{i,j=1}^n{\Big |}\frac{\partial v^{\rho,l}_i}{\partial x_j}(l-1,.)\frac{\partial v^{\rho,l}_j}{\partial x_i}(l-1,.){\Big |}
\leq nC\sum_{j=1}^n{\Big |}\frac{\partial v^{\rho,l}_j}{\partial x_i}(l-1,.){\Big |}.
\end{equation}
We may then use that the right side of (\ref{prodv2}) is in $L^2$ and together with the fact that the truncated kernel $1_{B_r(0)}K_{,i}$ is in $L^1$ the generalized Young inequality leads to the conclusion that the convolution with truncated kernel is also in $L^2$. Similar for first derivatives.
Before we start to go into details, let us outline the program of this paper. It makes sense to consider the Leray projection term first and its properties. This term makes the difference to the multivariate Burgers equation, and approximations appear in our local iterative scheme of course. We first consider $H^k$-estimates for the Leray projection term. Then we consider time-local contraction results for the uncontrolled scheme with approximations $v^{\rho,k,l}_i$ and with respect to a $L^{\infty}\times H^2$-norm. In addition we consider time-local contractions with respect to a $L^{\infty}\times H^{2,\infty}$ norm. It makes sense to consider both local contraction results together since this simplifies the global estimates for the controlled scheme. This time-local contraction results imply local existence, and this estimates can be repeated for an extended controlled scheme. We shall do this and observe that for a certain choice of the control functions $r^l_i$ at each time step $l$ we get a global linear bound of the Leray projection term. This is the technical abstract. We next go into the details.    
  Next we extend the standard result proposition \ref{stprop} in order to meet our purposes. It is useful to have the gradient of the pressure in $L^2$ (resp. in $H^1$) since estimates in terms of convolutions of the modulus of the gradient with a Gaussian a priori estimate  help us to conclude that the convolution bound is itself in $L^2$ (resp. in $H^1$). Note that the relationships in (\ref{Y1}) and (\ref{Y2}) make $L^2$-estimates for the gradient of the pressure useful in order to get $L^2$-estimates for the approximations $v^{\rho,k,l}_i$ of our scheme and their (weak) derivatives up to second order. Furthermore, if we want $L^{\infty}$-estimates for approximations $v^{\rho,k,l}_i$ and their (weak) derivatives up to second order, then $L^2$- estimates for the gradient of the pressure are also useful since the Gaussian is in $L^2$.  The next lemma is rather simple bt we consider this in more detail because we use these simple observations in a more complex situation below.
\begin{lem}\label{poisson}
Let $n=3$.
For $k\geq 1$ assume that $f_i,g_j\in H^{k+1}\cap C^{k+1}$ for $1\leq i\leq n$, and define for all $x\in {\mathbb R}^n$
\begin{equation}\label{nablap}
\frac{\partial}{\partial x_i}p^{f,g}(x)=\int_{{\mathbb R}^n}\sum_{i,j=1}^n \left( \frac{\partial f_i}{\partial x_j}\frac{\partial g_j}{\partial x_i}\right) (y) \frac{\partial}{\partial x_i}K(x-y)dy.
\end{equation}
Then we have $\frac{\partial}{\partial x_i}p^{f,g}\in H^{k+1}\cap C^{k+1}$ and  $p^{f,g}\in H^{k+2}\cap C^{k+2}$ (defined accordingly). We have the estimate
\begin{equation}
{\Big |}\frac{\partial}{\partial x_i}p^{f,g}{\Big |}_{H^{k+1}}\leq  C\max_{i,j\in \left\lbrace 1,\cdots ,n\right\rbrace} \left( |f_i|_{H^{k}}+|g_j|_{H^{k}}\right)
\end{equation}
for some constant $C>0$ which depends on the bounded first order derivatives of either $g_k$ or $f_j$. 
\end{lem}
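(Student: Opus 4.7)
The plan is to represent $\nabla p^{f,g}=-(\nabla K)\ast S$ for the source $S:=\sum_{i,j=1}^n f_{i,j}g_{j,i}$, to split $\nabla K$ into a compactly supported $L^1$-piece near the origin and an $L^2$-piece away from the origin (as already announced in the introduction), and then to reduce the Sobolev estimate to a Moser-type product estimate on derivatives of $S$.

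First I would fix a cutoff $\phi\in C_c^{\infty}({\mathbb R}^n)$ with $\phi\equiv 1$ on $B_{1/2}(0)$ and $\mathrm{supp}\,\phi\subset B_1(0)$, and decompose $\nabla K=\phi\nabla K+(1-\phi)\nabla K$. Using $|\nabla K(x)|\leq C|x|^{1-n}$, spherical integration in $n=3$ gives $\phi\nabla K\in L^1({\mathbb R}^n)$ (because $1-n>-n$) and $(1-\phi)\nabla K\in L^2({\mathbb R}^n)$ (because $2(1-n)<-n$ for $n>2$). For a multi-index $\alpha$ with $|\alpha|\leq k+1$, differentiating the convolution and pushing derivatives onto the source gives $\partial^\alpha[(\nabla K)\ast S]=(\nabla K)\ast(\partial^\alpha S)$, and applying the generalized Young inequality (\ref{Y3}) to each piece of the split yields
\begin{equation*}
|\partial^\alpha \nabla p^{f,g}|_{L^2}\leq |\phi\nabla K|_{L^1}\,|\partial^\alpha S|_{L^2}+|(1-\phi)\nabla K|_{L^2}\,|\partial^\alpha S|_{L^1}.
\end{equation*}
The task therefore reduces to a uniform bound on $\partial^\alpha S$ in $L^1\cap L^2$ for $|\alpha|\leq k+1$.

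For the $L^1$-part, expanding $\partial^\alpha(f_{i,j}g_{j,i})$ by Leibniz and estimating each resulting product of derivatives by Cauchy--Schwarz gives a bound by a product of Sobolev norms of $f_i$ and $g_j$. For the $L^2$-part I would invoke the Moser product estimate $|uv|_{H^m}\leq C(|u|_{L^{\infty}}|v|_{H^m}+|v|_{L^{\infty}}|u|_{H^m})$ applied to the factors $f_{i,j}$ and $g_{j,i}$, together with the $L^{\infty}$-bound on the first-order derivatives of one of the two families, which is absorbed into the constant $C$ of the lemma. This yields $|\partial^\alpha S|_{L^2}\leq C\max_{i,j}(|f_i|_{H^{k+1}}+|g_j|_{H^{k+1}})$, and summing over $|\alpha|\leq k+1$ gives the stated $H^{k+1}$-estimate for $\nabla p^{f,g}$. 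The regularity assertions $\nabla p^{f,g}\in C^{k+1}$ and $p^{f,g}\in H^{k+2}\cap C^{k+2}$ then follow from $-\Delta p^{f,g}=S$ by standard elliptic regularity, and pointwise from differentiation under the integral sign using the same kernel split and the hypothesis $f_i,g_j\in C^{k+1}$.

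The main obstacle will be the $L^2$-estimate of $\partial^\alpha S$ at top order $|\alpha|=k+1$: a naive Leibniz term wants to place $k+2$ derivatives on $f_i$ or $g_j$, exceeding the assumed $H^{k+1}$-regularity. The remedy is to transfer one derivative back onto the outer (non-singular) part of the kernel via integration by parts, so that the offending term becomes $((1-\phi)\partial_j\nabla K)\ast\partial^{\alpha-e_j}S$ with one fewer derivative on the source; the new kernel $(1-\phi)\partial_j\nabla K$ is still in $L^2$ thanks to (\ref{pk2}) for $n\geq 3$. Equivalently one may use a commutator form of the Moser estimate. Either route absorbs the top-order contribution into a constant depending only on $L^{\infty}$-bounds of first-order derivatives of one of the families $\{f_i\}$ or $\{g_j\}$, matching the form of the constant in the statement of the lemma.
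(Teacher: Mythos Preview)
Your approach is essentially the same as the paper's: both split $\nabla K$ into a compactly supported $L^1$-piece and an $L^2$-tail via a smooth cutoff, then apply the generalized Young inequality to each piece, controlling the source $S=\sum_{i,j}f_{i,j}g_{j,i}$ in $L^1$ via the elementary pointwise bound $|ab|\le\tfrac{1}{2}(a^2+b^2)$ (your Cauchy--Schwarz) and in $L^2$ by pulling out an $L^{\infty}$-bound on one factor's first derivatives (your Moser estimate). The paper only writes out the case $k=1$ and waves at higher $k$ by ``multivariate differentiation of the equivalent expression,'' whereas you explicitly address the top-order obstacle at $|\alpha|=k+1$ by integrating one derivative back onto $(1-\phi)\nabla K$ (using that second-order derivatives of $K$ are still in $L^2$ away from the origin); this is a genuine addition beyond what the paper spells out, but it is in the same spirit and leads to the same constant structure.
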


\begin{rem}
Note that for a constant $C>0$ independent of $f_j$ and $g_k$ we have the estimate
\begin{equation}
{\Big |}\frac{\partial}{\partial x_i}p^{f,g}{\Big |}_{H^{k+1}}\leq  C\max_{i,j\in \left\lbrace 1,\cdots ,n\right\rbrace}\left( |f_i|_{H^{k}}+|g_j|_{H^{k}}+
|f_i|_{H^{k}}|g_j|_{H^{k}}\right)
\end{equation}
\end{rem}

\begin{proof}
We consider the case $f,g\in H^2\cap C^2$ and $k=1$ which is essential. For $k>0$ an analougous argument is obtained by multivariate differentiation of the equivalent expression 
\begin{equation}\label{nablapeq}
\frac{\partial}{\partial x_i}p^{f,g}(x)=\int_{{\mathbb R}^n}\sum_{i,j=1}^n \left( \frac{\partial f_i}{\partial x_j}\frac{\partial g_j}{\partial x_i}\right) (x-y) \frac{\partial}{\partial x_i}K(y)dy.
\end{equation} 
 Note that the second derivatives of $f$ and $g$ are in $L^2\cap C$, and therefore in the closed space of continuous functions vanishing at infinity. Hence they are bounded.
The basic idea is is the following. The representation of the gradient of the pressure is a convolution of a sum of products
\begin{equation}\label{fgproduct}
 \sum_{i,j=1}^n  \frac{\partial f_i}{\partial x_j}\frac{\partial g_j}{\partial x_i}
\end{equation}
with the gradient of the kernel $K$.
Since for all $x\in {\mathbb R}^n$ we have
\begin{equation}\label{prodfg}
\begin{array}{ll}
\sum_{i,j=1}^n {\Big |} \frac{\partial f_i}{\partial x_j}\frac{\partial g_j}{\partial x_i}{\Big |}(x)
\leq\sum_{j,k=1}^n \frac{1}{2}\left( \frac{\partial f_j}{\partial x_k}\right)^2(x)+
\frac{1}{2}\left( \frac{\partial g_k}{\partial x_j}\right)^2 (x),
\end{array}
\end{equation}
it follows that the (sum of) product(s) function in (\ref{fgproduct}) is in $H^{1,1}$ (the Sobolev space of functions in $L^1$ with weal derivatives in $L^1$). Accordingly the first order derivatives are in $L^1$ (since $f_j$ and $g_k$ are in $H^2$).  In order to have the gradient of the pressure in $H^1$ the next idea is to split up the kernel $K$ (or a first order derivative of it) into two summands writing the gradient of the pressure functional as
\begin{equation}\label{nablap1}
\begin{array}{ll}
\frac{\partial}{\partial x_i}p^{f,g}(x)=\int_{{\mathbb R}^n}\sum_{j,k=1}^n \left( \frac{\partial f_j}{\partial x_k}\frac{\partial g_k}{\partial x_j}\right) (x-y) K_{,i}(y)dy\\
\\
=\int_{{\mathbb R}^n}\sum_{j,k=1}^n \left( \frac{\partial f_j}{\partial x_k}\frac{\partial g_k}{\partial x_j}\right) (x-y) \phi_{B_r}(y)K_{,i}(y)dy\\
\\
+\int_{{\mathbb R}^n}\sum_{j,k=1}^n \left( \frac{\partial f_j}{\partial x_k}\frac{\partial g_k}{\partial x_j}\right) (x-y) (1-\phi_{B_r})(y)K_{,i}(y)dy,
\end{array}
\end{equation}
along with a smooth function supported on a ball of radius $r>0$ around $B_r(0)$ and zero elsewhere. Recall the Einstein notation where $K_{,i}$ denotes the partial $i$th derivative of the kernel function $K$. 
Note that for $n=3$ we have
\begin{equation}\label{delocK}
y\rightarrow (1-\phi_{B_r})(y)K_{,i}(y)\in L^2
\end{equation}
and 
\begin{equation}\label{locK}
y\rightarrow \phi_{B_r}(y)K_{,i}(y)\in L^1.
\end{equation}
Here for (\ref{delocK}) observe that the function $(1-\phi_{B_r})K_{,i}$ is square integrable, since for $n=3$ we have
\begin{equation}
\int_{{\mathbb R}^n\setminus B_r(0)}(1-\phi_{B_r}(x))\frac{x_i^2}{|x|^{2n}}dx\sim \int_{s\geq r}\frac{s^2}{s^6}s^2ds<\infty.
\end{equation}
For (\ref{locK}) observe that for $n=3$
\begin{equation}
K_{,i}(y)\sim \frac{y_i}{|y|^n}\sim \frac{r}{r^3}
\end{equation}
is locally integrable (but not square integrable).
The function $\phi_{B_r}(x))$ may be specified in the form
\begin{equation}
\phi_{\epsilon}(y)=\left\lbrace \begin{array}{ll}
         1 \mbox{ if } |y|\leq \epsilon ,\\
	 \\
	 \exp\left(-\frac{1}{(2\epsilon^2-|y|^2)} \right) \mbox{ if } \epsilon^2\leq |y|^2\leq 2\epsilon^2,\\
	 \\
	 0 \mbox{ else}.
        \end{array}\right.
\end{equation}
For the second term on the right side of (\ref{nablap1}) we have the inequality
\begin{equation} 
\begin{array}{ll}
\int_{{\mathbb R}^n}\sum_{j,k=1}^n \left( \frac{\partial f_j}{\partial x_k}\frac{\partial g_k}{\partial x_j}\right) (x-y) (1-\phi_{B_r})(y)K_{,i}(y)dy\\
\\
\leq \int_{{\mathbb R}^n}\sum_{j,k=1}^n \frac{1}{2}\left( {\Big |} \frac{\partial f_j}{\partial x_k}{\Big |}^2+{\Big |}\frac{\partial g_k}{\partial x_j}{\Big |}^2\right)  (x-y) (1-\phi_{B_r})(y)K_{,i}(y)dy\\
\\
\leq C\sum_{j=1}^n\left( |f_j|_{H^1}+ |g_j|_{H^1}\right) 
\end{array}
\end{equation}
where we use the Young inequality.
Next we consider the first term on the right side of (\ref{nablap1}). Since
$f_j\in C^1\cap H^1$ the first order derivatives of $f_j, 1\leq j\leq n$ are bounded by a constant $C>0$, hence we have
\begin{equation}\label{firstterm}
\begin{array}{ll}
 \int_{{\mathbb R}^n}\sum_{j,k=1}^n \left( \frac{\partial f_j}{\partial x_k}\frac{\partial g_k}{\partial x_j}\right) (x-y) \phi_{B_r}(y)K_{,i}(y)dy\\
\\
\leq \int_{{\mathbb R}^n}\sum_{j,k=1}^n C{\Big |}\frac{\partial g_k}{\partial x_j}{\Big |} (x-y) \phi_{B_r}(y)K_{,i}(y)dy
\end{array}
\end{equation}
Since $g_k\in H^1$ we have $g_{k,j}\in L^2$ and we may use the Young inequality along with $\phi_{B_r}(.)K_{,i}(.)\in L^1$ in order to obtain for a generic $C>0$ that the right side of (\ref{firstterm}) has the upper bound
\begin{equation}\label{ineqf}
C\sum_{k=1}^n|g_k|_{H^1}
\end{equation}
for some generic constant $C>0$. 
Symmetrically one has the upper bound
\begin{equation}\label{ineqg}
C\sum_{j=1}^n|f_j|_{H^1},
\end{equation}
of course. Note that the constant $C$ in (\ref{ineqf}) is proportional to $\max_{j\in \left\lbrace 1,\cdots ,n\right\rbrace }|f_j|_{H^1}$ while the constant in (\ref{ineqg}) is essentially proportional to  $\max_{j\in \left\lbrace 1,\cdots ,n\right\rbrace }|g_j|_{H^1}$. Hence we get the upper bound
\begin{equation}
C_0\left( \max_{j\in \left\lbrace 1,\cdots ,n\right\rbrace }|g_j|_{H^1}\max_{k\in \left\lbrace 1,\cdots ,n\right\rbrace }|f_k|_{H^1}\right) 
\end{equation}
for some constant $C_0$ which depends only on the dimansion and the Laplacian kernel.

\end{proof}

We need also standard Gaussian estimate for the fundamental solution and its first derivatives. We have
\begin{lem}\label{lemgauss}
Let $D:=[T_0,T_1]\times \overline{\Omega}\subseteq {\mathbb R}^n$ be a domain along with $T_1>T_0>0$, and let 
\begin{equation}\label{l}
Lp\equiv \frac{\partial p}{\partial t}-\sum_{i,j=1}^na_{ij}\frac{\partial^2 p}{\partial x_i\partial x_j}+\sum_{i=1}^nb_i\frac{\partial p}{\partial x_i}=0
\end{equation}
be an equation which satisfies
\begin{itemize}
 \item[i)] $L$ is uniformly parabolic on the whole of $[T_0,T_1]\times \Omega$,
 \item[ii)] the coefficient functions $a_{ij}$ are uniformly H\"{o}lder continuous with H\"{o}lder constant $\alpha/2\in (0,0.5)$ with respect to time and H\"{o}lder constant $\alpha\in (0,1)$ with respect to the spatial variables, i.e., $a_{ij}\in C^{\alpha/2,\alpha}\left(D\right)$,
 \item[iii)] the coefficient functions are H\"{o}lder continuous with H\"{o}lder constant $\alpha\in (0,1)$ and uniformly with respect to time $t$.
\end{itemize}
Then a fundamental solution $p$ of equation (\ref{l}) exists and satisfies the Gaussian a priori 
estimates
\begin{equation}\label{apriori}
|p(t,x;s,y)|\leq \frac{C}{(t-s)^{n/2}}\exp\left(-\frac{\lambda(x-y)^2}{4(t-s)} \right),
\end{equation}
and
\begin{equation}\label{apriorider}
{\Bigg|}\frac{\partial}{\partial x_i}p(t,x;s,y){\Bigg|}\leq \frac{C}{(t-s)^{(n+1)/2}}\exp\left(-\frac{\lambda(x-y)^2}{4(t-s)} \right),
\end{equation}
for some constants $C>0$ and $\lambda>0$ ($\lambda$ less or equal to the lower ellipticity constant in general). Note that for $t>s$ these a priori bounds (as functions of $x-y$) with  have a uniform bound in $L^1$. 
\end{lem}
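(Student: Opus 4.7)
The plan is to apply the classical Levi parametrix construction, as developed in Friedman and in Ladyzhenskaya--Solonnikov--Uraltseva. The starting point is to freeze the principal coefficients at a parameter point $(s,y)$, which yields a constant-coefficient parabolic operator $L_0$ whose fundamental solution $Z(t,x;s,y)$ is an explicit Gaussian. By direct computation $Z$ satisfies estimates of the form (\ref{apriori}) and (\ref{apriorider}), with $\lambda$ equal to the reciprocal of the frozen ellipticity constant. I would then seek the true fundamental solution in the parametrix form
\begin{equation*}
p(t,x;s,y) = Z(t,x;s,y) + \int_s^t \int_{\mathbb{R}^n} Z(t,x;\sigma,\eta)\,\Phi(\sigma,\eta;s,y)\,d\eta\,d\sigma,
\end{equation*}
and derive the Volterra integral equation $\Phi = LZ + LZ \ast \Phi$ (parabolic convolution) that $\Phi$ must satisfy in order that $L p = 0$ and $p(s,\cdot;s,y) = \delta_y$.

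The main analytical step would be to show that the Neumann series $\Phi = \sum_{k\geq 1}(LZ)^{\ast k}$ converges and admits a Gaussian upper bound. Hypothesis (ii), the $\alpha/2,\alpha$ Hölder continuity of $a_{ij}$ in its two arguments, is crucial here: it produces a factor $(t-\sigma)^{\alpha/2}$ in the estimate of $(L-L_0)Z$, which precisely absorbs one unit of the time singularity carried by $\partial^2_{xx} Z \sim (t-\sigma)^{-(n+2)/2}$. An induction on the iterated kernel using the parabolic beta-function identity would then give
\begin{equation*}
|\Phi(t,x;s,y)| \leq \frac{C}{(t-s)^{(n+2-\alpha)/2}}\exp\!\left(-\frac{\lambda'(x-y)^2}{4(t-s)}\right),
\end{equation*}
with $\lambda'<\lambda_0$ (the lower ellipticity constant) and a constant summable over $k$. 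Substituting this back into the parametrix representation and computing the space--time Gaussian convolution restores the bound (\ref{apriori}), the extra factor $(t-s)^{\alpha/2}$ only improving matters. The gradient estimate (\ref{apriorider}) would be obtained by differentiating $Z$ in $x$ before the convolution, which costs an additional $(t-s)^{-1/2}$; the correction integral $\int_s^t\int Z_{,i}\,\Phi$ remains even better behaved because the $(t-\sigma)^{\alpha/2}$ smoothing inherited from $\Phi$ is still available. Assumption (iii) on the first-order coefficients enters only in the non-principal part of $LZ$ and contributes lower-order terms that are dominated by the same Gaussian.

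The main obstacle will be the careful bookkeeping required to preserve a uniform Gaussian parameter through the Neumann iteration: each parabolic convolution of two Gaussians produces a Gaussian with a slightly worse decay constant, so one must start with an exponent $\lambda'$ strictly smaller than the ellipticity constant $\lambda_0$ and then show that the cumulative loss over all iterates can be absorbed in the constant $C$. Once this is handled, the assertion that (\ref{apriori}) and (\ref{apriorider}) are in $L^1$ as functions of $x-y$ for each fixed $t>s$ follows from direct Gaussian integration, yielding uniform $L^1$-norms of order $1$ and $(t-s)^{-1/2}$ respectively, which is what the final sentence of the lemma records for the convolution-type estimates used later in the scheme.
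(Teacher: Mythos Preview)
Your outline is correct and follows the classical Levi parametrix construction from Friedman and Ladyzhenskaya--Solonnikov--Uraltseva; this is indeed the standard route to Gaussian bounds of the type (\ref{apriori}) and (\ref{apriorider}). The paper itself does not supply a proof of this lemma: it is stated as a known result and used as a black box throughout the scheme, with the Levy expansion mentioned only in passing (``constructible in the classical sense by the Levy expansion'') when the fundamental solution is invoked in later arguments. So there is no divergence to compare---your sketch simply fills in what the paper leaves to the literature.
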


We also use some related standard results on the adjoint. We have
\begin{lem}\label{lemgaussad}
Let the assumptions of the preceding lemma be satisfied on a domain $D=[T_0,T_1]\times {\mathbb R}^n$, and that in addition we have 
\begin{itemize}
 \item[i)] The coefficient functions $a_{ij}$ and their first and second derivatives are bounded continuous functions on $D$,
 \item[ii)] the coefficient functions $b_i$ and their first and second derivatives are bounded continuous functions on $D$.
\end{itemize}
  Then the fundamental solution $p^*$ of the adjoint equation $L^*p*=0$ exists. Furthermore, for $t>s$
 \begin{equation}
 p^*(s,y;t,x)=p(t,x;s,y)
 \end{equation}
and analogous relations for the partial derivatives hold. Moreover,
and satisfies the Gaussian a priori 
estimates
\begin{equation}
|p^*(t,x;s,y)|\leq \frac{C}{(t-s)^{n/2}}\exp\left(-\frac{\lambda(x-y)^2}{4(t-s)} \right),
\end{equation}
and
\begin{equation}
{\Bigg|}\frac{\partial}{\partial x_i}p^*(t,x;s,y){\Bigg|}\leq \frac{C}{(t-s)^{(n+1)/2}}\exp\left(-\frac{\lambda(x-y)^2}{4(t-s)} \right),
\end{equation}
for some constants $C>0$ and $\lambda>0$ ($\lambda$ less or equal to the lower ellipticity constant in general).
\end{lem}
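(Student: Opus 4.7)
The plan is to reduce the adjoint statement to the previous Gaussian-estimate lemma by rewriting $L^*$ in non-divergence form and reversing time, and then to obtain the duality relation via Green's identity.

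First I would write out the formal adjoint explicitly:
\begin{equation*}
L^*q = -\frac{\partial q}{\partial t} - \sum_{i,j=1}^n \frac{\partial^2}{\partial x_i \partial x_j}(a_{ij}q) - \sum_{i=1}^n \frac{\partial}{\partial x_i}(b_i q).
\end{equation*}
Expanding the derivatives using the product rule and the additional hypotheses (that the $a_{ij}$ and $b_i$ have bounded continuous first and second spatial derivatives on $D$), one obtains
\begin{equation*}
L^*q = -\frac{\partial q}{\partial t} - \sum_{i,j=1}^n a_{ij}\frac{\partial^2 q}{\partial x_i \partial x_j} + \sum_{i=1}^n \tilde b_i \frac{\partial q}{\partial x_i} + \tilde c \, q,
\end{equation*}
where $\tilde b_i = -2\sum_j \frac{\partial a_{ij}}{\partial x_j} - b_i$ and $\tilde c = -\sum_{i,j}\frac{\partial^2 a_{ij}}{\partial x_i\partial x_j} - \sum_i \frac{\partial b_i}{\partial x_i}$. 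By assumption these new coefficients are bounded and, together with the original H\"older hypothesis on $a_{ij}$ and $b_i$, inherit $\alpha$-H\"older continuity in $x$ uniformly in $t$.

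Next I would perform the time reversal $t \mapsto T_0+T_1 - t$, which converts $L^*$ into a standard forward-parabolic operator of exactly the type covered by Lemma \ref{lemgauss} (uniform parabolicity is preserved since the principal part is unchanged, and the H\"older conditions on the coefficients persist). Applying Lemma \ref{lemgauss} then yields the existence of a fundamental solution $p^*$ of $L^*p^*=0$ together with the stated Gaussian a priori bounds on $|p^*|$ and $|\partial_{x_i}p^*|$ after undoing the time reversal; the order $(s,y;t,x)$ in the statement simply reflects the reversed time direction.

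The identity $p^*(s,y;t,x) = p(t,x;s,y)$ is the main technical point. I would prove it by Green's identity applied to the bilinear form $\int (q L u - u L^* q)\,dt\,dx$ over the slab $[s+\varepsilon,t-\varepsilon]\times \mathbb{R}^n$, with $u(\cdot,\cdot) = p(\cdot,\cdot;s,y)$ and $q(\cdot,\cdot) = p^*(\cdot,\cdot;t,x)$. The Gaussian decay in both lemmas makes the spatial boundary terms at infinity vanish (this is where the $\mathbb{R}^n$-domain hypothesis and the Gaussian bounds enter), so only the time boundary terms survive; passing $\varepsilon \downarrow 0$ and using the distributional identities $Lp(\cdot;s,y) = \delta_{(s,y)}$ and $L^*p^*(\cdot;t,x)=\delta_{(t,x)}$ leaves precisely $p(t,x;s,y) = p^*(s,y;t,x)$. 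The analogous relation for $\partial_{x_i}$ follows either by differentiating this identity in the $x$ or $y$ variable and invoking the pointwise Gaussian bound on the derivative to justify passage under the integral, or directly by applying the same Green-identity argument to the derivative.

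The main obstacle I expect is ensuring that all boundary-term manipulations in Green's identity are justified in the unbounded domain $\mathbb{R}^n$ and near the time-singularity of the fundamental solutions; this is exactly what the Gaussian a priori estimates of Lemma \ref{lemgauss} deliver, which is why the bounded-derivative hypotheses on $a_{ij}$ and $b_i$ are essential (they guarantee the adjoint coefficients satisfy the H\"older hypothesis needed to apply that lemma in the first place).
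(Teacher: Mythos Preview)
The paper does not actually supply a proof of this lemma: it states the result as classical and remarks afterwards that ``the useful relationship of the fundamental solution and its adjoint may also be verified directly using the Levy expansion as we did in \cite{KB1}.'' Your approach --- rewrite $L^*$ in non-divergence form, reverse time, invoke Lemma~\ref{lemgauss}, and recover the duality $p^*(s,y;t,x)=p(t,x;s,y)$ via Green's identity on a slab --- is the other standard route (essentially Friedman's), and it is correct in spirit and different from the Levy-expansion verification the paper gestures at. The Levy-expansion method builds $p$ and $p^*$ explicitly as convergent series and reads off the symmetry and the Gaussian bounds term by term; your method is cleaner conceptually because it reduces everything to a single application of Lemma~\ref{lemgauss} plus an integration-by-parts identity, at the price of needing to justify the boundary terms.

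One genuine technical point you should flag: after expanding, the adjoint acquires a zero-order term $\tilde c = -\sum_{i,j}\partial_i\partial_j a_{ij} - \sum_i \partial_i b_i$, but Lemma~\ref{lemgauss} as stated in the paper has no zero-order coefficient, and the additional hypotheses (i)--(ii) only say the derivatives of $a_{ij}$ and $b_i$ are bounded continuous, not H\"older. So strictly speaking you cannot invoke Lemma~\ref{lemgauss} verbatim; you need the obvious extension allowing a bounded H\"older zero-order term, and you need H\"older regularity of $\tilde b_i,\tilde c$. In the paper's actual application the $a_{ij}$ are constant and the $b_i$ are in $H^2\cap C^2$, so this is harmless there, but as a proof of the lemma in the stated generality it is a small gap (arguably inherited from the paper's own hypotheses). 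Mention it and move on.
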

Concerning these lemmas we note that in our scheme only the first order coefficients are variable. The useful relationship of the fundamental solution and its adjoint may also be verified directly using the Levy expansion as we did in \cite{KB1}. We also discussed the matter in \cite{KNS}. Our lemma \ref{poisson} shows that we may base a global scheme for the incompressible Navier-Stokes equation on approximations in Leray projection form which are $H^2$ with respect to the spatial variables. The scheme is in $H^{2}$ for fixed time with respect to the spatial variables (which is easier to see), and this helps us in order to estimate the gradient of the pressure functions of type $p^{\rho,f,g}$ in $H^1$ as in the lemma \ref{poisson} above. However we need to show that in addition we have some local contraction for the functional increments $\delta v^{\rho,l,k}_i$ as a sequence with respect to the index $k$.

Next, for each $l\geq 1$ we consider the Leray projection form of the incompressible Navier-Stokes equation at each time step $l\geq 1$, i.e.,
\begin{equation}\label{qparasystnav2l}
\left\lbrace \begin{array}{ll}
\frac{\partial v^{\rho,l}_i}{\partial \tau}=\rho_l\nu\sum_{j=1}^n \frac{\partial^2 v^{\rho,l}_i}{\partial x_j^2} 
-\rho_l\sum_{j=1}^n v^{\rho,l}_j\frac{\partial v^{\rho,l}_i}{\partial x_j}\\
\\
+\rho_l\int_{{\mathbb R}^n}\sum_{i,j=1}^n \left( \frac{\partial v^{\rho,l}_i}{\partial x_j}\frac{\partial v^{\rho,l}_j}{\partial x_i}\right) (\tau,y)\frac{\partial}{\partial x_i}K_n(x-y)dy,\\
\\
\mathbf{v}^{\rho,l}(l-1,.)=\mathbf{v}^{\rho,l-1}(l-1,.).
\end{array}\right.
\end{equation}  
Next we may define a scheme similar to the scheme of the multivariate Burgers equation and with time step size $\rho_l\sim\frac{1}{l}$. Having computed $v^{\rho,l-1}_i(l-1,.)\in H^2\cap C^2$ we set up a local iteration scheme. In general, for $n\geq 3$ a condition of form $v^{\rho,l-1}_i(l-1,.)\in H^m\cap C^2$ for an integer $m$ with $m>\frac{1}{2}n$ is an appropriate choice. But this is quite similar. Hence, think of $n=3$ and of data and functional series approximations with $v^{\rho,k,l}_i(\tau.)\in H^2$ first. 
Again, the solution in Leray-projection form is constructed via a functional series
\begin{equation}\label{funciv}
v^{\rho ,l }_i=v^{\rho,1,l}_i+\sum_{k=1}^{\infty} \delta v^{\rho, k+1,l}_i, 1\leq i\leq n,
\end{equation} 
 where $v^{\rho,1,l}_i$ solves 
 \begin{equation}\label{scalparasystlin10v}
\left\lbrace \begin{array}{ll}
\frac{\partial v^{\rho,1,l}_i}{\partial \tau}-\rho_l\left( \nu\sum_{j=1}^n \frac{\partial^2 v^{\rho,1,l}_i}{\partial x_j^2} 
-\sum_{j=1}^n v^{\rho,l-1}_j(l-1,.)\frac{\partial v^{\rho,1,l}_i}{\partial x_j}\right)\\
\\
=\rho_l\int_{{\mathbb R}^n}\sum_{j,m=1}^n \left( \frac{\partial v^{\rho,l-1}_j}{\partial x_m}\frac{\partial v^{\rho,l-1}_m}{\partial x_j}\right) (l-1,y)\frac{\partial}{\partial x_i}K_n(x-y)dy,\\
\\
{\bf v}^{\rho,1,l}(l-1,.)={\bf v}^{l-1}(l-1,.),
\end{array}\right.
\end{equation} 
and $\delta v^{\rho,k,l}_i=v^{\rho,k,l}_i-v^{\rho,k-1,l}_i,~1\leq i\leq n$ solves
\begin{equation}\label{deltaurhok0}
\left\lbrace \begin{array}{ll}
\frac{\partial \delta v^{\rho,k+1,l}_i}{\partial \tau}-\rho_l\left( \sum_{j=1}^n \frac{\partial^2 \delta v^{\rho,k+1,l}_i}{\partial x_j^2} 
-\sum_{j=1}^n v^{\rho,k,l}_j\frac{\partial \delta v^{\rho,k+1,l}_i}{\partial x_j}\right)=\\
\\
-\rho_1\sum_j\left(\delta v^{\rho,k,l}_j\frac{\partial v^{\rho,k-1,l}}{\partial x_j} \right)\\ 
\\
-\rho_l\int_{{\mathbb R}^n}K_{n,i}(x-y){\Big (} \left( \sum_{j,m=1}^n\left( v^{\rho,k,l}_{m,j}+v^{\rho,k-1,l}_{m,j}\right)(\tau,y) \right) \times\\
\\
\left( \delta v^{\rho,k,l}_{j,m}(\tau,y) \right) {\Big)}dy\\
\\
\mathbf{\delta v}^{\rho,k+1,l}(l-1,.)= 0,
\end{array}\right.
\end{equation}
and where $\delta v^{\rho,k+1,l}_{i}=v^{\rho,k+1,l}_{i}-v^{\rho,k,l}_{i}$ for $k\geq 1$, and 
$\delta v^{\rho,1,l}_j= v^{\rho,1,l}_j-v^{\rho,0,l}:=v^{\rho,1,l}_j-v^{\rho,l-1}_i(l-1,.)$. Note that $\delta v^{\rho,1,1}_j= v^{\rho,1,1}_j-h_j$ at the first time-step.
The equations for $v^{\rho,1,l}_i$ and for $\delta v^{\rho,k,l}_i$ are linearized and localized equations where by localisation we mean the fact that the global integral terms in the equation for $v^{\rho,1,l}_i$ and for $\delta v^{\rho,k,l}_i$ are given in terms of the initial data and the data from the previous iteration step respectively. Assuming that $v^{\rho,l-1}_i(l-1,.)\in H^2$ we show that $v^{\rho,1,l}(\tau,.)$ in $H^2$ uniformly with respect to $\tau$ and that the series $\left( \delta v^{\rho,k,l}_i(\tau,.)\right)$ satisfies a contraction property in $H^2$ uniformly with respect to $\tau\in [l-1,l]$.   We show that the functions $v^{\rho,1,l}_i$ are limits of a functional series where we have a contraction property for the elements of the series of form $\delta v^{\rho,k+1,l}$. This implies that the series (\ref{funciv}) evaluated a $\tau\in [l-1,l]$ converges in $H^2$ such that for $n=3$ 
we may apply the following extension of a standard Sobolev lemma.
\begin{lem}
For $s=\alpha+k+\frac{1}{2}n$ with $\alpha\in (0,1)$ we have
\begin{equation}
H^s\subset C^{\alpha},
\end{equation}
where $C^{\alpha}$ is the space of H\"{o}lder-continuous functions.
\end{lem}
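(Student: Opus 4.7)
The plan is to prove this standard Sobolev embedding by Fourier analysis. First I would reduce to the base case $k=0$: if $u \in H^s$ with $s=\alpha+k+\frac{1}{2}n$, then for every multiindex $\beta$ with $|\beta|\le k$ the derivative $D^\beta u$ lies in $H^{s-|\beta|}\subset H^{\alpha+\frac{1}{2}n}$, so it suffices to show that $H^{\alpha+\frac{1}{2}n}\subset C^\alpha$ for $\alpha\in(0,1)$.

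For the base case I would work on the Fourier side, writing $|u|_{H^s}^2=\int(1+|\xi|^2)^{s}|\hat u(\xi)|^2\,d\xi$. The first step is to observe that $\hat u\in L^1$ by Cauchy--Schwarz:
\begin{equation}
\int|\hat u(\xi)|\,d\xi\le\left(\int(1+|\xi|^2)^{-s}\,d\xi\right)^{1/2}|u|_{H^s},
\end{equation}
and the weight integral converges because $2s>n$. In particular the Fourier inversion formula holds pointwise and $u$ is continuous and bounded.

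For the Hölder estimate I would exploit the elementary bound $|e^{ix\cdot\xi}-e^{iy\cdot\xi}|\le\min\bigl(2,|x-y||\xi|\bigr)$ and split the frequency domain at $R:=1/|x-y|$:
\begin{equation}
|u(x)-u(y)|\le C\int_{|\xi|\le R}|x-y||\xi||\hat u(\xi)|\,d\xi+C\int_{|\xi|>R}2|\hat u(\xi)|\,d\xi.
\end{equation}
Applying Cauchy--Schwarz in each piece against the weight $(1+|\xi|^2)^{s/2}$, the second integral is bounded by $C|u|_{H^s}\bigl(\int_{|\xi|>R}(1+|\xi|^2)^{-s}d\xi\bigr)^{1/2}\lesssim |u|_{H^s}R^{-\alpha}=|u|_{H^s}|x-y|^\alpha$, while the first is bounded by $C|x-y||u|_{H^s}\bigl(\int_{|\xi|\le R}|\xi|^2(1+|\xi|^2)^{-s}d\xi\bigr)^{1/2}\lesssim|x-y|\cdot R^{1-\alpha}|u|_{H^s}=|u|_{H^s}|x-y|^\alpha$. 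Summing the two contributions gives $|u(x)-u(y)|\le C|u|_{H^s}|x-y|^\alpha$, which is Hölder continuity of order $\alpha$.

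The delicate point is keeping track of the borderline weight integrals at $s=\alpha+\frac{1}{2}n$: the $L^1$ integrability of $\hat u$ needs $2s>n$, and the two split integrals above are precisely on the critical scale, each producing the same exponent $R^{-\alpha}$. Because $\alpha\in(0,1)$ is strict, every integral converges and the two powers of $R$ combine to give exactly $|x-y|^\alpha$. Once this is established for $k=0$, applying the same argument to each $D^\beta u$ with $|\beta|\le k$ completes the proof.
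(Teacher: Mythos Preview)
Your argument is correct and is the standard Fourier-analytic proof of the Sobolev--H\"older embedding. The paper, however, does not supply a proof of this lemma at all: it is simply quoted as ``an extension of a standard Sobolev lemma'' and invoked without argument. So there is nothing to compare against---you have filled in a result the author takes for granted.

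One minor remark: the lemma as stated in the paper writes $s=\alpha+k+\tfrac{1}{2}n$ but only concludes $H^s\subset C^\alpha$, not $C^{k,\alpha}$. Your reduction to $k=0$ via $D^\beta u\in H^{s-|\beta|}$ actually proves the stronger (and more natural) conclusion $H^s\subset C^{k,\alpha}$, of which the stated inclusion is an immediate consequence. This is the right thing to do; the weaker statement in the paper is presumably just a typographical slip, since what is actually used downstream is that $H^2\subset C^\alpha$ for $n=3$ (the case $k=0$, $s=2$, $\alpha\in(0,\tfrac{1}{2})$), which your argument covers directly.
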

We note that the $L^p$ estimates which are useful for higher dimensions may be used in the context of a more general lemma. We have
\begin{lem}
For $s>k+\frac{n}{p}$ with $\alpha\in (0,1)$ we have
\begin{equation}
H^{s,p}\subset C^{k},
\end{equation}
where $H^{s,p}$ is the space of functions $f$ where $\Lambda^sf\in L^p$ along with
\begin{equation}
\Lambda^s=\left[I-{2\pi}^{-2}\Delta\right]^{s/2}.
\end{equation}
\end{lem}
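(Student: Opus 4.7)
The plan is to prove this via the Bessel potential representation. By definition, $f\in H^{s,p}$ means there exists $g\in L^p\left({\mathbb R}^n\right)$ with $\Lambda^s f=g$, equivalently
\begin{equation*}
f=G_s\ast g,\quad \widehat{G_s}(\xi)=\left(1+|2\pi\xi|^2\right)^{-s/2},
\end{equation*}
where $G_s$ is the Bessel kernel. First I would reduce the statement to the base case $k=0$. For any multi-index $\alpha$ with $|\alpha|\le k$, differentiation is a Fourier multiplier that commutes with $\Lambda^s$, so $D^{\alpha}f = G_{s-|\alpha|}\ast h_\alpha$ for a suitable $h_\alpha\in L^p$ with $\|h_\alpha\|_{L^p}\lesssim \|f\|_{H^{s,p}}$. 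Since $s-|\alpha|>s-k>\frac{n}{p}$, the case $k=0$ applied to $D^\alpha f$ gives $D^\alpha f\in C^0$ for every $|\alpha|\le k$, hence $f\in C^k$.

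It therefore suffices to prove: if $s>\frac{n}{p}$, then every $f=G_s\ast g$ with $g\in L^p$ is continuous and bounded on ${\mathbb R}^n$. The key input is the classical asymptotic analysis of $G_s$ (via its integral representation in terms of the Gamma function): $G_s$ is smooth on ${\mathbb R}^n\setminus\{0\}$, is positive, satisfies
\begin{equation*}
G_s(x)\sim c_{n,s}\,|x|^{s-n}\quad\text{as }|x|\to 0\text{ (when }s<n\text{)},
\end{equation*}
and decays exponentially as $|x|\to\infty$. With $p'$ the conjugate exponent of $p$, integrability near the origin of $|x|^{p'(s-n)}$ requires $p'(n-s)<n$, which is equivalent to $s>\frac{n}{p}$; exponential decay handles the tail. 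Thus $G_s\in L^{p'}\left({\mathbb R}^n\right)$.

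Given $G_s\in L^{p'}$ and $g\in L^p$, the generalized Young inequality $(\ref{Y2})$ (with $r=\infty$) produces
\begin{equation*}
\|f\|_{L^{\infty}}=\|G_s\ast g\|_{L^{\infty}}\le \|G_s\|_{L^{p'}}\|g\|_{L^p}.
\end{equation*}
Continuity then follows from a density argument: approximate $g$ by a sequence $g_m\in C_c^{\infty}$ in $L^p$; each $G_s\ast g_m$ is continuous (since $G_s$ is locally integrable and $g_m$ is smooth with compact support, a routine dominated convergence argument applies), and by Young's inequality again $G_s\ast g_m\to G_s\ast g = f$ uniformly on ${\mathbb R}^n$. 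Uniform limits of continuous functions are continuous, giving $f\in C^0$.

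The principal technical obstacle is the pointwise asymptotic $G_s(x)\sim c|x|^{s-n}$ near the origin, which requires the explicit subordination formula
\begin{equation*}
G_s(x)=\frac{1}{(4\pi)^{s/2}\Gamma(s/2)}\int_0^{\infty}t^{(s-n)/2-1}e^{-t-|x|^2/(4t)}\,dt.
\end{equation*}
Extracting the correct homogeneity near $x=0$ and the exponential decay as $|x|\to\infty$ is a standard but somewhat involved calculation; once in hand, the criterion $s>\frac{n}{p}\Leftrightarrow G_s\in L^{p'}$ drops out and the remainder of the argument is Young's inequality together with density. The case $s\ge n$ is easier because $G_s$ has no singularity at the origin and lies in every $L^q$.
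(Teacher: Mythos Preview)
Your argument is correct and is the standard Bessel-potential proof of the Sobolev embedding $H^{s,p}\subset C^k$. Note, however, that the paper does not supply a proof of this lemma at all: it is stated as a known result (a companion to the preceding H\"older-space embedding) and is simply invoked in order to guarantee that the first-order coefficients of the approximating parabolic equations are sufficiently regular. So there is no ``paper's own proof'' to compare against here.

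That said, your write-up is sound. The reduction to $k=0$ via $D^{\alpha}f\in H^{s-|\alpha|,p}$ is clean; the key step---showing $G_s\in L^{p'}$ precisely when $s>n/p$ via the near-origin asymptotic $G_s(x)\sim c_{n,s}|x|^{s-n}$ and the exponential tail---is exactly what is needed; and the use of Young's inequality (which the paper itself records as (\ref{Y2})) together with density of $C_c^\infty$ in $L^p$ to pass from boundedness to continuity is the right closing move. The subordination formula you quote for $G_s$ is the standard route to the asymptotics; one could alternatively cite Stein's \emph{Singular Integrals and Differentiability Properties of Functions}, Chapter V, where this computation is carried out in detail.

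One small cosmetic point: the condition ``$\alpha\in(0,1)$'' in the lemma statement is vestigial (carried over from the preceding H\"older lemma) and plays no role in the $C^k$ embedding; you correctly ignore it.
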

Applying such type of lemmas we can ensure that the first order coefficients (evaluated at time $\tau$)  of the equations which determine our approximations $v^{\rho,k,l}_i$ satisfy classical conditions which are sufficient for the existence of fundamental solutions of the associated linear parabolic equations of our scheme. Especially first order coefficients evaluated at $\tau$ are in the H\"{o}lder space $C^{\alpha}$ for some $\alpha\in (0,0.5)$ and  in $C^1_0$ uniformly with respect to $\tau$, i.e., the in the space of continuously differentiable functions which vanish at spatial infinity. 

The construction here defines a weak solution in $H^2$ and in $H^{2,\infty}$ as $n=3$. We have not mentioned $H^{2,\infty}$-estimates explicitly, but the estimated above can be adapted straightforwardly. Note that the application of the generalized Young inequality is even more simple in this case: for $r=\infty$ we have $1+\frac{1}{r}=\frac{1}{p}+\frac{1}{q}$, and we may use $p=q=2$ outside a ball and $p=\infty$ and $q=1$ inside a ball. Since  $v^{\rho,l}\in C^{\alpha}$ as the limit of the functional series in (\ref{funciv}), we have representations in terms of $v^{\rho,l}$ in terms  of the fundamental solution of
\begin{equation}
\frac{\partial p}{\partial \tau}-\rho_l\left( \sum_{j=1}^n \frac{\partial^2 p}{\partial x_j^2} 
-\sum_{j=1}^n v^{\rho,l}_j\frac{\partial  p}{\partial x_j}\right)=0,
\end{equation}
and this leads to the immediate conclusion that the solution is classical.
Note that for local restrictions to a bounded domain $\Omega\subset {\mathbb R}^n$ the series $|v^{\rho,k,l}_i(\tau,.)|_{\Omega}$ converges to a limit $ v^{\rho,l}(\tau,.)|_{\Omega}$ in a classical Banach space. Recall the following fact, which is better known for H\"{o}lder spaces.
\begin{prop}\label{propcl}
For open and bounded $\Omega\subset {\mathbb R}^n$ and consider the function space
\begin{equation}
\begin{array}{ll}
C^m\left(\Omega\right):={\Big \{} f:\Omega \rightarrow {\mathbb R}|~\partial^{\alpha}f \mbox{ exists~for~}~|\alpha|\leq m\\
\\
\mbox{ and }\partial^{\alpha}f \mbox{ has an continuous extension to } \overline{\Omega}{\Big \}}
\end{array}
\end{equation}
where $\alpha=(\alpha_1,\cdots ,\alpha_n)$ denotes a multiindex and $\partial^{\alpha}$ denote partial derivatives with respect to this multiindex. Then the function space $C^m\left(\overline{\Omega}\right)$ with the norm
\begin{equation}
|f|_m:=|f|_{C^m\left(\overline{\Omega}\right) }:=\sum_{|\alpha|\leq m}{\big |}\partial^{\alpha}f{\big |}
\end{equation}
is a Banach space. Here,
\begin{equation}
{\big |}f{\big |}:=\sup_{x\in \Omega}|f(x)|.
\end{equation}
\end{prop}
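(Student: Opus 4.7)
The plan is to verify the two standard axioms: that $|\cdot|_m$ is a norm on the claimed set, and that the space is complete under this norm. Since $\overline{\Omega}$ is compact, any continuous function on $\overline{\Omega}$ is bounded, so for $f\in C^m(\overline{\Omega})$ each $\partial^\alpha f$ (extended continuously to $\overline{\Omega}$) has finite sup-norm, and $|f|_m<\infty$. Positive definiteness, homogeneity, and the triangle inequality are inherited termwise from the sup-norm $|\cdot|$, so the norm axioms are routine.

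For completeness, I would let $(f_k)$ be a Cauchy sequence in $(C^m(\overline{\Omega}),|\cdot|_m)$. Then for each multiindex $\alpha$ with $|\alpha|\leq m$ the sequence $(\partial^\alpha f_k)$ is Cauchy in $(C(\overline{\Omega}),|\cdot|)$. Since $(C(\overline{\Omega}),|\cdot|)$ is a Banach space (uniform limits of continuous functions on the compact set $\overline{\Omega}$ are continuous), there exists $g_\alpha\in C(\overline{\Omega})$ such that $\partial^\alpha f_k\to g_\alpha$ uniformly on $\overline{\Omega}$. Set $f:=g_0$.

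The key step, and the only nontrivial one, is to identify $\partial^\alpha f=g_\alpha$ on $\Omega$ for every $|\alpha|\leq m$. I would argue by induction on $|\alpha|$. For $|\alpha|=0$ the identity holds by definition. For the inductive step, write $\partial^\alpha=\partial_j\partial^\beta$ with $|\beta|=|\alpha|-1$ and assume $\partial^\beta f=g_\beta$ is already known. Fix $x\in\Omega$ and a small segment $[x,x+te_j]\subset\Omega$. The fundamental theorem of calculus applied to $f_k$ gives
\begin{equation}
\partial^\beta f_k(x+te_j)-\partial^\beta f_k(x)=\int_0^t \partial_j\partial^\beta f_k(x+se_j)\,ds.
\end{equation}
Uniform convergence $\partial^\beta f_k\to g_\beta$ and $\partial_j\partial^\beta f_k=\partial^\alpha f_k\to g_\alpha$ on the compact segment lets me pass to the limit on both sides, obtaining $g_\beta(x+te_j)-g_\beta(x)=\int_0^t g_\alpha(x+se_j)\,ds$. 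Differentiating in $t$ at $t=0$ (using continuity of $g_\alpha$) yields $\partial_j g_\beta(x)=g_\alpha(x)$, i.e., $\partial^\alpha f(x)=g_\alpha(x)$. Since $g_\alpha$ is continuous on $\overline{\Omega}$, each $\partial^\alpha f$ extends continuously to $\overline{\Omega}$, so $f\in C^m(\overline{\Omega})$.

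Finally, $|f_k-f|_m=\sum_{|\alpha|\leq m}|\partial^\alpha f_k-g_\alpha|\to 0$ by construction, giving the required convergence in the $C^m$-norm. The main obstacle is the inductive identification of limits with derivatives; the rest is bookkeeping. The argument uses no property of $\Omega$ beyond openness and boundedness, and in particular does not require any smoothness of the boundary, since we work with pointwise limits inside $\Omega$ and then use uniform continuity on the compact set $\overline{\Omega}$ to extend.
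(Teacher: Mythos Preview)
Your proof is correct and is the standard argument for completeness of $C^m(\overline{\Omega})$. The paper does not actually prove this proposition; it simply recalls it as a well-known fact (introduced with ``Recall the following fact, which is better known for H\"{o}lder spaces''), so there is nothing to compare against. Your inductive identification of the uniform limits $g_\alpha$ with the partial derivatives $\partial^\alpha f$ via the fundamental theorem of calculus is exactly the right mechanism, and your observation that only openness and boundedness of $\Omega$ are used is accurate.
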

This leads to a second argument that the limit is indeed of form $v^{\rho,l}_i(\tau,.)\in H^2\cap C^2$ uniformly in $\tau$ and such that $\mathbf{v}^{\rho,l}$ satisfies the incompressible Navier-Stokes equation locally on $[l-1,l]\times {\mathbb R}^n$. This variation of argument has the advantage that it does not depend on dimension. On the other hand we can do the construction in $H^m$ instead of $H^2$ for $m>2+\frac{1}{2}n$, so this is a matter of taste. The last step then is to show that we have a linear bound of growth with respect to $H^2$. For this purpose it is essential to show that we have a global linear bound for the Leray projection term. It is at this point that it seems useful to introduce a control function as outlined in the introduction. We shall show that the $H^2$-contraction result can be extended to the controlled system, and then we shall show that 
\begin{equation}
|v^{r,\rho,l}_i(l,.)|_{H^2}\leq |v^{r,\rho,l-1}_i(l-1,.)|_{H^2}+C_2
\end{equation}
for a constant $C_2$ which is independent of the time step number $l$ and which holds for all $1\leq i\leq n$. This implies that we have a global bound
\begin{equation}
\max_{1\leq i\leq n}|v^{r,\rho,l}_i(l,.)|_{H^2}\leq \max_{1\leq i\leq n}|h_i|_{H^2}+lC_2,
\end{equation}
and since the control function $\mathbf{r}$ has a global linear bound as well this will show that
\begin{equation}
\max_{1\leq i\leq n}|v^{r,\rho,l}_i(l,.)|_{H^2}\leq \max_{1\leq i\leq n}|h_i|_{H^2}+lC^*_2,
\end{equation}
for some other constant $C^*_2$ which is independnet of the time step number $l$.
Now let us consider this program of proof in more detail. We start with the local contraction estimate for the uncontrolled system.
First we observe
\begin{lem}
Let $v^{\rho,l-1}_i(l-1,.)\in H^2\cap C^2$ for all $1\leq i\leq n$. Then there exists a classical solution $v^{\rho,1,l}_i$ of (\ref{scalparasystlin10v}) with
\begin{equation}
v^{\rho,1,l}(\tau,.)\in H^2\cap C^2
\end{equation}
for all $\tau\in [l-1,l]$. Moreover, for $n=3$ we have
\begin{equation}
v^{\rho,1,l}(\tau,.)\in H^{2,\infty}, 
\end{equation}
where the latter space denotes the Sobolev space with weak derivatives up to second order in $L^{\infty}$. 
\end{lem}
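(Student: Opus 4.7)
The plan is to solve the linear parabolic problem (\ref{scalparasystlin10v}) by Duhamel's formula using the fundamental solution of its linear part, and then verify the desired regularity of the resulting expression by combining the Gaussian a~priori estimates of Lemma \ref{lemgauss} with the Leray projection bound of Lemma \ref{poisson}. First I check the hypotheses of Lemma \ref{lemgauss}: in (\ref{scalparasystlin10v}) the second-order coefficients are constant (so trivially $C^{\alpha/2,\alpha}$), and the first-order coefficients are $\rho_l v^{\rho,l-1}_j(l-1,\cdot)$, which are time-independent and, thanks to $v^{\rho,l-1}_j(l-1,\cdot)\in H^2\cap C^2$ together with Sobolev embedding (for $n=3$, $H^2\subset C^{\alpha}$), uniformly $C^{\alpha}$ in $x$. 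Hence a fundamental solution $\Gamma^l$ exists with majorants (\ref{apriori}), (\ref{apriorider}).

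Next I write the Duhamel representation
\begin{equation}
v^{\rho,1,l}_i(\tau,x)=\int_{\mathbb{R}^n}\Gamma^l(\tau,x;l-1,y)v^{\rho,l-1}_i(l-1,y)\,dy+\rho_l\int_{l-1}^{\tau}\!\!\int_{\mathbb{R}^n}\Gamma^l(\tau,x;s,y)F^{\,l-1}_i(y)\,dy\,ds,
\end{equation}
where $F^{\,l-1}_i$ is the Leray projection source evaluated at time $l-1$. Lemma \ref{poisson} (applied with $f=g=v^{\rho,l-1}(l-1,\cdot)$) gives $F^{\,l-1}_i\in H^1\cap C^1$ with explicit bounds in terms of $\max_j|v^{\rho,l-1}_j(l-1,\cdot)|_{H^2}$. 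The $H^2$ bound on $v^{\rho,1,l}_i(\tau,\cdot)$ is then obtained by applying derivatives up to second order to the representation: one derivative is left on $\Gamma^l$ (which has Gaussian majorant with uniform $L^1$-in-$y$ bound integrable in $(s,\tau)$ by (\ref{apriorider})) and the remaining one or two derivatives are shifted via the adjoint / partial integration onto the data factor $v^{\rho,l-1}_i(l-1,\cdot)\in H^2$ or the source $F^{\,l-1}_i\in H^1$. The generalized Young inequality (\ref{Y3}) with $p=1,q=r=2$ then yields $|v^{\rho,1,l}_i(\tau,\cdot)|_{H^2}\leq C$ uniformly in $\tau\in[l-1,l]$.

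For the $C^2$ statement I invoke the Hölder continuity of the first-order coefficient and the Hölder continuity of $F^{\,l-1}_i$ (by Sobolev embedding, $H^1\cap C^1$ suffices in $n=3$); standard Schauder-type estimates for the linear parabolic representation above then give $v^{\rho,1,l}_i(\tau,\cdot)\in C^2$. Equivalently, once iterating the argument in $H^k$ for $k>2+n/2$ one reads off $C^2$ via the Sobolev lemma. For $H^{2,\infty}$ with $n=3$ I run the same estimate but choose $r=\infty$ in the generalized Young inequality (\ref{Y4}), splitting the kernel $K_{,i}$ via the cutoff $\phi_{B_r}$ exactly as in the proof of Lemma \ref{poisson}: outside the ball I use $p=q=2$ (the Gaussian majorant of $\Gamma^l$ and its first derivative is in $L^2$ for $\tau>s$), and inside the ball I use $p=\infty,q=1$ together with the local $L^1$-integrability of $K_{,i}$.

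The main obstacle is the nonlocal source term, which does not live in $L^1$ a priori because $K_{,i}\notin L^1$ for $n=3$. This is precisely what Lemma \ref{poisson} resolves via the $\phi_{B_r}$-splitting together with the pointwise bound $|f_{i,j}g_{j,i}|\leq \tfrac12(f_{i,j}^2+g_{j,i}^2)$, so the technical work is already in place; the remaining task is the bookkeeping of which derivative is shifted to which factor when applying generalized Young. Throughout one uses that $\rho_l\sim 1/l$ and the fixed length of $[l-1,l]$ guarantee that the time integral of the Gaussian estimate in (\ref{apriorider}) (which behaves like $(\tau-s)^{-(n+1)/2}$ when treated pointwise but becomes integrable in $L^1_y$) produces only a finite multiplicative constant, uniformly in $l$.
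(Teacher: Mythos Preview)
Your proposal is correct and follows essentially the same route as the paper: you verify the H\"older hypothesis on the first-order coefficients via Sobolev embedding, write the Duhamel representation in terms of the fundamental solution $p^l$, invoke Lemma~\ref{poisson} to place the Leray projection source in $H^1$, shift one spatial derivative to the adjoint $p^{l,*}$ for the second-order estimate, and close with the Gaussian majorant plus generalized Young (\ref{Y3}) for $H^2$ and the $r=\infty$ variant for $H^{2,\infty}$. The paper does exactly this, with the only cosmetic difference that it writes out the second-derivative summands (\ref{secsum}), (\ref{secsum2}) explicitly rather than describing the shift in words; your remark about $\rho_l\sim 1/l$ in the last paragraph is not actually needed for this local lemma, but it does no harm.
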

\begin{proof}
Since $v^{r,\rho,l-1}_i(l-1,.)\in H^2$ we have $v^{r,\rho,l-1}_i(l-1,.)\in H^2\cap C^{\alpha}$ for $\alpha \in (0,0.5)$. Hence the fundamental solution $p^l$ of
\begin{equation}
\frac{\partial p^l}{\partial \tau}-\rho_l\nu\sum_{j=1}^n \frac{\partial^2 p^l}{\partial x_j^2} 
+\rho_l\sum_{j=1}^n v^{\rho,l-1}_j(l-1,.)\frac{\partial p^l}{\partial x_j}=0
\end{equation}
 exists (constructible in the classical sense by the Levy expansion) and the solution of the Cauchy problem in (\ref{scalparasystlin10v}) has the representation
\begin{equation}\label{scalparasystlin10vproof}
\begin{array}{ll}
 v^{\rho,1,l}_i(\tau,x)=\int_{{\mathbb R}^n}v^{\rho,l-1}_i(l-1,y)p^l(\tau,x,l-1,y)dy\\
 \\
+\rho_l\int_{l-1}^{\tau}\int_{{\mathbb R}^n}\int_{{\mathbb R}^n}\sum_{j,k=1}^n \left( \frac{\partial v^{\rho,l-1}_j}{\partial x_k}\frac{\partial v^{\rho,l-1}_k}{\partial x_j}\right) (l-1,y)K_{n,i}(z-y)\times \\
\\
\times p^l(s,x,l-1,z)dzdyds.
\end{array}
\end{equation}
Here, recall that $K_{n,i}$ denotes the partial first order derivative of the kernel $K_n$ with respect to the $i$th variable. 
Hence we have $v^{\rho,1,l}_i(\tau,.)\in C^2$ for all $\tau\in [l-1,l]$ which follows from classical analysis of the Levy expansion of the fundamental solution where we may differentiate under the integral in order to get a representation for the derivatives of first order for $\tau>l-1$. Moreover, the second derivatives of the last integral in (\ref{scalparasystlin10vproof}) have an adjoint representation (cf. also the argument in \cite{KB1} and \cite{KNS}) such that the second derivatives of $v^{\rho,1,l}_i$ with respect to the spatial variables $x_k$ and $x_m$ is the sum of
\begin{equation}
\int_{{\mathbb R}^n}v^{\rho,l-1}_i(l-1,y)\frac{\partial^2}{\partial x_k\partial x_m}p^l(\tau,x,l-1,y)dy
\end{equation}
(which exists since $v^{\rho,l-1}_i(l-1,.)$ is H\"{o}lder), and the second summand
\begin{equation}\label{secsum}
\begin{array}{ll}
+\rho_l\int_{l-1}^{\tau}\int_{{\mathbb R}^n}\int_{{\mathbb R}^n}\sum_{p,j=1}^n \frac{\partial}{\partial x_i} \left( \frac{\partial v^{\rho,l-1}_p}{\partial x_j}\frac{\partial v^{\rho,l-1}_j}{\partial x_p}\right) (l-1,y)\frac{\partial}{\partial x_k}K_n(z-y)\times \\
\\
\times \frac{\partial}{\partial x_m}p^{l,*}(s,x,l-1,z)dzdyds
\end{array}
\end{equation}
(for the adjoint $p^{l,*}$ cf. also \cite{KB1} and \cite{KNS} ). The term
\begin{equation}
\int_{{\mathbb R}^n}\sum_{j,k=1}^n \frac{\partial}{\partial x_i}\left( \frac{\partial v^{\rho,l-1}_j}{\partial x_k}\frac{\partial v^{\rho,l-1}_k}{\partial x_j}\right) (l-1,y)\frac{\partial}{\partial x_k}K_n(z-y)dy
\end{equation}
corresponds to an $L^2$-function according to our lemma \ref{poisson} above, and Gaussian estimates for the first derivatives of the fundamental solution and its adjoint plus an application of the generalized Young inequality ensure that (\ref{secsum}) is in $L^2$ for each $\tau$ (first the integrand is in $L^2$ and then the integral up to $\tau$ is in $L^2$ where $\tau$ is considered as an parameter). Note that the Gaussian a priori estimate of the fundamental solution in lemma \ref{lemgauss} is $L^1\cap L^2$ for fixed $\tau>s$ as a function of $x-y$. Similar for the Gaussian a priori estimate in lemma \ref{lemgaussad}. Let us look at the second term more precisely since this is the term which defines the extension of our scheme for the multivariate Burgers equation. We have
\begin{equation}\label{secsum2}
\begin{array}{ll}
+\rho_l\int_{l-1}^{\tau}\int_{{\mathbb R}^n}\int_{{\mathbb R}^n}\frac{\partial}{\partial x_i}\sum_{i,j=1}^n \left( \frac{\partial v^{\rho,l-1}_i}{\partial x_j}\frac{\partial v^{\rho,l-1}_j}{\partial x_i}\right) (l-1,y)\frac{\partial}{\partial x_k}K_n(z-y)\times \\
\\
\times \frac{\partial}{\partial x_m}p^{l,*}(s,x,l-1,z)dzdyds\\
\\
\leq \rho_l\int_{l-1}^{\tau}\int_{{\mathbb R}^n}\int_{{\mathbb R}^n}{\Big |}\sum_{j,k=1}^n \frac{\partial}{\partial x_i}\left( \frac{\partial v^{\rho,l-1}_j}{\partial x_k}\frac{\partial v^{\rho,l-1}_k}{\partial x_j}\right) (l-1,y)\frac{\partial}{\partial x_k}K_n(z-y){\Big |}\times \\
\\
\times {\Big |}\frac{C}{(t-s)^{(n+1)/2}}\exp\left(-\frac{\lambda(x-y)^2}{4(t-s)} \right){\Big |}dzdyds .
\end{array}
\end{equation}
Now our lemma \ref{poisson} and the generalized Young inequality in (\ref{Y1}) and (\ref{Y2}) with $r=2$ and $p=2$ and $q=1$, i.e.,
\begin{equation}\label{Y1*}
 f\in L^2~\mbox {and}~ g\in L^1~\rightarrow  f\ast g\in L^2,\mbox{if}~1+\frac{1}{2}=1+\frac{1}{r},
\end{equation}
and
\begin{equation}\label{Y2*}
|f\ast g|_{L^2}\leq |f|_{L^2} |g|_{L^1}
\end{equation}
where $f$ corresponds to the Leray projection solution of the gradient of the pressure analyzed in lemma \ref{poisson} and $g$ corresponds to the Gaussian a priori bound where we observe that for $t>s$ we have
\begin{equation}
y\rightarrow \frac{C}{(t-s)^{(n+1)/2}}\exp\left(-\frac{\lambda(y)^2}{4(t-s)} \right)\in L^p.
\end{equation}
for $p\geq 1$.
 Similarly for the first term. Moreover, from the representation of both summands we observe that $v^{\rho,1,l}_i\in C^2$. We note that for $n=3$ we may apply lemma \ref{poisson} and the generalized Young inequality in (\ref{Y1}) and (\ref{Y2}) for $r=\infty$ and $p=q=\frac{1}{2}$ in order to get $L^{\infty}$ estimates. 
\end{proof}

Next in order to construct a local solution of the Navier-Stokes equation we establish a contaction property for the correction functionals $\delta v^{\rho,k,l}_i$ of the first linear approximation $v^{\rho,1,l}_i$ considered above.

\begin{lem}\label{contrlem} Let $n=3$
If $v^{\rho,l-1}_i(l-1,.)\in H^2\cap C^2$ for all $1\leq i\leq n$ then for some time ste size $\rho_l$ we have a contraction
\begin{equation}
\max_{i\in\left\lbrace 1,\cdots ,n\right\rbrace }\sup_{\tau\in [l-1,l]}|\delta v^{\rho,k,l}_i(\tau,.)|_{H^2}\leq \frac{1}{2}\max_{i\in\left\lbrace 1,\cdots ,n\right\rbrace }\sup_{\tau\in [l-1,l]}|\delta v^{\rho,k-1,l}_i(\tau,.)|_{H^2},
\end{equation}
where we denote
\begin{equation}
\max_{i\in\left\lbrace 1,\cdots,n\right\rbrace }\sup_{\tau\in [l-1,l]}|\delta v^{\rho,k,l}_i(\tau,.)|_{H^2}=\max_{i\in\left\lbrace 1,\cdots,n\right\rbrace }\sup_{\tau\in [l-1,l]}|\delta v^{\rho,k,l}_i(\tau,.)|_{H^2}.
\end{equation}
Moreover, for $n=3$ we also have the contraction estimate
\begin{equation}
|\delta v^{\rho,k,l}(\tau,.)|_{H^{2,\infty}}\leq \frac{1}{2}|\delta v^{\rho,k-1,l}(\tau,.)|_{H^{2,\infty}}.
\end{equation} 
\end{lem}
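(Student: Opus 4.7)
The plan is to use Duhamel's formula with zero initial datum to represent $\delta v^{\rho,k+1,l}_i$ as an integral from $l-1$ to $\tau$ of the two source terms against the fundamental solution $p^{l,k}$ of the linearized parabolic operator with drift $v^{\rho,k,l}_j$. By an induction on $k$ (with base case provided by the preceding lemma, applied to $v^{\rho,1,l}$) the iterates satisfy $v^{\rho,k,l}_j(\tau,\cdot)\in H^2\cap C^2$ uniformly in $\tau\in[l-1,l]$; in particular the drift is H\"older with bounded spatial derivatives, so Lemmas \ref{lemgauss} and \ref{lemgaussad} supply Gaussian a priori bounds for $p^{l,k}$, its adjoint $p^{l,k,*}$, and their first spatial derivatives, each integrable in the spatial variable and with a time singularity of order $(t-s)^{-1/2}$ that is integrable on $[l-1,l]$.

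For the $H^2$-contraction I would first rewrite any second spatial derivative of the Duhamel integral by distributing one derivative onto the fundamental solution and one onto the source via the adjoint (exactly as in the proof of the preceding lemma and in \cite{KB1}), so that only first derivatives of $p^{l,k}$ or $p^{l,k,*}$ occur under the integral. The Burgers-type source $-\rho_l\sum_j\delta v^{\rho,k,l}_j\,\partial_{x_j}v^{\rho,k-1,l}_i$ is pointwise controlled in $L^2$ by $C\,|\delta v^{\rho,k,l}|_{H^2}\,|v^{\rho,k-1,l}|_{H^2}$ thanks to the product estimate (\ref{prule}) (valid since $s=2>n/2$ for $n=3$); convolving with the first-derivative Gaussian majorant, which is in $L^1$ uniformly on the time slice, and invoking (\ref{Y3}) keeps the result in $L^2$. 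The Leray projection source matches the bilinear pattern of Lemma \ref{poisson} with $f=\delta v^{\rho,k,l}$ and $g=v^{\rho,k,l}+v^{\rho,k-1,l}$, so that lemma gives an $H^1$-bound of the form $C(|v^{\rho,k,l}|_{H^2}+|v^{\rho,k-1,l}|_{H^2})\,|\delta v^{\rho,k,l}|_{H^2}$, and a further convolution with the Gaussian first-derivative kernel in $L^1$ stays in $L^2$ by (\ref{Y3}). Assembling both contributions, every term carries a factor $\rho_l$ times a constant depending only on the (by induction, uniform in $k$) $H^2$-bound of the iterates and on the uniform $L^1$-norm of the integrated Gaussian majorants; choosing $\rho_l$ small enough yields the factor $\tfrac{1}{2}$, which is compatible with the scaling $\rho_l\sim 1/l$ as soon as the uniform-in-$k$ $H^2$-bound of the iterates is of order one.

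The $H^{2,\infty}$-estimate proceeds from the same Duhamel representation and adjoint trick, but the convolution bounds now use the splitting of $K_{n,i}$ into the $L^1$ near-field piece $\phi_{B_r}K_{n,i}$ and the $L^2$ far-field piece $(1-\phi_{B_r})K_{n,i}$ exploited in Lemma \ref{poisson}, combined with (\ref{Y4}) applied with $(p,q,r)=(\infty,1,\infty)$ for the near-field part and $(p,q,r)=(2,2,\infty)$ for the far-field part, together with the fact that for $t>s$ the Gaussian majorants lie in $L^1\cap L^2$ uniformly on the slice; it is here that $n=3$ enters, precisely as in the square-integrability calculation for $(1-\phi_{B_r})K_{n,i}$ performed in the proof of Lemma \ref{poisson}.

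The main obstacle I anticipate is not the individual estimates, which are routine once the above structure is in place, but guaranteeing that the $H^2$-bound of $v^{\rho,k,l}$ entering the constants can be taken independent of $k$. This requires a simultaneous bootstrap: the contraction itself, once established, yields a uniform-in-$k$ bound on $|\delta v^{\rho,k,l}|_{H^2}$ provided one starts from a uniform bound on $v^{\rho,1,l}$, and the telescoping identity $v^{\rho,k,l}=v^{\rho,1,l}+\sum_{j=2}^{k}\delta v^{\rho,j,l}$ then produces a uniform-in-$k$ bound on $v^{\rho,k,l}$ itself. Choosing $\rho_l$ small enough to absorb the resulting constant closes the circular dependence and delivers the claimed $\tfrac{1}{2}$-contraction in both $H^2$ and $H^{2,\infty}$.
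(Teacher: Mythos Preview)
Your proposal is correct and follows essentially the same route as the paper: Duhamel representation with zero initial data, the adjoint trick to place at most one spatial derivative on $p^{k,l}$ (resp.\ $p^{k,l,*}$), Gaussian majorants combined with the Young inequality \eqref{Y3}, the kernel splitting $K_{n,i}=\phi_\epsilon K_{n,i}+(1-\phi_\epsilon)K_{n,i}$ for the Leray term, and the simultaneous $H^2$/$H^{2,\infty}$ bootstrap to make the constants uniform in $k$. The paper works out the extraction of the $|\delta v^{\rho,k,l}|_{H^2}$ factor from the Leray convolution in more explicit detail (and offers a Fourier-transform variant), whereas you invoke Lemma~\ref{poisson} more directly, but the substance is the same; one small imprecision is your remark that the uniform-in-$k$ bound on the iterates is ``of order one''---it is in fact of order $l$, which is precisely what the choice $\rho_l\sim 1/l$ is designed to absorb.
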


\begin{rem}
oreover, if $v^{\rho,l-1}_i(l-1,.)\in H^2\cap C^2$ this is a contraction in $|.|_{2}$ such that restrictions of the functional series $\left( v^{\rho,l,k}_i(\tau,.)\right)_{k}$ to an arbitrary bounded domain $\Omega\subset {\mathbb R}^n$ converge in a classical Banach space of twice differentiable functions with continuous extension at the boundary.
\end{rem}

\begin{proof} We prove the theorem in case $n=3$. 
\begin{equation}\label{serk}
v^{\rho,k,l}_j(\tau,.)=v^{\rho,1,l}_j(\tau,.)+\sum_{m=2}^k\delta v^{\rho,m,l}(\tau,.)\in H^2\subset C^{\alpha}
\end{equation}
for $\alpha\in (0,0.5)$ and uniformly with respect to $\tau\in [l-1,l]$. Moreover, we know inductively that $v^{\rho,k,l}_j(\tau,.)\in C^2$ for all $\tau\in [l-1,l]$. For $k=1$ (when we interpret the second summand in (\ref{serk}) to be zero) we know this from the previous lemma. Hence inductively with respect to the subiteration index $k$ we know that the fundamental solution $p^{k,l}$ of
\begin{equation}
\frac{\partial p^{k,l}}{\partial \tau}-\rho_l \nu\sum_{j=1}^n \frac{\partial^2 \delta p^{k,l}}{\partial x_j^2} 
+\rho_l\sum_{j=1}^n v^{\rho,k,l}_j\frac{\partial p^{k,l}}{\partial x_j}=0
\end{equation}
exists, and it follows that the solution of the linear problem (\ref{deltaurhok0}) has the representation
\begin{equation}\label{deltaurhok0proof}
\begin{array}{ll}
 \delta v^{\rho,k+1,l}_i(\tau,x)=\\
\\
-\rho_l\int_{l-1}^{\tau}\int_{{\mathbb R}^n}\sum_j\left(\delta v^{\rho,k,l}_j\frac{\partial v^{\rho,k-1,l}_i}{\partial x_j} \right)(s,y)p^{k,l}(\tau,x,s,y)dyds\\ 
\\
+\rho_l\int_{l-1}^{\tau}\int_{{\mathbb R}^n}\int_{{\mathbb R}^n}K_{n,i}(z-y){\Big (} \left( \sum_{j,k=1}^n\left( v^{\rho,k,l}_{k,j}+v^{\rho,k-1,l}_{k,j}\right)(\tau,y) \right) \times\\
\\
\left( \delta v^{\rho,k,l}_{j,k}(\tau,y) \right) {\Big)}p^{k,l}(\tau,x,s,z)dydsdydz.
\end{array}
\end{equation}
For the first order derivatives we have the representation
\begin{equation}\label{deltaurhok0proof2}
\begin{array}{ll}
 \frac{\partial}{\partial x_m}\delta v^{\rho,k+1,l}_i(\tau,x)=\\
\\
-\rho_l\int_{l-1}^{\tau}\int_{{\mathbb R}^n}\sum_j\left(\delta v^{\rho,k,l}_j\frac{\partial v^{\rho,k-1,l}_i}{\partial x_j} \right)(s,y)\frac{\partial}{\partial x_m}p^{k,l}(\tau,x,s,y)dyds\\ 
\\
+\rho_l\int_{l-1}^{\tau}\int_{{\mathbb R}^n}\int_{{\mathbb R}^n}K_{n,i}(z-y){\Big (} \left( \sum_{j,p=1}^n\left( v^{\rho,k,l}_{k,p}+v^{\rho,k-1,l}_{p,j}\right)(\tau,y) \right) \times\\
\\
\left( \delta v^{\rho,k,l}_{j,p}(\tau,y) \right) {\Big)}\frac{\partial}{\partial x_m}p^{k,l}(\tau,x,s,z)dydsdydz,
\end{array}
\end{equation}
and for the second order derivatives we have the representation
\begin{equation}\label{deltaurhok0proof3}
\begin{array}{ll}
 \frac{\partial^2}{\partial x_m\partial x_q}\delta v^{\rho,k+1,l}_i(\tau,x)=\\
\\
+\rho_l\int_{l-1}^{\tau}\int_{{\mathbb R}^n}\sum_j\frac{\partial}{\partial x_m}\left(\delta v^{\rho,k,l}_j\frac{\partial v^{\rho,k-1,l}}{\partial x_j} \right)(s,y)\frac{\partial}{\partial x_q}p^{k,l,*}(\tau,x,s,y)dyds\\ 
\\
+\rho_l\int_{l-1}^{\tau}\int_{{\mathbb R}^n}\int_{{\mathbb R}^n}K_{n,m}(z-y)\frac{\partial}{\partial x_i}{\Big (} \left( \sum_{j,p=1}^n\left( v^{\rho,k,l}_{p,j}+v^{\rho,k-1,l}_{p,j}\right)(\tau,y) \right) \times\\
\\
\left( \delta v^{\rho,k,l}_{j,p}(\tau,y) \right) {\Big)}\frac{\partial}{\partial x_q}p^{k,l,*}(\tau,x,s,z)dydsdydz,
\end{array}
\end{equation}
and where $p^{k,l,*}$ denotes the adjoint (consider also part I of this article). 
For $k=1$ we have the representation in (\ref{scalparasystlin10vproof}) such that 
\begin{equation}\label{scalparasystlin10vproof2}
\begin{array}{ll}
 v^{\rho,1,l}_i(\tau,x)-v^{\rho,0,l}(\tau,x)=\int_{{\mathbb R}^n}v^{\rho,l-1}_i(l-1,y)p^{0,l}(\tau,x,l-1,y)dy-v^{\rho,0,l}(\tau,x)\\
 \\
+\rho_l\int_{l-1}^{\tau}\int_{{\mathbb R}^n}\int_{{\mathbb R}^n}\sum_{i,j=1}^n \left( \frac{\partial v^{\rho,l-1}_i}{\partial x_j}\frac{\partial v^{\rho,l-1}_j}{\partial x_i}\right) (l-1,y)\frac{\partial}{\partial x_i}K_n(z-y)\times \\
\\
\times p^{0,l}(s,x,l-1,z)dzdyds.
\end{array}
\end{equation}
Well, we defined $v^{\rho,0,l}(\tau,x)=v^{\rho,l-1}(l-1,.)$. We could have defined $v^{\rho,0,l}_i$ such that the first summand on the right side of (\ref{scalparasystlin10vproof2}) cancels. Anyway classical analysis tells us hat there is a bound in the relevant norms. The essential term is the second summand which we may estimate using lemma \ref{poisson}. 
For the equation in (\ref{deltaurhok0proof3}) we get the following estimate (we provide more details of this estimate below). Note that we have for all $\tau >s$
\begin{equation}
\int_0^{\tau}{\Big|} \frac{C}{(\tau-s)^{(n+1)/2}}\exp\left(-\frac{\lambda(.)^2}{4(\tau-s)} \right){\Big |}_{L^1}ds\leq C
\end{equation}
for some constant $C>0$ wich is independent of $t-s$. This is due to the fact that locally we may use 
\begin{equation}
{\Big |} \frac{C}{(\tau-s)^{(n+1)/2}}\exp\left(-\frac{\lambda(.)^2}{4(\tau-s)} \right){\Big |}\leq \frac{c}{(t-s)^{\alpha}|x-y|^{n+1-2\alpha}}
\end{equation}
for some constant $c>0$ and some parameter $\alpha\in (0.5,1)$. In the complementary unbounded region it is clear that the time integral of the first spatial derivatives of the Gaussian has a uniform $L^1$-bound. First we observe that we may consider the spatial convolution first due to Fubini, and apply a Young inequality for fixed $t>s$. We get
\begin{equation}\label{deltaurhok0proof3}
\begin{array}{ll}
 {\Big |}\frac{\partial^2}{\partial x_m\partial x_q}\delta v^{\rho,k+1,l}_i(\tau,.){\Big |}_{L^2}\\
\\
\leq \rho_l\int_{l-1}^{\tau}{\Big |}\sum_j\frac{\partial}{\partial x_m}\left(\delta v^{\rho,k,l}_j\frac{\partial v^{\rho,k-1,l}_i}{\partial x_j}\right)(s,.){\Big |}_{L^2}{\Big |} \frac{C}{(\tau-s)^{(n+1)/2}}\exp\left(-\frac{\lambda(.)^2}{4(\tau-s)} \right){\Big |}_{L^1}ds\\ 
\\
+\rho_l\int_{l-1}^{\tau}{\Big |}\int_{{\mathbb R}^n}K_{n,m}(.-y){\Big (} \frac{\partial}{\partial x_i}\left( \sum_{j,p=1}^n\left( v^{\rho,k,l}_{p,j}+v^{\rho,k-1,l}_{p,j}\right)(s,y) \right) \times\\
\\
\left( \delta v^{\rho,k,l}_{j,p}(s,y) \right) {\Big)}dy{\Big |}_{L^2}{\Big|} \frac{C}{(\tau-s)^{(n+1)/2}}\exp\left(-\frac{\lambda(.)^2}{4(\tau-s)} \right){\Big |}_{L^1}ds
\end{array}
\end{equation}
Next we may use estimates for weighted $L^2$-products or an inductive assumption of boundedness of $\frac{\partial v^{\rho,k-1,l}_i}{\partial x_j}(s,.)$ and $\frac{\partial v^{\rho,k,l}_i}{\partial x_j}(s,.)$ in order to extract the function increments of form $\delta v^{\rho,k,l}_{j,p}$ and $\delta v^{\rho,k,l}_{j,p,k}$ in (\ref{deltaurhok0proof3}). We shall give more details of this estimate for weighted products below.
Another related method is to consider Sobolev product rules in $H^2$ for functions defined on ${\mathbb R}^3$, i.e. the rule that $|fg|_{H_2}\leq C_2|f|_{H^2}|g|_{H^2}$ for functions $f,g:{\mathbb R}^3\rightarrow {\mathbb R}$ along with $f,g\in H^2$. Note that all the terms
\begin{equation}
\begin{array}{ll}
\frac{\partial}{\partial x_i}\left( \sum_{j,p=1}^n\left( v^{\rho,k,l}_{p,j}+v^{\rho,k-1,l}_{p,j}\right)(s,y) \right)\left( \delta v^{\rho,k,l}_{j,p}(s,y) \right)\\
\\
=\left( \sum_{j,p=1}^n\left( v^{\rho,k,l}_{p,j,i}+v^{\rho,k-1,l}_{p,j,i}\right)(s,y)\left( \delta v^{\rho,k,l}_{j,p}(s,y) \right)\right) \\
\\
+\left( \sum_{j,p=1}^n\left( v^{\rho,k,l}_{p,j}+v^{\rho,k-1,l}_{p,j}\right)(s,y) \right)\left( \delta v^{\rho,k,l}_{j,p,i}(s,y) \right)
\end{array}
\end{equation}
appear in the classical $H^2$-Sobolev definition of ($s\in [l-1,l]$ fixed)
\begin{equation}
{\Big |}\sum_{j,p=1}^n\left( v^{\rho,k,l}_{p}+v^{\rho,k-1,l}_{p}\right)(s,.) \delta v^{\rho,k,l}_{j}(s,.) {\Big |}_{H^2}
\end{equation}
Hence we have 
\begin{equation}\label{deltaurhok0proof3prod}
\begin{array}{ll}
 {\Big |}\frac{\partial^2}{\partial x_m\partial x_q}\delta v^{\rho,k+1,l}_i(\tau,.){\Big |}_{L^2}\\
\\
\leq \rho_l\int_{l-1}^{\tau}{\Big |}\sum_j\left(\delta v^{\rho,k,l}_j v^{\rho,k-1,l}_i\right)(s,.){\Big |}_{H^2}{\Big |} \frac{C}{(\tau-s)^{(n+1)/2}}\exp\left(-\frac{\lambda(.)^2}{4(\tau-s)} \right){\Big |}_{L^1}ds\\ 
\\
+\rho_l\int_{l-1}^{\tau}{\Big |}\int_{{\mathbb R}^n}K_{n,m}(.-y){\Big (}  \sum_{j,p=1}^n\left( v^{\rho,k,l}_{p}+v^{\rho,k-1,l}_{p}\right)(s,y)  \times\\
\\
\delta v^{\rho,k,l}_{j}(s,y) {\Big)}dy{\Big |}_{H^2}{\Big|} \frac{C}{(\tau-s)^{(n+1)/2}}\exp\left(-\frac{\lambda(.)^2}{4(\tau-s)} \right){\Big |}_{L^1}ds\\
\\
\leq 
\rho_l\int_{l-1}^{\tau}C_2{\Big |}\sum_j\delta v^{\rho,k,l}_j(s,.){\Big |}_{H^2} 

{\Big |}v^{\rho,k-1,l}_i(s,.){\Big |}_{H^2}
{\Big |} \frac{C}{(\tau-s)^{(n+1)/2}}\exp\left(-\frac{\lambda(.)^2}{4(\tau-s)} \right){\Big |}_{L^1}ds\\ 
\\
+\rho_l\int_{l-1}^{\tau}{\Big |}\int_{{\mathbb R}^n}K_{n,m}(.-y){\Big (}  \sum_{j,p=1}^n\left( v^{\rho,k,l}_{p}+v^{\rho,k-1,l}_{p}\right)(s,y)  \times\\
\\
\delta v^{\rho,k,l}_{j}(s,y) {\Big)}dy{\Big |}_{H^2}{\Big|} \frac{C}{(\tau-s)^{(n+1)/2}}\exp\left(-\frac{\lambda(.)^2}{4(\tau-s)} \right){\Big |}_{L^1}ds
\end{array}
\end{equation}
In order to simplify the first term on the right side of (\ref{deltaurhok0proof3prod}) we may use the inductive assumption
\begin{equation}
{\Big |}v^{\rho,k-1,l}_i(s,.){\Big |}_{H^2}\leq C^l_{k-1}.
\end{equation}
In the global scheme the constants $C^l_k$ is a positive constant which depends linearly on $l$ but locally we have just a finite constant. There are several possibilities here, but all variations we have in mind use the fact that in dimension $n=3$ we have a localized Laplacian in $L^1$ which is in $L^2$ on the complementary domain (cf. Lemma \ref{poisson}, also for the definition of $\phi_{\epsilon}$ in the following).  
\begin{equation}\label{kernelesta}
\begin{array}{ll}
{\Big |}\int_{{\mathbb R}^n}K_{n,m}(.-y){\Big (}  \sum_{j,p=1}^n\left( v^{\rho,k,l}_{p}+v^{\rho,k-1,l}_{p}\right)(s,y)\delta v^{\rho,k,l}_{j}(s,y) {\Big)}dy{\Big |}_{H^2}\\
\\
={\Big |}\int_{{\mathbb R}^n}(\phi_{\epsilon}K_{n,m})(.-y){\Big (}  \sum_{j,p=1}^n\left( v^{\rho,k,l}_{p}+v^{\rho,k-1,l}_{p}\right)(s,y)\delta v^{\rho,k,l}_{j}(s,y) {\Big)}dy{\Big |}_{H^2}\\
\\
+{\Big |}\int_{{\mathbb R}^n}((1-\phi_{\epsilon})K_{n,m})(.-y){\Big (}  \sum_{j,p=1}^n\left( v^{\rho,k,l}_{p}+v^{\rho,k-1,l}_{p}\right)(s,y)\delta v^{\rho,k,l}_{j}(s,y) {\Big)}dy{\Big |}_{H^2}\\
\\
={\Big |}\int_{{\mathbb R}^n}(\phi_{\epsilon}K_{n,m})(y){\Big (}  \sum_{j,p=1}^n\left( v^{\rho,k,l}_{p}+v^{\rho,k-1,l}_{p}\right)(s,.-y)\delta v^{\rho,k,l}_{j}(s,.-y) {\Big)}dy{\Big |}_{H^2}\\
\\
+{\Big |}\int_{{\mathbb R}^n}((1-\phi_{\epsilon})K_{n,m})(y){\Big (}  \sum_{j,p=1}^n\left( v^{\rho,k,l}_{p}+v^{\rho,k-1,l}_{p}\right)(s,.-y)\delta v^{\rho,k,l}_{j}(s,.-y) {\Big)}dy{\Big |}_{H^2}
 \end{array}
\end{equation}
In the last step we used the convolution rule in order to make clear that the last to terms can be explictly written in terms of sums of $L^2$-norms where only the first order derivatives of the Laplacian kernel $K$ appear, i.e., we have representations by sums of $L^2$-norms such that the functions
\begin{equation}
((1-\phi_{\epsilon})K_{n,m})(.)\in L^2
\end{equation}
and
\begin{equation}\label{KnmL1}
(\phi_{\epsilon}K_{n,m})(.)\in L^1
\end{equation}
are untouched. As we said there a some variations of arguments possible now. Let us consider one first which exploits the $H^2$-product rule directly as far as possible. This method still uses the accompanying $H^{2,\infty}$-estimates which we shall consider below (and which are quite similar, in fact a little bit easier). There are other variations of argument that have the advantage that we can stay in $L^2$-theory and do not need additional but related arguments from $L^{\infty}$-theory. We shall consider these variations as well. 
For the second term on the right side of (\ref{deltaurhok0proof3prod}) we may use (\ref{KnmL1}) and the Young inequality, and writing the $|.|_{H^2}$ in Sobolev's classical sense as a sum of $L^2$ norms (the more modern sense may be the defininition via Fourier transforms) we obtain
\begin{equation}\label{ktrunc}
\begin{array}{ll}
{\Big |}\int_{{\mathbb R}^n}(\phi_{\epsilon}K_{n,m})(y){\Big (}  \sum_{j,p=1}^n\left( v^{\rho,k,l}_{p}+v^{\rho,k-1,l}_{p}\right)(s,.-y)\delta v^{\rho,k,l}_{j}(s,.-y) {\Big)}dy{\Big |}_{H^2}\\
\\
\leq C_K{\Big |}  \sum_{j,p=1}^n\left( v^{\rho,k,l}_{p}+v^{\rho,k-1,l}_{p}\right)(s,.)\delta v^{\rho,k,l}_{j}(s,.){\Big |}_{H^2}
\end{array}
\end{equation}
We may estimate the right side of (\ref{ktrunc}) using the $H^2$-product rule. We get
\begin{equation}\label{ktrunc}
\begin{array}{ll}
{\Big |}\int_{{\mathbb R}^n}(\phi_{\epsilon}K_{n,m})(y){\Big (}  \sum_{j,p=1}^n\left( v^{\rho,k,l}_{p}+v^{\rho,k-1,l}_{p}\right)(s,.-y)\delta v^{\rho,k,l}_{j}(s,.-y) {\Big)}dy{\Big |}_{H^2}\\
\\
\leq C_K2C^l_kC_22n^2\max_{j\in\left\lbrace 1,\cdots, n\right\rbrace }{\Big |}\delta v^{\rho,k,l}_{j}(s,.){\Big |}_{H^2}.
\end{array}
\end{equation}
For the last term on the right side of (\ref{deltaurhok0proof3prod}) note that with the function $\phi_{\epsilon}$ defined in Lemma \ref{poisson} above we have even
\begin{equation}\label{KH}
((1-\phi_{\epsilon})K_{n,m})(.)\in H^2,
\end{equation}
and we may introduce a constant $C_K$ as an upper bound, i.e.,
\begin{equation}
|((1-\phi_{\epsilon})K_{n,m})(.)|_{H^2}\leq C_K,
\end{equation}
Here we see that the matter is a more involved than in Lemma \ref{poisson} since we want to extract the functional increments. It makes sense to use Fourier transforms at this point which transform convolutions into products, but this cannot be done without caution, because we do not have that the function in (\ref{KH}) is in $L^1$ (or in $H^{2,1}$). However, we can give a weight to this function using inductive information that
\begin{equation}
 \sum_{j,p=1}^n\left( v^{\rho,k,l}_{p}+v^{\rho,k-1,l}_{p}\right)(s,.)\in H^2\cap C^2
\end{equation}
($C^2$ being the space of twice differentiable functions), and with an upper bound
\begin{equation}
{\Big |}  \max_{j,p\in \left\lbrace 1,\cdots n\right\rbrace} \left( v^{\rho,k,l}_{p}+v^{\rho,k-1,l}_{p}\right)(s,.){\Big |}_{H^2}\leq C_k+C_{k-1}\leq 2C_k
\end{equation}
we get 
\begin{equation}
\begin{array}{ll}
{\Big |}\int_{{\mathbb R}^n}((1-\phi_{\epsilon})K_{n,m})(y){\Big (}  \sum_{j,p=1}^n\left( v^{\rho,k,l}_{p}+v^{\rho,k-1,l}_{p}\right)(s,.-y)\delta v^{\rho,k,l}_{j}(s,.-y) {\Big)}dy{\Big |}_{H^2}\\
\\
\leq C_2C_K2C^l_kn^2\max_{j\in \left\lbrace 1,\cdots n\right\rbrace}{\Big |}  \delta v^{\rho,k,l}_{j}(s,.-y) {\Big)}dy{\Big |}_{H^2}
\end{array}
\end{equation}
We shall have a closer look at this argument below. In order to give an overview we work with a thick paint brush at the moment. But we shall give more details on the Fourier transform step below. Hence using these techniques we get for generic $C=C_2C_K2C^l_kn^2+C_K2C^l_kC_22n^2$
\begin{equation}\label{deltaurhok0proof4}
\begin{array}{ll}
 {\Big |}\frac{\partial^2}{\partial x_m\partial x_q}\delta v^{\rho,k+1,l}_i(\tau,.){\Big |}_{L^2}\\
\\
\leq \rho_l\int_{l-1}^{\tau}
C\max_{j,p\in \left\lbrace 1,\cdots ,n\right\rbrace }{\Big |}\delta v^{\rho,k,l}_{j,p}(s,.){\Big |}_{H^1}{\Big |} 
\frac{1}{(\tau-s)^{(n+1)/2}}\exp\left(-\frac{\lambda(.)^2}{4(\tau-s)} \right){\Big |}_{L^1}ds\\ 
\\
+\rho_l\int_{l-1}^{\tau}C\max_{j,p\in \left\lbrace 1,\cdots ,n\right\rbrace }| \delta v^{\rho,k,l}_{j,p}(s,.){\Big |}_{H^1}{\Big|} \frac{1}{(\tau-s)^{(n+1)/2}}\exp\left(-\frac{\lambda(.)^2}{4(\tau-s)} \right){\Big |}_{L^1}ds.
\end{array}
\end{equation}
Next we can take suprema and apply time intergability of $L^1$ norms of first derivatives of the Gaussian. We get
\begin{equation}\label{deltaurhok0proof5}
\begin{array}{ll}
 {\Big |}\frac{\partial^2}{\partial x_m\partial x_q}\delta v^{\rho,k+1,l}_i(\tau,.){\Big |}_{L^2}\\
\\
\leq \rho_l
C'C\left( \max_{j\in \left\lbrace 1,\cdots ,n\right\rbrace }\sup_{s\in [l-1,l]}{\Big |}\delta v^{\rho,k,l}_{j}(s,.){\Big |}_{H^1}\right) \\ 
\\
+\rho_lC'C\max_{j,p\in \left\lbrace 1,\cdots ,n\right\rbrace }\sup_{s\in [l-1,l]}| \delta v^{\rho,k,l}_{j,p}(s,.){\Big |}_{H^1}\\
\\
\leq \rho_l
C'C\max_{j\in \left\lbrace 1,\cdots ,n\right\rbrace }\sup_{s\in [l-1,l]}{\Big |}\delta v^{\rho,k,l}_j(s,.){\Big |}_{H^2}\\ 
\end{array}
\end{equation}
Let us summarize and have a closer look at this estimate where we take more attention to the Fourier transform part. For the second order derivatives we get products of the form
\begin{equation}
\left( \sum_{j,p=1}^n\left( v^{\rho,k,l}_{p,j,i}+v^{\rho,k-1,l}_{p,j,i}\right)(\tau,y) \right)
\left( \delta v^{\rho,k,l}_{j,p}(\tau,y) \right)
\end{equation}
and products of the form
\begin{equation}
\left( \sum_{j,p=1}^n\left( v^{\rho,k,l}_{p,j}+v^{\rho,k-1,l}_{p,j}\right)(\tau,y) \right)
\frac{\partial}{\partial x_i}\left( \delta v^{\rho,k,l}_{j,p}(\tau,y) \right)
\end{equation}
in the Leray projection term.
In order to estimate the latter Leray projection term (as an example) we  may split up the Laplacian kernel and use the estimate 
\begin{equation}
\begin{array}{ll}
{\Big |}\int_{{\mathbb R}^n}K_{n,m}(z-y){\Big (} \left( \sum_{j,p=1}^n\left( v^{\rho,k,l}_{p,j}+v^{\rho,k-1,l}_{p,j}\right)(\tau,y) \right) \times\\
\\
\frac{\partial}{\partial x_i}\left( \delta v^{\rho,k,l}_{j,p}(\tau,y) \right) {\Big)}dy{\Big |}_{L^2} C'\\
\\
\leq {\Big |}\int_{{\mathbb R}^n}\phi_{\epsilon}K_{n,m}(z-y){\Big (} \left( \sum_{j,p=1}^n\left( v^{\rho,k,l}_{p,j}+v^{\rho,k-1,l}_{p,j}\right)(\tau,y) \right) \times\\
\\
\frac{\partial}{\partial x_i}\left( \delta v^{\rho,k,l}_{j,p}(\tau,y) \right) {\Big)}dy{\Big |}_{L^2} C'\\
\\
+{\Big |}\int_{{\mathbb R}^n}(1-\phi_{\epsilon})K_{n,m}(z-y){\Big (} \left( \sum_{j,p=1}^n\left( v^{\rho,k,l}_{p,j}+v^{\rho,k-1,l}_{p,j}\right)(\tau,y) \right) \times\\
\\
\frac{\partial}{\partial x_i}\left( \delta v^{\rho,k,l}_{j,p}(\tau,y) \right) {\Big)}dy{\Big |}_{L^2} C'
\end{array}
\end{equation}
For the first term on the right side of the latter inequality we may use
\begin{equation}
{\big |}\phi_{\epsilon}K_{,i}{\big |}_{L^1}\leq C
\end{equation}
for some constant $C>0$, the product rule in $H^2$, and the generalized Young inequality. Estimation of the second term on the right side of the latter inequality we proceed is trickier.
However the estimates for second order derivatives are in fact easier. We can shift derivatives from the fundamental solution (using the adjoint) and estimate the term
\begin{equation}
\begin{array}{ll}
{\Big |}\int_{{\mathbb R}^n}\left( (1-\phi_{\epsilon})K_{n,i}(z-y)\right)_{,q,m} {\Big (} \left( \sum_{j,p=1}^n\left( v^{\rho,k,l}_{p,j}+v^{\rho,k-1,l}_{p,j}\right)(\tau,y) \right) \times\\
\\
\left( \delta v^{\rho,k,l}_{j,p}(\tau,y) \right) {\Big)}dy{\Big |}_{L^2},
\end{array}
\end{equation}
and this can be done along the lines discussed before since
\begin{equation}\label{thirdder}
\begin{array}{ll}
\left( (1-\phi_{\epsilon}(z-y))K_{n,i}(z-y)\right)_{,q,m}=\\
\\
 (1-\phi_{\epsilon}(z-y))K_{n,i,q,m}(z-y)+\\
\\
(-\phi_{\epsilon})_{,q}K_{n,i,m}(z-y)+(-\phi_{\epsilon})_{,q,m}K_{n,i}(z-y)
\end{array}
\end{equation}
The third derivatives of $K_n$ are in $L^1$, hence all summands on the right side of equation (\ref{thirdder}) are indeed in $L^1$, and we can proceed as before and use the generalized Young inequality and the $H^2$- product rule (in case of dimension $n=3$). This is sufficient for second derivatve terms as we estimated in (\ref{deltaurhok0proof3}). However, if we want to estimate terms of the form 
\begin{equation}
{\Big |}\frac{\partial}{\partial x_m}\delta v^{\rho,k+1,l}_i(\tau,.){\Big |}_{L^2},
\end{equation}
or of the form 
\begin{equation}
{\Big |}\delta v^{\rho,k+1,l}_i(\tau,.){\Big |}_{L^2},
\end{equation}
then the last argument is not sufficient since in the calculations there always appears a term  $(1-\phi_{\epsilon}(.))K_{n,i,q}(.)$ or $(1-\phi_{\epsilon}(.))K_{n,i}(.)$, and such terms are not in $L^1$. However, we can adapt this simple argument to $H^1$-estimates and $L^2$-estimate. Consider first order derivatives first. In this case we have to deal with expressions of the form
\begin{equation}\label{firstorderunc}
\begin{array}{ll}
{\Big |}\int_{{\mathbb R}^n}\left( (1-\phi_{\epsilon})K_{n,i}(z-y)\right)_{,q} {\Big (} \left( \sum_{j,p=1}^n\left( v^{\rho,k,l}_{p,j}+v^{\rho,k-1,l}_{p,j}\right)(\tau,y) \right) \times\\
\\
\left( \delta v^{\rho,k,l}_{j,p}(\tau,y) \right) {\Big)}dy{\Big |}_{L^2},
\end{array}
\end{equation}
where the derivative indexed by $q$ is from the first order derivative of the fundamental solution (or its adjoint). Now observe that
\begin{equation}\label{productrep}
\begin{array}{ll}
 \sum_{j,p=1}^n\left( v^{\rho,k,l}_{p,j}+v^{\rho,k-1,l}_{p,j}\right)(\tau,y) 
\left( \delta v^{\rho,k,l}_{j,p}(\tau,y) \right) \\
\\
=\left( \sum_{j,p=1}^n\left( v^{\rho,k,l}_{p}+v^{\rho,k-1,l}_{p}\right)(\tau,y) \right)
\left( \delta v^{\rho,k,l}_{j,p}(\tau,y) \right)_{,j}\\
\\
- \sum_{j,p=1}^n\left( v^{\rho,k,l}_{p}+v^{\rho,k-1,l}_{p}\right)(\tau,y) 
\left( \delta v^{\rho,k,l}_{j,p,j}(\tau,y)\right). 
\end{array}
\end{equation}
Concerning the first term on the right side of (\ref{productrep}) we may shift the derivative indexed by $j$ to the function $\left( (1-\phi_{\epsilon})K_{n,i}(.)\right)_{,q} $,
 and for the second term we use an inductively assumed  $H^{2,\infty}$ upper bound constant $2C^l_k$ of $\left( v^{\rho,k,l}_{p}+v^{\rho,k-1,l}_{p}\right)$, 
 and then shift one derivative of $ \delta v^{\rho,k,l}_{j,p,j}$ again to the function $\left( (1-\phi_{\epsilon})K_{n,i}(.)\right)_{,q} $
Hence, for (\ref{firstorderunc}) we have the upper bound
\begin{equation}\label{H1est}
\begin{array}{ll}
{\Big |}\int_{{\mathbb R}^n}\left( (1-\phi_{\epsilon})K_{n,i}(z-y)\right)_{,q} {\Big (} \left( \sum_{j,p=1}^n\left( v^{\rho,k,l}_{p,j}+v^{\rho,k-1,l}_{p,j}\right)(\tau,y) \right) \times\\
\\
\left( \delta v^{\rho,k,l}_{j,p}(\tau,y) \right) {\Big)}dy{\Big |}_{L^2}\\
\\
\leq {\Big |}\int_{{\mathbb R}^n}
\left( (1-\phi_{\epsilon})K_{n,i}(z-y)\right)_{,q,j} {\Big (}
 \left( \sum_{j,p=1}^n\left( v^{\rho,k,l}_{p}+v^{\rho,k-1,l}_{p}\right)(\tau,y) \right) 
\times\\
\\
\left( \delta v^{\rho,k,l}_{j,p}(\tau,y) \right) {\Big)}dy{\Big |}_{L^2}\\
\\
+{\Big |}\int_{{\mathbb R}^n}\left( (1-\phi_{\epsilon})K_{n,i}(z-y)\right)_{,q} {\Big (} \left( \sum_{j,p=1}^n\left( v^{\rho,k,l}_{p}+v^{\rho,k-1,l}_{p}\right)(\tau,y) \right) \times\\
\\
 \left( \delta v^{\rho,k,l}_{j,p,j}(\tau,y) \right) {\Big)}dy{\Big |}_{L^2}\\
 \\
 \leq {\Big |}\int_{{\mathbb R}^n}\left( (1-\phi_{\epsilon})K_{n,i}(z-y)\right)_{,q,j} {\Big (} \left( \sum_{j,p=1}^n\left( v^{\rho,k,l}_{p}+v^{\rho,k-1,l}_{p}\right)(\tau,y) \right) \times\\
 \\
 \left( \delta v^{\rho,k,l}_{j,p}(\tau,y) \right) {\Big)}dy{\Big |}_{L^2}\\
 \\
 +{\Big |}\int_{{\mathbb R}^n}\left( (1-\phi_{\epsilon})K_{n,i}(z-y)\right)_{,q,j} 2 n^2 C^l_k 
 \left( \delta v^{\rho,k,l}_{j,p}(\tau,y) \right) {\Big)}dy{\Big |}_{L^2}
\end{array}
\end{equation}
Then we can apply the argument above. Do we have enough derivatives to do the $L^2$ estimates by the same method ? Yes, we have. In this case we do not gain a derivative from the fundamental solution (or its adjoint), but look at the equation (\ref{productrep}) again. Using the inductively assumed $H^{2,\infty}$-upper bound (the correctness of the inductive assumption will be shown below), for the right side of (\ref{productrep}) we have the upper bound
\begin{equation}\label{productrep2}
\begin{array}{ll}
 |\sum_{j,p=1}^n\left( v^{\rho,k,l}_{p,j}+v^{\rho,k-1,l}_{p,j}\right)(\tau,y) 
\left( \delta v^{\rho,k,l}_{j,p}(\tau,y) \right)| \\
\\
\leq  \sum_{j,p=1}^n2C^l_k|
 \delta v^{\rho,k,l}_{j,p,j}(\tau,y) |
+| \sum_{j,p=1}^n2 C^l_k
\left( \delta v^{\rho,k,l}_{j,p,j}(\tau,y)\right)|, 
\end{array}
\end{equation}
and then we may shift both derivatives to the function
$\left( (1-\phi_{\epsilon})K_{n,i}(z-y)\right)$ and apply the same argument. Hence, this varaition of argument also holds for the $L^2$-estimates too.

There is an other method to deal  with these terms and in order to show this we have to go deeper into the $H^2$-product rule.
First for small $\delta>0$ we define
\begin{equation}
\psi_{\epsilon}(z):=\exp\left(-\delta z^2 \right),
\end{equation}
such that for $\delta >0$
\begin{equation}
\psi_{\delta}(.)(1-\phi_{\epsilon}(.))K_{n,i}(.)\in L^1,
\end{equation}
and
\begin{equation}
\lim_{\delta \downarrow 0}\psi_{\delta}(.)(1-\phi_{\epsilon}(.))K_{n,i}(.)=(1-\phi_{\epsilon}(.))K_{n,i}(.).
\end{equation}
Especially, we note that for $\delta >0$ the Fourier transform of    $\psi_{\delta}(.)(1-\phi_{\epsilon}(.))K_{n,i}(.)$ exists.
 We have to study the limit (as $\delta \downarrow 0$) of
\begin{equation}\label{plancherel}
\begin{array}{ll}
{\Big |}\int_{{\mathbb R}^n}\left( \psi_{\delta}(1-\phi_{\epsilon})K_{n,i}(z-y)\right) {\Big (} \left( \sum_{j,p=1}^n\left( v^{\rho,k,l}_{p,j}+v^{\rho,k-1,l}_{p,j}\right)(\tau,y) \right) \times\\
\\
\left( \delta v^{\rho,k,l}_{j,p}(\tau,y) \right) {\Big)}dy{\Big |}^2_{L^2}\\
\\
={\Big |}{\cal F}\left(\psi_{\delta} (1-\phi_{\epsilon})K_{n,i}(.)\right){\Big |}^2_{L^2} {\Big |}{\cal F}\left( \sum_{j,p=1}^n\left( v^{\rho,k,l}_{p,j}+v^{\rho,k-1,l}_{p,j}\right)(\tau,.) \right) \times\\
\\
\left( \delta v^{\rho,k,l}_{j,p}(\tau,.) \right) {\Big)}{\Big |}^2_{L^2}\\
\\
={\Big |}{\cal F}\psi_{\delta}\left( (1-\phi_{\epsilon})K_{n,i}(.)\right){\Big |}^2_{L^2} {\Big |}\left( \sum_{j,p=1}^n\left( v^{\rho,k,l}_{p,j}+v^{\rho,k-1,l}_{p,j}\right)(\tau,.) \right) \times\\
\\
\left( \delta v^{\rho,k,l}_{j,p}(\tau,.) \right) {\Big)}{\Big |}^2_{L^2}
\end{array}
\end{equation}
where we know that $\left( (1-\phi_{\epsilon})K_{n,i}(.)\right)_{,m}\in L^2$, and  $\delta v^{\rho,k,l}_{j,p}(\tau,.)\in L^2$, and  $\left( v^{\rho,k,l}_{p,j}+v^{\rho,k-1,l}_{p,j}\right)(\tau,.)\in L^2$ such that we can use the Plancerel formula (${\cal F}$ denotes the operator of Fourier transformation and ${\cal F}$ ist inverse). Next for the term
\begin{equation}
\begin{array}{ll}
 {\Big |}\left( \sum_{j,p=1}^n\left( v^{\rho,k,l}_{p,j}+v^{\rho,k-1,l}_{p,j}\right)(\tau,.) \right)
\left( \delta v^{\rho,k,l}_{j,p}(\tau,.) \right) {\Big)}{\Big |}^2_{L^2}\\
\\
={\Big |}{\cal F}^{-1}{\cal F}\left( \sum_{j,p=1}^n\left( v^{\rho,k,l}_{p,j}+v^{\rho,k-1,l}_{p,j}\right)(\tau,.) \right)
{\cal F}^{-1}{\cal F}\left( \delta v^{\rho,k,l}_{j,p}(\tau,.) \right) {\Big)}{\Big |}^2_{L^2}\\
\\
{\Big |}{\cal F}^{-1}\left( {\cal F}\left( \sum_{j,p=1}^n\left( v^{\rho,k,l}_{p,j}+v^{\rho,k-1,l}_{p,j}\right)(\tau,.) \right)
\star{\cal F}\left( \delta v^{\rho,k,l}_{j,p}(\tau,.) \right) {\Big)}\right) {\Big |}^2_{L^2}\\
\\
={\Big |}{\cal F}\left( \sum_{j,p=1}^n\left( v^{\rho,k,l}_{p,j}+v^{\rho,k-1,l}_{p,j}\right)(\tau,.) \right)
\star{\cal F}\left( \delta v^{\rho,k,l}_{j,p}(\tau,.) \right) {\Big)}{\Big |}^2_{L^2},
\end{array}
\end{equation}
where $\star$ denotes convolution.
Now, inductively $v^{\rho,k,l}_{p,j}+v^{\rho,k-1,l}_{p,j}$ and  $\delta v^{\rho,k,l}_{j,p}(\tau,.)$ are in $H^1$, hence we have for $s\leq 1$
\begin{equation}
\begin{array}{ll}
{\Big |}\frac{1}{\left(1+|.|^2\right)^s }{\cal F}\left( \sum_{j,p=1}^n\left( v^{\rho,k,l}_{p,j}+v^{\rho,k-1,l}_{p,j}\right)(\tau,.) \right)
\star (1+|.|^2)^s{\cal F}\left( \delta v^{\rho,k,l}_{j,p}(\tau,.) \right) {\Big)}{\Big |}^2_{L^2}\\
\\
\leq C_s^2{\Big |}{\cal F}\left( \sum_{j,p=1}^n\left( v^{\rho,k,l}_{p,j}+v^{\rho,k-1,l}_{p,j}\right)(\tau,.) \right){\Big |}^2_{L^2}
{\Big |}(1+|.|^2)^s{\cal F}\left( \delta v^{\rho,k,l}_{j,p}(\tau,.) \right) {\Big)}{\Big |}^2_{L^2},
\end{array}
\end{equation}
where we note that for
\begin{equation}
u(\xi)=\int(1+|y|^2)^{-s/2}v(\xi-y)w(y)dy
\end{equation}
and $v,w\in L^2$ we have
\begin{equation}
|u|_{L^2}\leq C_s|v|_{L^2}|w|_{L^2}.
\end{equation}
Hence,
\begin{equation}
\begin{array}{ll}
{\Big |}\frac{1}{\left(1+|.|^2\right)^s }{\cal F}\left( \sum_{j,p=1}^n\left( v^{\rho,k,l}_{p,j}+v^{\rho,k-1,l}_{p,j}\right)(\tau,.) \right)
\star (1+|.|^2)^s{\cal F}\left( \delta v^{\rho,k,l}_{j,p}(\tau,.) \right) {\Big)}{\Big |}_{L^2}\\
\\
\leq C_s{\Big |}{\cal F}\left( \sum_{p=1}^n\left( v^{\rho,k,l}_{p}+v^{\rho,k-1,l}_{p}\right)(\tau,.) \right){\Big |}_{H^1}
{\Big |} \delta v^{\rho,k,l}(\tau,.) {\Big)}{\Big |}_{H^2}.
\end{array}
\end{equation}
Summarizing, we have
\begin{equation}
\begin{array}{ll}
{\Big |}\int_{{\mathbb R}^n}\left( \psi_{\delta}(1-\phi_{\epsilon})K_{n,i}(z-y)\right) {\Big (} \left( \sum_{j,p=1}^n\left( v^{\rho,k,l}_{p,j}+v^{\rho,k-1,l}_{p,j}\right)(\tau,y) \right) \times\\
\\
\left( \delta v^{\rho,k,l}_{j,p}(\tau,y) \right) {\Big)}dy{\Big |}_{L^2}\\
\\
\leq {\Big |}{\cal F}\psi_{\delta}\left( (1-\phi_{\epsilon})K_{n,i}(.)\right){\Big |}_{L^2} {\Big |}\left( \sum_{j,p=1}^n\left( v^{\rho,k,l}_{p}+v^{\rho,k-1,l}_{p}\right)(\tau,.) \right){\Big |}_{H^1} \times\\
\\
{\Big |}\left( \delta v^{\rho,k,l}_{j,p}(\tau,.) \right) {\Big)}{\Big |}_{H^2}
\end{array}
\end{equation}
This leads to
\begin{equation}\label{last}
\begin{array}{ll}
{\Big |}\int_{{\mathbb R}^n}\left( (1-\phi_{\epsilon})K_{n,i}(z-y)\right) {\Big (} \left( \sum_{j,p=1}^n\left( v^{\rho,k,l}_{p,j}+v^{\rho,k-1,l}_{p,j}\right)(\tau,y) \right) \times\\
\\
\left( \delta v^{\rho,k,l}_{j,p}(\tau,y) \right) {\Big)}dy{\Big |}_{L^2}\\
\\
\leq {\Big |}\left( (1-\phi_{\epsilon})K_{n,i}(.)\right){\Big |}_{L^2} {\Big |}\left( \sum_{j,p=1}^n\left( v^{\rho,k,l}_{p}+v^{\rho,k-1,l}_{p}\right)(\tau,.) \right){\Big |}_{H^2} \times\\
\\
{\Big |}\left( \delta v^{\rho,k,l}_{j,p}(\tau,.) \right) {\Big)}{\Big |}_{H^2}
\leq C{\Big |}\left( \delta v^{\rho,k,l}_{j,p}(\tau,.) \right) {\Big)}{\Big |}_{H^2}
\end{array}
\end{equation}
where we may use that, we have a $L^2$-bound $\sup_{\delta >0}{\Big |}\psi_{\delta}\left( (1-\phi_{\epsilon})K_{n,i}(.)\right){\Big |}_{L^2}\leq C<\infty$ for some $C>0$, i.e., a  bound independent of $\delta >0$. Note that we have some freedom here since we replaced ${\Big |}\left( \sum_{j,p=1}^n\left( v^{\rho,k,l}_{p}+v^{\rho,k-1,l}_{p}\right)(\tau,.) \right){\Big |}_{H^1}$ by the stonger norm ${\Big |}\left( \sum_{j,p=1}^n\left( v^{\rho,k,l}_{p}+v^{\rho,k-1,l}_{p}\right)(\tau,.) \right){\Big |}_{H^2}$. So an alternative method is to examine the product of the right side of \ref{last} more closely and use the fact that we can factor out a weight $(1+|\xi|)^{-1}$ since the second factor is in $H^1$.

We still have to show that the inductive assumption concerning the $H^{2,\infty}$ estimates is justified (at least for some variations of arguments we made excessive use of this hypothesis), but let us assume for a moment that this has been shown in order to close the argument.
We have obtained
\begin{equation}\label{deltaurhok0proof5again}
\begin{array}{ll}
 {\Big |}\frac{\partial^2}{\partial x_m\partial x_q}\delta v^{\rho,k+1,l}_i(\tau,.){\Big |}_{L^2}\\
\\
\leq \rho_l
2C'C\max_{j\in \left\lbrace 1,\cdots ,n\right\rbrace }\sup_{s\in [l-1,l]}{\Big |}\delta v^{\rho,k,l}_j(s,.){\Big |}_{H^2}\\ 
\end{array}
\end{equation}
where $C=C_2C_K2C^l_kn^2+C_K2C^l_kC_22n^2$ for all $j,m,q\in \left\lbrace 1,\cdots ,n\right\rbrace$. Hence,
\begin{equation} 
\begin{array}{ll}
 \sum_{p,m=1}^n\max_{i\in \left\lbrace 1,\cdots ,n \right\rbrace }\sup_{\tau\in [l-1,l]}{\Big |}\frac{\partial^2}{\partial x_m\partial x_q}\delta v^{\rho,k+1,l}_i(\tau,.){\Big |}_{L^2}\\
\\
\leq \rho_l
2n^2C'C\max_{j\in \left\lbrace 1,\cdots ,n\right\rbrace }\sup_{s\in [l-1,l]}{\Big |}\delta v^{\rho,k,l}_j(s,.){\Big |}_{H^2}\\ 
\end{array}
\end{equation}
Assuming w.l.o.g. that all constants $C_2,C_K,C^l_k,C'\geq 1$ and that the upper bound constant $C'$ is also an upper bound for the integrated Gaussian itself (we assumed it to be an upper bound for the time-integrated first order spatial derivative of the Gaussian) we have an estimate with the same constants $\delta v^{\rho,k+1,l}_i(\tau,.)$ and its first spatial derivatives. The number of terms $n^2+n+1$ is bounded by $(n+1)^2$, hence we surely have 
\begin{equation} 
\begin{array}{ll}
 \max_{i\in \left\lbrace 1,\cdots ,n \right\rbrace }\sup_{\tau\in [l-1,l]}{\Big |}\delta v^{\rho,k+1,l}_i(\tau,.){\Big |}_{H^2}\\
\\
\leq \rho_l
2(n+1)^2C'C\max_{j\in \left\lbrace 1,\cdots ,n\right\rbrace }\sup_{s\in [l-1,l]}{\Big |}\delta v^{\rho,k,l}_j(s,.){\Big |}_{H^2} 
\end{array}
\end{equation}
Since the increment ${\Big |} \delta v^{\rho,1,l}{\Big |}_{H^2}$ can be assumed to be smaller the $\frac{1}{2}$ the constant $C^l:=2C^l_1$ is an upper bound for all constants $C^l_k$.
Hence we may choose
\begin{equation}
\rho_l\leq \frac{1}{4(n+1)^2C'\left( C_2C_K2C^ln^2+C_K2C^l_kC_22n^2\right) },
\end{equation}
and with this choice we get
\begin{equation}
\max_{j\in \left\lbrace 1,\cdots ,n\right\rbrace }\sup_{\tau\in [l-1,l]}{\Big |}\delta v^{\rho,k+1,l}_i(\tau,.){\Big |}_{H^2}\leq \frac{1}{2}\max_{j\in \left\lbrace 1,\cdots ,n\right\rbrace }\sup_{\tau\in [l-1,l]}
{\Big |} \delta v^{\rho,k,l}{\Big |}_{H^2}. 
\end{equation}
It is clear that choosing $\rho_l$ small if necessary we can put any positive bound on ${\Big |} \delta v^{\rho,1,l}{\Big |}_{H^2}$ in the first step.
Finally, we have to show that the inductive upper $H^{2,\infty}$-bound is correct. However, this is much simpler then the $H^2$ estimates. Consider 
(\ref{deltaurhok0proof3}) in $L^{\infty}$. For simplicity let $G^k_{l,q}$ denote a Gaussian majoriant of $\frac{\partial}{\partial x_q}p^{k,l,*}$ in (\ref{deltaurhok0proof3}). Fixing time (Fubini) and applying a generalized form of Young's inequality with $r=\infty$ and where we measure the Gaussian for fixed $t>s$ in $L^1$. Then we have 
\begin{equation}\label{deltaurhok0proof3againandagain}
\begin{array}{ll}
 \frac{\partial^2}{\partial x_m\partial x_q}\delta v^{\rho,k+1,l}_i(\tau,x)=\\
\\
+\rho_l\int_{l-1}^{\tau}{\Big |}
\sum_j\frac{\partial}{\partial x_m}
\left(\delta v^{\rho,k,l}_j\frac{\partial v^{\rho,k-1,l}}{\partial x_j} \right)(s,.){\Big |}_{L^{\infty}}{\Big |}G^k_{l,q}(t-s,.){\Big |}ds\\ 
\\
+\rho_l\int_{l-1}^{\tau}{\Big|}\int_{{\mathbb R}^n}K_{n,m}(.-y)\frac{\partial}{\partial x_i}{\Big (} \left( \sum_{j,p=1}^n\left( v^{\rho,k,l}_{p,j}+v^{\rho,k-1,l}_{p,j}\right)(\tau,y) \right) \times\\
\\
\left( \delta v^{\rho,k,l}_{j,p}(\tau,y) \right) {\Big)}dy{\Big |}_{L^{\infty}}{\Big |}G^k_{l,q}(t-s,.){\Big |}_{L^1},
\end{array}
\end{equation}
This means that we have extract the function increments $\delta v^{\rho,k,l}_{j,p}$ from the expressions
\begin{equation}\label{linfty1}
{\Big |}
\sum_j\frac{\partial}{\partial x_m}
\left(\delta v^{\rho,k,l}_j\frac{\partial v^{\rho,k-1,l}_i}{\partial x_j} \right)(s,.){\Big |}_{L^{\infty}},
\end{equation}
and from the expression
\begin{equation}
\begin{array}{ll}
{\Big|}\int_{{\mathbb R}^n}K_{n,m}(.-y)\frac{\partial}{\partial x_i}{\Big (} \left( \sum_{j,p=1}^n\left( v^{\rho,k,l}_{p,j}+v^{\rho,k-1,l}_{p,j}\right)(\tau,y) \right) \times\\
\\
\left( \delta v^{\rho,k,l}_{j,p}(\tau,y) \right) {\Big)}dy{\Big |}_{L^{\infty}}.
\end{array}
\end{equation}
For the first term (\ref{linfty1}) we have
\begin{equation}\label{linfty1}
\begin{array}{ll}
{\Big |}
\sum_j\frac{\partial}{\partial x_m}
\left(\delta v^{\rho,k,l}_j\frac{\partial v^{\rho,k-1,l}_i}{\partial x_j} \right)(s,.){\Big |}_{L^{\infty}}\\
\\
\leq {\Big |}
\sum_j\frac{\partial v^{\rho,k-1,l}_i}{\partial x_j} (s,.){\Big |}_{L^{\infty}}
\max{j\in \left\lbrace 1,\cdots, n\right\rbrace }{\Big |}\delta v^{\rho,k,l}_j(s,.){\Big |}_{H^{1,\infty}}\\
\\
\leq n {\Big |}
\max{i,j\in \left\lbrace 1,\cdots, n\right\rbrace }\frac{\partial v^{\rho,k-1,l}_i}{\partial x_j} (s,.){\Big |}_{L^{\infty}}
\max{j\in \left\lbrace 1,\cdots, n\right\rbrace }{\Big |}\delta v^{\rho,k,l}_j(s,.){\Big |}_{H^{1,\infty}}
\end{array}
\end{equation}
and for the second term
\begin{equation}
\begin{array}{ll}
{\Big|}\int_{{\mathbb R}^n}K_{n,m}(.-y)\frac{\partial}{\partial x_i}{\Big (} \left( \sum_{j,p=1}^n\left( v^{\rho,k,l}_{p,j}+v^{\rho,k-1,l}_{p,j}\right)(\tau,y) \right) \times\\
\\
\left( \delta v^{\rho,k,l}_{j,p}(\tau,y) \right) {\Big)}dy{\Big |}_{L^{\infty}}\\
\\
\leq {\Big|}\int_{{\mathbb R}^n}K_{n,m}(.-y)\frac{\partial}{\partial x_i}{\Big (} \left( \sum_{j,p=1}^n\left( v^{\rho,k,l}_{p,j}+v^{\rho,k-1,l}_{p,j}\right)(\tau,y) \right)
 {\Big)}dy{\Big |}_{L^{\infty}}\times\\
 \\
 {\Big |}\delta v^{\rho,k,l}_{j,p}(\tau,y) {\Big |}_{L^{\infty}}
\end{array}
\end{equation}
So we are left with the measure of a convolution in $L^{\infty}$ of the form
\begin{equation}
 {\Big|}\int_{{\mathbb R}^n}K_{n,m}(.-y)\frac{\partial}{\partial x_i}{\Big (} \left( \sum_{j,p=1}^n\left( v^{\rho,k,l}_{p,j}+v^{\rho,k-1,l}_{p,j}\right)(\tau,y) \right)
 {\Big)}dy{\Big |}_{L^{\infty}}
\end{equation}
Again this can be done by splitting up the kernel $\phi_{\epsilon}K_{n,m}(.)\in L^1$ and $(1-\phi_{\epsilon})K_{n,m}(.)\in L^2$, so the inductive assumptions that
\begin{equation}
{}\frac{\partial}{\partial x_i}\left( \sum_{j,p=1}^n\left( v^{\rho,k,l}_{p,j}+v^{\rho,k-1,l}_{p,j}\right)(\tau,.) \right)
 {\Big |}_{L^{\infty}}\leq C^l_k\leq C^l
\end{equation}
together with
\begin{equation}
{}\frac{\partial}{\partial x_i}\left( \sum_{j,p=1}^n\left( v^{\rho,k,l}_{p,j}+v^{\rho,k-1,l}_{p,j}\right)(\tau,.) \right)
 {\Big |}_{L^{2}}\leq C^l_k\leq C^l
\end{equation}
are indeed sufficient. 

\end{proof}

Although some parts of the argument (such as the local $L^1$-integrability of the first order derivatives of the Laplacian kernel) are valid only in dimension $n=3$, we note that we may extend this lemma by similar methods to Sobolev spaces for $n>3$ and to stronger Sobolev spaces. The extension to stronger Sobolev spaces is especially easy if we consider the $H^m$ estimates only at integer time points $l\geq 1$ which is sufficient if we add a little classical regularity theory for the appoximating functionals. We denote the extension for the controled scheme and prove it again in the case $n=3$.
\begin{lem}\label{lemmagen}
Let $n=3$ and let $v^{r,\rho,l-1}_i(l-1,.)\in H^m\cap H^{2,\infty}$ for $m\geq 2$
For $l\geq 1$ there is a time step size $\rho_l\sim \frac{1}{l}$ depending otherwise only on dimesnion and viscosity (explicit description in the proof) such that we have for all $\tau\in [l-1,l]$
\begin{equation}
v^{r,\rho,0,l}_i(\tau,.)\in H^m~\mbox{and}~\delta v^{r,\rho,k,l}(\tau,.)\in H^m
\end{equation}
and
\begin{equation}
\max_{i\in \left\lbrace 1,\cdots ,n\right\rbrace }\sup_{\tau\in [l-1,l]}|\delta v^{r,\rho,0,l}_i(\tau,.)|_{H^m}\leq \frac{1}{2},
\end{equation}
and for $k\geq 1$
\begin{equation}
\max_{i\in \left\lbrace 1,\cdots ,n\right\rbrace }\sup_{\tau\in [l-1,l]}|\delta v^{r,\rho,k,l}_i(\tau,.)|_{H^m}\leq \frac{1}{2}\max_{i\in \left\lbrace 1,\cdots ,n\right\rbrace }\sup_{\tau\in [l-1,l]}|\delta v^{r,\rho,k-1,l}_i(\tau,.)|_{H^m}
\end{equation}
for all $1\leq i\leq n$ and all $k\geq 0$. Furthermore similar estimates hold for $H^{2,\infty}$-spaces, i.e., we have
\begin{equation}
v^{r,\rho,0,l}_i(\tau,.)\in H^{2,\infty}~\mbox{and}~\delta v^{r,\rho,k,l}(\tau,.)\in H^{2,\infty}
\end{equation}
and
\begin{equation}
\max_{i\in \left\lbrace 1,\cdots ,n\right\rbrace }\sup_{\tau\in [l-1,l]}|\delta v^{r,\rho,0,l}_i(\tau,.)|_{H^{2,\infty}}\leq \frac{1}{2},
\end{equation}
and for $k\geq 1$
\begin{equation}
\max_{i\in \left\lbrace 1,\cdots ,n\right\rbrace }\sup_{\tau\in [l-1,l]}|\delta v^{r,\rho,k,l}_i(\tau,.)|_{H^{2,\infty}}\leq \frac{1}{2}\max_{i\in \left\lbrace 1,\cdots ,n\right\rbrace }\sup_{\tau\in [l-1,l]}|\delta v^{r,\rho,k-1,l}_i(\tau,.)|_{H^{2,\infty}}.
\end{equation}

\end{lem}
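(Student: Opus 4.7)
The plan is to mirror the contraction argument of Lemma \ref{contrlem}, now applied to the increments of the iterative scheme for the controlled equation (\ref{Navleraycontrolledl}), and to upgrade the Sobolev exponent from $2$ to general $m\geq 2$ using the product rule $|fg|_{H^m}\leq C_m |f|_{H^m}|g|_{H^m}$ valid for $m>\tfrac{n}{2}$ with $n=3$. First I would write down the iterative scheme: the zeroth-order approximation $v^{r,\rho,0,l}_i$ is defined analogously to (\ref{scalparasystlin10v}) but with the additional source terms coming from the control function $r^l_i$ (its time derivative, Laplacian, the first-order terms $r^l_j \partial_{x_j}v^{r,\rho,l}_i$ and $v^{r,\rho,l}_j\partial_{x_j}r^l_i$, the quadratic term $r^l_j\partial_{x_j}r^l_i$, and the two correction integrals containing $r^l_{j,k}$ in (\ref{Navleraycontrolledl})). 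The increments $\delta v^{r,\rho,k,l}_i$ are then obtained by differencing consecutive iterates; in these differences the terms containing only $r^l_i$ cancel, and the mixed terms are multiplied by factors that are either increments $\delta v^{r,\rho,k,l}_j$ or functions from $R$, which are uniformly bounded on $[l-1,l]\times\mathbb{R}^n$ together with their first two spatial derivatives.

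Next, I would write the classical representation of each $\delta v^{r,\rho,k,l}_i$ in terms of the fundamental solution $p^{k,l}$ of the scalar linearized equation with first-order coefficient $v^{r,\rho,k,l}_j$; the fundamental solution exists by Lemma \ref{lemgauss} because $v^{r,\rho,k,l}_j\in H^m\subset C^\alpha$ holds by the inductive hypothesis and Sobolev embedding. As in (\ref{deltaurhok0proof3}), the second-order spatial derivatives are represented using the adjoint $p^{k,l,*}$ of Lemma \ref{lemgaussad}. The mixed control terms contribute to the right-hand side a finite sum of expressions of the type already treated in Lemma \ref{contrlem}, now with the velocity factors replaced by velocity-plus-control, so that the bounded $R$-norm of $r^l$ enters only as a multiplicative constant in the prefactor.

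For the Leray-projection integral I would split the kernel $K_{n,i}=\phi_\epsilon K_{n,i}+(1-\phi_\epsilon)K_{n,i}$, exactly as in Lemma \ref{poisson}, using $\phi_\epsilon K_{n,i}\in L^1$ and $(1-\phi_\epsilon)K_{n,i}\in L^2$ for $n=3$. The $H^m$-estimate is then obtained by combining the generalized Young inequality (\ref{Y1})--(\ref{Y2}) with the $H^m$-product rule and, when needed, shifting one or two spatial derivatives onto the kernel factor via partial integration (as in (\ref{thirdder})), so that the resulting integrals reduce to convolutions of $L^1$ and $L^2$ factors whose $L^2$-norm is controlled by $|\delta v^{r,\rho,k-1,l}|_{H^m}$. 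For $m>2$ this argument transfers verbatim because the additional derivatives are absorbed by the product rule on the Sobolev side while no further demand is put on the kernel. The $H^{2,\infty}$-estimate is easier: the generalized Young inequality (\ref{Y4}) with $r=\infty$, $p=q=2$ outside $B_\epsilon(0)$ and $p=\infty,\,q=1$ inside $B_\epsilon(0)$ replaces the $L^1$--$L^2$ duality, and closure of the induction uses the assumed $H^{2,\infty}$-bound on $v^{r,\rho,l-1}_i(l-1,.)$ in exactly the same way that the final part of the proof of Lemma \ref{contrlem} uses the $H^{2,\infty}$ inductive hypothesis to extract the increments from products like $v^{r,\rho,k,l}_{p,j}\,\delta v^{r,\rho,k,l}_{j,p}$.

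The main obstacle, as in the proof of Lemma \ref{contrlem}, is bookkeeping the constants in order to choose $\rho_l$ of order $1/l$ such that the cumulative prefactor is at most $\tfrac{1}{2}$. Collecting constants from (i) the time-integrated $L^1$-norm of the first spatial derivative of the Gaussian majorant from Lemmas \ref{lemgauss}--\ref{lemgaussad}, (ii) the product-rule constant $C_m$, (iii) the kernel-splitting constants from Lemma \ref{poisson}, and (iv) the uniform $R$-norm of $r^l$ and its derivatives, one obtains a bound of the form $|\delta v^{r,\rho,k,l}_i|_{H^m}\leq \rho_l\,C(n,\nu,m,C^l,|r^l|_R)\,|\delta v^{r,\rho,k-1,l}_i|_{H^m}$, and an analogous bound in $H^{2,\infty}$ with the same dependence. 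Since the velocity bound $C^l$ grows at most linearly in $l$, the choice $\rho_l\sim\tfrac{1}{l}$ with an appropriately small implicit constant makes both prefactors $\leq\tfrac{1}{2}$ simultaneously. The delicate point is that the $H^m$-contraction and the $H^{2,\infty}$-contraction must be established together, since several $H^m$-estimates (notably those involving $\partial^2 v\cdot \delta v$ arising from the Leray projection term after differentiation) rely on the $H^{2,\infty}$-bound of the velocity; this is handled by running one joint induction over $k$ in which both hypotheses are propagated at each substep.
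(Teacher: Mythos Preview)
Your proposal is correct and follows essentially the same route as the paper's own proof: representation of the increments via the fundamental solution $p^{k,l}$ and its adjoint, the kernel splitting $K_{n,i}=\phi_\epsilon K_{n,i}+(1-\phi_\epsilon)K_{n,i}$, the combination of the generalized Young inequality with the $H^m$-product rule, and a joint induction in $H^m$ and $H^{2,\infty}$ closed by choosing $\rho_l\sim 1/l$. The only point where the paper is more explicit is that it begins the proof by spelling out the construction of the control function $r^l_i$ via the auxiliary problem for $v^{*,\rho,1,l}_i$ (so that $v^{r,\rho,1,l}_i=v^{r,\rho,l-1}_i(l-1,\cdot)$), whereas you treat $r^l$ as already given with bounded $R$-norm; for the contraction estimate itself this makes no difference, since only the boundedness of $r^l$ and its derivatives enters the increment bounds.
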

\begin{proof}
First we describe the controlled scheme. At each time step $l$ we have a functional series
\begin{equation}
\left( v^{r,\rho,k,l}_i\right)_{k\in {\mathbb N}},
\end{equation}
where for $k+1\geq 1$ the functions $v^{r,\rho,k,l}_i=v^{\rho,k,l}_i+r^l_i$ satisfy the Cauchy equation
\begin{equation}\label{Navlerayrkl}
\left\lbrace \begin{array}{ll}
\frac{\partial v^{r,\rho,k+1,l}_i}{\partial \tau}-\rho_l\nu\sum_{j=1}^n \frac{\partial^2 v^{r,\rho,k+1,l}_i}{\partial x_j^2} 
+\rho_l\sum_{j=1}^n v^{r,\rho,k,l}_j\frac{\partial v^{r,\rho,k+1,l}_i}{\partial x_j}=\\
\\
\frac{\partial r^l_i}{\partial \tau}-\rho_l\nu\sum_{j=1}^n \frac{\partial^2 r^l_i}{\partial x_j^2} 
+\rho_l\sum_{j=1}^n r^l_j\frac{\partial v^{r,\rho,k+1,l}_i}{\partial x_j}+\rho_l\sum_{j=1}^n v^{r,\rho,k+1,l}_j\frac{\partial r^l_i}{\partial x_j}\\
\\
+\rho_l\sum_{j=1}^n r^l_j\frac{\partial r^l_i}{\partial x_j}+\rho_l\int_{{\mathbb R}^n}\left( \frac{\partial}{\partial x_i}K_n(x-y)\right) \sum_{j,k=1}^n\left( \frac{\partial v^{r,\rho,k,l}_k}{\partial x_j}\frac{\partial v^{r,\rho,k,l}_j}{\partial x_k}\right) (\tau,y)dy,\\
\\
-2\rho_l\int_{{\mathbb R}^n}\left( \frac{\partial}{\partial x_i}K_n(x-y)\right) \sum_{j,k=1}^n\left( \frac{\partial v^{r,\rho,k,l}_k}{\partial x_j}\frac{\partial r^l_j}{\partial x_k}\right) (\tau,y)dy\\
\\
-\rho_l\int_{{\mathbb R}^n}\left( \frac{\partial}{\partial x_i}K_n(x-y)\right) \sum_{j,k=1}^n\left( \frac{\partial r^l_k}{\partial x_j}\frac{\partial r^l_j}{\partial x_k}\right) (\tau,y)dy,\\
\\
\mathbf{v}^{r,\rho,k+1,l}(l-1,.)=\mathbf{v}^{r,\rho,k,l}(l-1,.).
\end{array}\right.
\end{equation}

As described in the introduction, having computed $v^{r,\rho,l-1}_i(l-1,.)$ we first determine $v^{*,\rho,1,l}_i,~1\leq i\leq n$ via the linear equation 
\begin{equation}\label{Navlerayuncontrolledl}
\left\lbrace \begin{array}{ll}
\frac{\partial v^{*,\rho,1,l}_i}{\partial \tau}-\rho_l\nu\sum_{j=1}^n \frac{\partial^2 v^{r,\rho,1,l}_i}{\partial x_j^2} 
+\rho_l\sum_{j=1}^n v^{r,\rho,l-1}_j\frac{\partial v^{*,\rho,1,l}_i}{\partial x_j}=\\
\\+\rho_l\int_{{\mathbb R}^n}\left( \frac{\partial}{\partial x_i}K_n(x-y)\right) \sum_{j,k=1}^n\left( \frac{\partial v^{*,\rho,l-1}_k}{\partial x_j}\frac{\partial v^{\rho,l-1}_j}{\partial x_k}\right) (l-1,y)dy,\\
\\
\mathbf{v}^{*,\rho,1,l}(l-1,.)=\mathbf{v}^{r,\rho,l-1}(l-1,.).
\end{array}\right.
\end{equation}
Inductively, we have $v^{r,\rho,l-1}_i(l-1,.)\in C^2$, hence, classical theory of linear parabolic equations tells us that
\begin{equation}
v^{r,\rho,l}_i\in C^{1,2}\left([l-1,l]\times {\mathbb R}^n\right). 
\end{equation}
Then we define
\begin{equation}\label{controll}
r^l_i(.,.)=r^{l-1}_i-\left(v^{*,\rho,1,l}_i(.,.)-v^{r,\rho,l-1}_i(l-1,.)\right).
\end{equation}
Again, inductively we have $r^{l-1}_i(l-1,.)\in C^2$, hence we have 
\begin{equation}
r^l_i\in C^{1,2}\left([l-1,l]\times {\mathbb R}^n\right).
\end{equation}
With the control function $r^l_i$ defined in (\ref{controll}) we have to prove the contraction in $L^{\infty}\times H^2$ for the functional increments $\delta v^{r,\rho,k,l}_i,~1\leq i\leq n$, which satisfy the equation
\begin{equation}\label{Navleraycontrolledl}
\left\lbrace \begin{array}{ll}
\frac{\partial \delta v^{r,\rho,k,l}_i}{\partial \tau}-\rho_l\nu\sum_{j=1}^n \frac{\partial^2 \delta v^{r,\rho,k,l}_i}{\partial x_j^2} 
+\rho_l\sum_{j=1}^n v^{r,\rho,k-1,l}_j\frac{\partial \delta v^{r,\rho,k,l}_i}{\partial x_j}\\
\\
=-\rho_l\sum_{j=1}^n \delta v^{r,\rho,k-1,l}_j\frac{\partial  v^{r,\rho,k-1,l}_i}{\partial x_j}
+\rho_l\sum_{j=1}^n r^l_j\frac{\partial \delta v^{r,\rho,k,l}_i}{\partial x_j}
+\rho_l\sum_{j=1}^n \delta v^{r,\rho,k-1,l}_j\frac{\partial r^l_i}{\partial x_j}\\
\\+\rho_l\int_{{\mathbb R}^n}K_{n,i}(x-y){\Big (} \left( \sum_{j,k=1}^n\left( v^{\rho,k,l}_{k,j}+v^{\rho,k-1,l}_{k,j}\right)(\tau,y) \right)\left( \delta v^{\rho,k,l}_{j,k}(\tau,y) \right) {\Big)}dy,\\
\\
-2\rho_l\int_{{\mathbb R}^n}\left( \frac{\partial}{\partial x_i}K_n(x-y)\right) \sum_{j,k=1}^n\left( \frac{\partial \delta v^{r,\rho,k-1,l}_k}{\partial x_j}\frac{\partial r^l_j}{\partial x_k}\right) (\tau,y)dy\\
\\
\delta\mathbf{v}^{r,\rho,k,l}(l-1,.)=0.
\end{array}\right.
\end{equation}
From classical theory of scalar linear parabolic equations and Sobolev embedding we have for all $\tau\in [l-1,l]$ that
\begin{equation}\label{serk}
v^{r,\rho,k,l}_j(\tau,.)=v^{r,\rho,1,l}_j(\tau,.)+\sum_{m=2}^k\delta v^{r,\rho,m,l}(\tau,.)\in H^2\subset C^{\alpha}
\end{equation}
for $\alpha\in (0,0.5)$ and uniformly with respect to $\tau\in [l-1,l]$. Moreover, we know inductively that $v^{r,\rho,k,l}_j(\tau,.)\in C^2$ for all $\tau\in [l-1,l]$. For $k=1$ this is by definition of the control function. As in the case of the uncontolled scheme considered above inductively with respect to the subiteration index $k$ we know that the fundamental solution $p^{k,l}$ of
\begin{equation}
\frac{\partial p^{k,l}}{\partial \tau}-\rho_l \nu\sum_{j=1}^n \frac{\partial^2 \delta p^{k,l}}{\partial x_j^2} 
+\rho_l\sum_{j=1}^n v^{\rho,k,l}_j\frac{\partial p^{k,l}}{\partial x_j}=0
\end{equation}
exists, and it follows that the solution of the linear problem (\ref{Navleraycontrolledl}) has the representation
\begin{equation}\label{deltaurhok0proofr}
\begin{array}{ll}
 \delta v^{r,\rho,k+1,l}_i(\tau,x)=\\
\\
-\rho_l\int_{l-1}^{\tau}\int_{{\mathbb R}^n}\sum_j\left(\delta v^{r,\rho,k-1,l}_j\frac{\partial v^{r,\rho,k-1,l}_i}{\partial x_j} \right)(s,y)p^{k,l}(\tau,x,s,y)dyds\\ 
\\
+\rho_l\int_{l-1}^{\tau}\int_{{\mathbb R}^n}\int_{{\mathbb R}^n}K_{n,i}(z-y){\Big (} \left( \sum_{j,k=1}^n\left( v^{r,\rho,k,l}_{k,j}+v^{r,\rho,k-1,l}_{k,j}\right)(\tau,y) \right) \times\\
\\
\left( \delta v^{r,\rho,k,l}_{j,k}(\tau,y) \right) {\Big)}p^{k,l}(\tau,x,s,z)dydzds\\
\\
+\rho_l\int_{l-1}^{\tau}\int_{{\mathbb R}^n}\left( \sum_{j=1}^n r^l_j\frac{\partial \delta v^{r,\rho,k,l}_i}{\partial x_j}(s,y)\right) p^{k,l}(\tau,x,s,y)dyds\\
\\
+\rho_l\int_{l-1}^{\tau}\int_{{\mathbb R}^n}\left(\sum_{j=1}^n \delta v^{r,\rho,k-1,l}_j\frac{\partial r^l_i}{\partial x_j}\right)(s,y)p^{k,l}(\tau,x,s,z)dyds\\
\\
-2\rho_l\int_{l-1}^{\tau}\int_{{\mathbb R}^n}\int_{{\mathbb R}^n}\left( \frac{\partial}{\partial x_i}K_n(z-y)\right) \sum_{j,k=1}^n\left( \frac{\partial \delta v^{r,\rho,k-1,l}_k}{\partial x_j}\frac{\partial r^l_j}{\partial x_k}\right) (\tau,y)dy\times \\
\\
\times p^{k,l}(\tau,x,s,z)dzds.
\end{array}
\end{equation}
As in the case of the uncontrolled scheme we may differentiate under the integral such that we get the representation for  first order derivative $\delta v^{\rho,k+1,l}_{i,m}$ (Einstein notation) is obtained by replacing the fundamental solution $p^{k,l}$ in (\ref{deltaurhok0proofr}) by $p^{k,l}_{,m}$. 
Similarly we get the second order derivatives we have the representation by replacing first order derivatives $p^{k,l}_{,m}$ of the fundamental solution  $p^{k,l}$ by the adjoint$p^{k,l,*}_{,m}$ and adding one derivative at the other factors while shifting in the presence of Laplacian kernels. More precisely, we have 
\begin{equation}\label{deltaurhok0proof3r}
\begin{array}{ll}
 \frac{\partial^2}{\partial x_m\partial x_q}\delta v^{r,\rho,k+1,l}_i(\tau,x)=\\
\\
+\rho_l\int_{l-1}^{\tau}\int_{{\mathbb R}^n}\sum_j\frac{\partial}{\partial x_m}\left(\delta v^{r,\rho,k,l}_j\frac{\partial v^{r,\rho,k-1,l}}{\partial x_j} \right)(s,y)\frac{\partial}{\partial x_q}p^{k,l,*}(\tau,x,s,y)dyds\\ 
\\
+\rho_l\int_{l-1}^{\tau}\int_{{\mathbb R}^n}\int_{{\mathbb R}^n}K_{n,m}(z-y)\frac{\partial}{\partial x_i}{\Big (} \left( \sum_{j,p=1}^n\left( v^{r,\rho,k,l}_{p,j}+v^{r,\rho,k-1,l}_{p,j}\right)(\tau,y) \right) \times\\
\\
\left( \delta v^{r,\rho,k,l}_{j,p}(\tau,y) \right) {\Big)}\frac{\partial}{\partial x_q}p^{k,l,*}(\tau,x,s,z)dydsdydz\\
\\
+\rho_l\int_{l-1}^{\tau}\int_{{\mathbb R}^n}\left( \sum_{j=1}^n r^l_j\frac{\partial \delta v^{r,\rho,k,l}_i}{\partial x_j}(s,y)\right) p^{k,l,*}_{,q}(\tau,x,s,y)dyds\\
\\
+\rho_l\int_{l-1}^{\tau}\int_{{\mathbb R}^n}\left(\sum_{j=1}^n \delta v^{r,\rho,k-1,l}_j\frac{\partial r^l_i}{\partial x_j}\right)(s,y)p^{k,l,*}_{,q}(\tau,x,s,z)dyds\\
\\
-2\rho_l\int_{l-1}^{\tau}\int_{{\mathbb R}^n}\int_{{\mathbb R}^n}\left( \frac{\partial}{\partial x_i}K_n(z-y)\right) \sum_{j,k=1}^n\left( \frac{\partial \delta v^{r,\rho,k-1,l}_k}{\partial x_j}\frac{\partial r^l_j}{\partial x_k}\right) (s,y)dy\times \\
\\
\times p^{k,l,*}_{,q}(\tau,x,s,z)dzds.
\end{array}
\end{equation}
and where $p^{k,l,*}$ denotes the adjoint (consider also part I of this article). From these represenations we can prove contraction by the use of 
classical analysis ans some modest Sobolev theory. Let us first consider $H^1$ estimates.
Locally around the origin we may still use the standard a priori estimate for first order spatial derivatives of the Gaussian majorant of the adjoint of the fundamental so0lution, i.e.,
\begin{equation}
{\Big |} \frac{C}{(\tau-s)^{(n+1)/2}}\exp\left(-\frac{\lambda(.)^2}{4(\tau-s)} \right){\Big |}\leq \frac{c}{(t-s)^{\alpha}|x-y|^{n+1-2\alpha}}
\end{equation}
which holds for some constant $c>0$ and some parameter $\alpha\in (0.5,1)$. It is clear that a stronger local estimate holds for for the Gaussian itself.
On a domain which is  the complement of a ball around the origin the gaussian majorant of the adjoint of the fundamental solution behaves nicely. Hence, we surely have for all $\tau >s$
\begin{equation}\label{l1gauss}
\int_0^{\tau}{\Big|} \frac{C}{(\tau-s)^{(n+1)/2}}\exp\left(-\frac{\lambda(.)^2}{4(\tau-s)} \right){\Big |}_{L^1}ds\leq C
\end{equation}
for some constant $C>0$ wich is independent of $t-s$. First we observe that we may consider the spatial convolution first due to Fubini, and apply a Young inequality for fixed $t>s$. We get
\begin{equation}\label{deltaurhok0proof3r}
\begin{array}{ll}
 {\Big |}\frac{\partial^2}{\partial x_m\partial x_q}\delta v^{\rho,k+1,l}_i(\tau,.){\Big |}_{L^2}\\
\\
\leq \rho_l\int_{l-1}^{\tau}{\Big |}\sum_j\frac{\partial}{\partial x_m}\left(\delta v^{\rho,k,l}_j\frac{\partial v^{\rho,k-1,l}_i}{\partial x_j}\right)(s,.){\Big |}_{L^2}{\Big |} \frac{C}{(\tau-s)^{(n+1)/2}}\exp\left(-\frac{\lambda(.)^2}{4(\tau-s)} \right){\Big |}_{L^1}ds\\ 
\\
+\rho_l\int_{l-1}^{\tau}{\Big |}\int_{{\mathbb R}^n}K_{n,m}(.-y){\Big (} \frac{\partial}{\partial x_i}\left( \sum_{j,p=1}^n\left( v^{\rho,k,l}_{p,j}+v^{\rho,k-1,l}_{p,j}\right)(s,y) \right) \times\\
\\
\left( \delta v^{\rho,k,l}_{j,p}(s,y) \right) {\Big)}dy{\Big |}_{L^2}{\Big|} \frac{C}{(\tau-s)^{(n+1)/2}}\exp\left(-\frac{\lambda(.)^2}{4(\tau-s)} \right){\Big |}_{L^1}ds\\
\\
+\rho_l\int_{l-1}^{\tau}{\Big |} \sum_{j=1}^n r^l_j\frac{\partial \delta v^{r,\rho,k,l}_i}{\partial x_j}(s,.){\Big |}_{L^2} {\Big |} \frac{C}{(\tau-s)^{(n+1)/2}}\exp\left(-\frac{\lambda(.)^2}{4(\tau-s)} \right){\Big |}_{L^1}ds\\
\\
+\rho_l\int_{l-1}^{\tau}{\Big |}\left(\sum_{j=1}^n \delta v^{r,\rho,k-1,l}_j\frac{\partial r^l_i}{\partial x_j}\right)(s,.){\Big |}_{L^2} \frac{C}{(\tau-s)^{(n+1)/2}}\exp\left(-\frac{\lambda(.)^2}{4(\tau-s)} \right){\Big |}_{L^1}ds\\
\\
+2\rho_l\int_{l-1}^{\tau}{\Big |}\int_{{\mathbb R}^n}\left( \frac{\partial}{\partial x_i}K_n(.-y)\right) \sum_{j,k=1}^n\left( \frac{\partial \delta v^{r,\rho,k-1,l}_k}{\partial x_j}\frac{\partial r^l_j}{\partial x_k}\right) (s,.){\Big |}_{L^2}\times \\
\\
\times {\Big |} \frac{C}{(\tau-s)^{(n+1)/2}}\exp\left(-\frac{\lambda(.)^2}{4(\tau-s)} \right){\Big |}_{L^1}dzds.
\end{array}
\end{equation}
We still have convolution with respect to time and spatial convolutions involving first order derivatives of the Laplacian kernel. We postpone the consideration of the former and have a closer look at the latter first. As in the case of an uncontrolled scheme we consider partitions of unity 
\begin{equation}
K_{,i}=\phi_{\epsilon}K_{,i}+\left( 1-\phi_{\epsilon}\right)K_{,i},
\end{equation}
where $\phi_{\epsilon}$ is a locally supported smooth function which equals $1$ around the origin as it is defined above in our treatment of the uncontrolled scheme. This has the advantage that we can split up sums involving first order derivatives of the Laplacian kernel where we may use $\phi_{\epsilon}K_{,i}\in L^1$
and $\left( 1-\phi_{\epsilon}\right)K_{,i}\in L^2$. Let us consider the second term on the right side of (\ref{deltaurhok0proof3r}) first. Again it is the most convenient method in order to obatin a local $L^{\infty}\times H^2$ contraction result is to combine it with a $L^{\infty}\times H^{2,\infty}$ contraction result. This makes it easier to extract the function increment $\delta v^{r,\rho,l}_i$. Accordingly and inductively, we assume for every substage $k$ 
\begin{equation}\label{ckck}
\begin{array}{ll}
\max_{p\in \left\lbrace 1,\cdots ,n\right\rbrace }{\big |}\sup_{(s,y)\in [l-1,l]\times {\mathbb R}^n} v^{\rho,k,l}_{p}(s,y){\big |}+\\
\\
\max_{p,j\in \left\lbrace 1,\cdots ,n\right\rbrace }\sup_{(s,y)\in [l-1,l]\times {\mathbb R}^n}{\big |} v^{\rho,k,l}_{p,j}(s,y){\big |}+\\
\\
\max_{p,j,m\in \left\lbrace 1,\cdots ,n\right\rbrace }\sup_{(s,y)\in [l-1,l]\times {\mathbb R}^n}{\big |} v^{\rho,k,l}_{p,j,m}(s,y){\big |}
\leq C_k
\end{array}
\end{equation}
where $C_k>0$ is a nondercreasing sequence of constants which we want to have uniformal bounded.
We have 
\begin{equation}
\begin{array}{ll}
{\big |} v^{\rho,k,l}_{p,j}(s,y)+v^{\rho,k-1,l}_{p,j}(s,y){\big |}\\
\\
\leq {\big |} v^{\rho,k,l}_{p,j}(s,y){\big |}+{\big |}v^{\rho,k-1,l}_{p,j}(s,y){\big |}
\leq C_{k}+C_{k-1}\leq 2 C_k.
\end{array}
\end{equation}
Hence for the second term on the right side of (\ref{deltaurhok0proof3r}) we may write
\begin{equation}\label{laplaciankr}
\begin{array}{ll}
 \rho_l\int_{l-1}^{\tau}{\Big |}\int_{{\mathbb R}^n}K_{n,m}(.-y){\Big (} \frac{\partial}{\partial x_i}\left( \sum_{j,p=1}^n\left( v^{\rho,k,l}_{p,j}+v^{\rho,k-1,l}_{p,j}\right)(s,y) \right) 
\delta v^{\rho,k,l}_{j,p}(s,y){\Big)}dy{\Big |}_{L^2}ds\\
\\
\leq \rho_l\int_{l-1}^{\tau}n\max_{p\in \left\lbrace 1,\cdots ,n\right\rbrace }{\Big |}\int_{{\mathbb R}^n}K_{n,m}(.-y) 2n^2C_k
\delta v^{\rho,k,l}_{j,p}(s,y)  dy{\Big |}_{L^2}ds\\
\\
+\rho_l\int_{l-1}^{\tau}n\max_{p,q\in \left\lbrace 1,\cdots ,n\right\rbrace }{\Big |}\int_{{\mathbb R}^n}K_{n,m}(.-y) 2n^2C_k
\delta v^{\rho,k,l}_{j,p,q}(s,y)  dy{\Big |}_{L^2}ds\\
\\
\leq \rho_l\int_{l-1}^{\tau}n\max_{p\in \left\lbrace 1,\cdots ,n\right\rbrace }{\Big |}\int_{{\mathbb R}^n}\left( \phi_{\epsilon}K_{n,m}\right) (.-y) 2n^2C_k
\delta v^{\rho,k,l}_{j,p}(s,y)  dy{\Big |}_{L^2}ds\\
\\
+\rho_l\int_{l-1}^{\tau}n\max_{p,q\in \left\lbrace 1,\cdots ,n\right\rbrace }{\Big |}\int_{{\mathbb R}^n}\left( \phi_{\epsilon}K_{n,m}\right) (.-y) 2n^2C_k
\delta v^{\rho,k,l}_{j,p,q}(s,y)  dy{\Big |}_{L^2}ds\\
\\
+\rho_l\int_{l-1}^{\tau}n\max_{p\in \left\lbrace 1,\cdots ,n\right\rbrace }{\Big |}\int_{{\mathbb R}^n}\left( 1-\phi_{\epsilon}\right) K_{n,m}(.-y) 2n^2C_k
\delta v^{\rho,k,l}_{j,p}(s,y)  dy{\Big |}_{L^2}ds\\
\\
+\rho_l\int_{l-1}^{\tau}n\max_{p,q\in \left\lbrace 1,\cdots ,n\right\rbrace }{\Big |}\int_{{\mathbb R}^n}\left( 1-\phi_{\epsilon}\right) K_{n,m}(.-y) 2n^2C_k
\delta v^{\rho,k,l}_{j,p,q}(s,y)  dy{\Big |}_{L^2}ds
\end{array}
\end{equation}
The first two summands on the right side of (\ref{laplaciankr}) have a localised kernel in $L^1$ and can therefore be estimated by the Young inequality, i.e., with the bound
\begin{equation}
{\big |}\phi_{\epsilon}K_{,i}{\big |}_{L^1}\leq C_K
\end{equation}
we have
\begin{equation}\label{laplaciankr2}
\begin{array}{ll}
 \rho_l\int_{l-1}^{\tau}{\Big |}\int_{{\mathbb R}^n}K_{n,m}(.-y){\Big (} \frac{\partial}{\partial x_i}\left( \sum_{j,p=1}^n\left( v^{\rho,k,l}_{p,j}+v^{\rho,k-1,l}_{p,j}\right)(s,y) \right) 
\delta v^{\rho,k,l}_{j,p}(s,y){\Big)}dy{\Big |}_{L^2}ds\\
\\
\leq \rho_l\int_{l-1}^{\tau}2n^3C_kC_K\max_{p\in \left\lbrace 1,\cdots ,n\right\rbrace }{\Big |}
\delta v^{\rho,k,l}_{j,p}(s,.){\Big |}_{L^2}ds\\
\\
+\rho_l\int_{l-1}^{\tau}2n^3C_kC_K\max_{p,q\in \left\lbrace 1,\cdots ,n\right\rbrace }{\Big |}
\delta v^{\rho,k,l}_{j,p,q}(s,.) {\Big |}_{L^2}ds\\
\\
+\rho_l\int_{l-1}^{\tau}2n^3C_k\max_{p\in \left\lbrace 1,\cdots ,n\right\rbrace }{\Big |}\int_{{\mathbb R}^n}\left( 1-\phi_{\epsilon}\right) K_{n,m}(.-y)
\delta v^{\rho,k,l}_{j,p}(s,y)  dy{\Big |}_{L^2}ds\\
\\
+\rho_l\int_{l-1}^{\tau}2n^3C_k\max_{p,q\in \left\lbrace 1,\cdots ,n\right\rbrace }{\Big |}\int_{{\mathbb R}^n}\left( 1-\phi_{\epsilon}\right) K_{n,m}(.-y) 
\delta v^{\rho,k,l}_{j,p,q}(s,y)  dy{\Big |}_{L^2}ds
\end{array}
\end{equation}
In order to estimate the last to summands on the right side  we first observe that  
\begin{equation}
\left( 1-\phi_{\epsilon}\right) K_{n,m}(.) \in H^2,
\end{equation}
where we introduce some consatant $C^2_K$ such that
\begin{equation}
{\big |}\left( 1-\phi_{\epsilon}\right) K_{n,m}(.){\big |}_{ H^2}\leq C^2_K.
\end{equation}
We have
\begin{equation}\label{laplaciankr33}
\begin{array}{ll}
+\rho_l\int_{l-1}^{\tau}2n^3C_k\max_{p\in \left\lbrace 1,\cdots ,n\right\rbrace }{\Big |}\int_{{\mathbb R}^n}\left( 1-\phi_{\epsilon}\right) K_{n,m}(.-y)
\delta v^{\rho,k,l}_{j,p}(s,y)  dy{\Big |}_{L^2}ds\\
\\
+\rho_l\int_{l-1}^{\tau}2n^3C_k\max_{p,q\in \left\lbrace 1,\cdots ,n\right\rbrace }{\Big |}\int_{{\mathbb R}^n}\left( 1-\phi_{\epsilon}\right) K_{n,m}(.-y) 
\delta v^{\rho,k,l}_{j,p,q}(s,y)  dy{\Big |}_{L^2}ds\\
\\
\leq \rho_l\int_{l-1}^{\tau}2n^3C_k{\Big |}\int_{{\mathbb R}^n}\left( 1-\phi_{\epsilon}\right) K_{n,m}(.-y) 
\delta v^{\rho,k,l}_{j}(s,y)  dy{\Big |}_{H^2}ds
\end{array}
\end{equation}
Now in case of dimension $n=3$ for the term on the right side of (\ref{laplaciankr33}) we may
 apply the product rule for Sobolev spaces, i.e., the rule that
 \begin{equation}
 {\big |}fg{\big |}_{H^s}\leq C_s{\big |}f{\big |}_{H^s}{\big |}g{\big |}_{H^s}
 \end{equation}
for $s>\frac{n}{2}$ and some constant $C_s$. Hence for this $C_s$ with $s=2$ the right side of (\ref{laplaciankr33}) is bounded by
\begin{equation}
\rho_l\int_{l-1}^{\tau}2n^3C_kC^2_KC_s{\Big |}
\delta v^{\rho,k,l}_{j}(s,.){\Big |}_{H^2}ds.
\end{equation}
Summing up the argument we have
\begin{equation}\label{laplaciankr2223}
\begin{array}{ll}
 \rho_l\int_{l-1}^{\tau}{\Big |}\int_{{\mathbb R}^n}K_{n,m}(.-y){\Big (} \frac{\partial}{\partial x_i}\left( \sum_{j,p=1}^n\left( v^{\rho,k,l}_{p,j}+v^{\rho,k-1,l}_{p,j}\right)(s,y) \right) 
\delta v^{\rho,k,l}_{j,p}(s,y){\Big)}dy{\Big |}_{L^2}ds\\
\\
\leq \rho_l\int_{l-1}^{\tau}2n^3C_kC_K\max_{p\in \left\lbrace 1,\cdots ,n\right\rbrace }{\Big |}
\delta v^{\rho,k,l}_{j,p}(s,.){\Big |}_{L^2}ds\\
\\
+\rho_l\int_{l-1}^{\tau}2n^3C_kC_K\max_{p,q\in \left\lbrace 1,\cdots ,n\right\rbrace }{\Big |}
\delta v^{\rho,k,l}_{j,p,q}(s,.) {\Big |}_{L^2}ds\\
\\
+\rho_l\int_{l-1}^{\tau}2n^3C_kC^2_KC_s{\Big |}
\delta v^{\rho,k,l}_{j}(s,.){\Big |}_{H^2}ds.
\end{array}
\end{equation} 
Lets go back to (\ref{deltaurhok0proof3r}). There are two convolutions with first order spatial derivatives of the Laplacian kernel. One is estimated above, and the other (the last bterm on the right side of (\ref{deltaurhok0proof3r}) involving the control function $r^l_i$ can be estimated similarly if we introduce the constant
\begin{equation}\label{crcr}
\begin{array}{ll}
\max_{i\in\left\lbrace 1,\cdots ,n\right\rbrace }\sup_{\tau\in [l-1,l],y\in {\mathbb R}^n}{\big |}r^l_i(\tau,y){\big |}+\\
\\
\max_{i,p\in\left\lbrace 1,\cdots ,n\right\rbrace }\sup_{\tau\in [l-1,l],y\in {\mathbb R}^n}{\big |}r^l_{i,p}(\tau,y){\big |}\\
\\
+\max_{i,p,q\in\left\lbrace 1,\cdots ,n\right\rbrace }\sup_{\tau\in [l-1,l],y\in {\mathbb R}^n}{\big |}r^l_{i,p,q}(\tau,y){\big |}\leq C_r.
\end{array}
\end{equation}
We apply these estimates to the right side of (\ref{deltaurhok0proof3r}) and get

\begin{equation}\label{deltaurhok0proof3rr}
\begin{array}{ll}
 {\Big |}\frac{\partial^2}{\partial x_m\partial x_q}\delta v^{\rho,k+1,l}_i(\tau,.){\Big |}_{L^2}\\
\\
\leq \rho_l\int_{l-1}^{\tau}{\Big |}\sum_j\frac{\partial}{\partial x_m}\left(\delta v^{\rho,k,l}_j\frac{\partial v^{\rho,k-1,l}_i}{\partial x_j}\right)(s,.){\Big |}_{L^2}{\Big |} \frac{C}{(\tau-s)^{(n+1)/2}}\exp\left(-\frac{\lambda(.)^2}{4(\tau-s)} \right){\Big |}_{L^1}ds\\ 
\\
+\rho_l\int_{l-1}^{\tau}2n^3C_kC^2_KC_s{\Big |}
\delta v^{\rho,k,l}_{j}(s,.){\Big |}_{H^2}{\Big|} \frac{C}{(\tau-s)^{(n+1)/2}}\exp\left(-\frac{\lambda(.)^2}{4(\tau-s)} \right){\Big |}_{L^1}ds\\
\\
+\rho_l\int_{l-1}^{\tau}{\Big |} \sum_{j=1}^n r^l_j\frac{\partial \delta v^{r,\rho,k,l}_i}{\partial x_j}(s,.){\Big |}_{L^2} {\Big |} \frac{C}{(\tau-s)^{(n+1)/2}}\exp\left(-\frac{\lambda(.)^2}{4(\tau-s)} \right){\Big |}_{L^1}ds\\
\\
+\rho_l\int_{l-1}^{\tau}{\Big |}\left(\sum_{j=1}^n \delta v^{r,\rho,k-1,l}_j\frac{\partial r^l_i}{\partial x_j}\right)(s,.){\Big |}_{L^2} \frac{C}{(\tau-s)^{(n+1)/2}}\exp\left(-\frac{\lambda(.)^2}{4(\tau-s)} \right){\Big |}_{L^1}ds\\
\\
2\rho_l\int_{l-1}^{\tau}2n^3C_rC^2_KC_s{\Big |}
\delta v^{\rho,k,l}_{j}(s,.){\Big |}_{H^2}{\Big|} \frac{C}{(\tau-s)^{(n+1)/2}}\exp\left(-\frac{\lambda(.)^2}{4(\tau-s)} \right){\Big |}_{L^1}ds.
\end{array}
\end{equation}
Next we use the upper bounds in (\ref{ckck}) and (\ref{crcr}) in order get
\begin{equation}\label{deltaurhok0proof3rr}
\begin{array}{ll}
 {\Big |}\frac{\partial^2}{\partial x_m\partial x_q}\delta v^{\rho,k+1,l}_i(\tau,.){\Big |}_{L^2}\\
\\
\leq \sum_{j=1}^n\rho_l\int_{l-1}^{\tau}{\Big |}nC_k\left(\delta v^{\rho,k,l}_j\right)(s,.){\Big |}_{H^2}{\Big |} \frac{C}{(\tau-s)^{(n+1)/2}}\exp\left(-\frac{\lambda(.)^2}{4(\tau-s)} \right){\Big |}_{L^1}ds\\ 
\\
+\rho_l\int_{l-1}^{\tau}2n^3C_kC^2_KC_s{\Big |}
\delta v^{\rho,k,l}_{j}(s,.){\Big |}_{H^2}{\Big|} \frac{C}{(\tau-s)^{(n+1)/2}}\exp\left(-\frac{\lambda(.)^2}{4(\tau-s)} \right){\Big |}_{L^1}ds\\
\\
+\sum_{i,j=1}^n \rho_l\int_{l-1}^{\tau}{\Big |} nC_r\frac{\partial \delta v^{r,\rho,k,l}_i}{\partial x_j}(s,.){\Big |}_{H^1} {\Big |} \frac{C}{(\tau-s)^{(n+1)/2}}\exp\left(-\frac{\lambda(.)^2}{4(\tau-s)} \right){\Big |}_{L^1}ds\\
\\
+\sum_{j=1}^n\rho_l\int_{l-1}^{\tau}{\Big |}nC_r\delta v^{r,\rho,k-1,l}_j(s,.){\Big |}_{H^1} \frac{C}{(\tau-s)^{(n+1)/2}}\exp\left(-\frac{\lambda(.)^2}{4(\tau-s)} \right){\Big |}_{L^1}ds\\
\\
+\sum_{j=1}^n2\rho_l\int_{l-1}^{\tau}2n^3C_rC^2_KC_s{\Big |}
\delta v^{\rho,k,l}_{j}(s,.){\Big |}_{H^2}{\Big|} \frac{C}{(\tau-s)^{(n+1)/2}}\exp\left(-\frac{\lambda(.)^2}{4(\tau-s)} \right){\Big |}_{L^1}ds.
\end{array}
\end{equation}
We have a lot of convolutions with respect to time in the latter expression. However, since the factors are positive for each summand we may take the supremum of one factor and integrate the other one over time using the Gaussian estimate above. Inductively, all the functions (integrands in (\ref{deltaurhok0proof3rr}))
 \begin{equation}
 \begin{array}{ll}
s\rightarrow {\Big |}nC_k\left(\delta v^{\rho,k,l}_j\right)(s,.){\Big |}_{H^2}\\
 s\rightarrow {\Big |} nC_r\frac{\partial \delta v^{r,\rho,k,l}_i}{\partial x_j}(s,.){\Big |}_{H^1}
 \end{array}
 \end{equation}
are in $L^{\infty}\left(\left[l-1,l\right]\right)\cap C\left(\left[l-1,l\right]\right)$ (especially, continuous and bounded), hence we may estimate by suprema writing
\begin{equation}\label{deltaurhok0proof3rr}
\begin{array}{ll}
 {\Big |}\frac{\partial^2}{\partial x_m\partial x_q}\delta v^{\rho,k+1,l}_i(\tau,.){\Big |}_{L^2}\\
\\
\leq \sum_{j=1}^n\rho_l\sup_{s\in [l-1,l]}{\Big |}nC_k\left(\delta v^{\rho,k,l}_j\right)(s,.){\Big |}_{H^2}\int_{l-1}^{\tau}{\Big |} \frac{C}{(\tau-s)^{(n+1)/2}}\exp\left(-\frac{\lambda(.)^2}{4(\tau-s)} \right){\Big |}_{L^1}ds\\ 
\\
+\sum_{j=1}^n\rho_l2n^3C_kC^2_KC_s\sup_{s\in [l-1,l]}{\Big |}
\delta v^{\rho,k,l}_{j}(s,.){\Big |}_{H^2}\int_{l-1}^{\tau}{\Big|} \frac{C}{(\tau-s)^{(n+1)/2}}\exp\left(-\frac{\lambda(.)^2}{4(\tau-s)} \right){\Big |}_{L^1}ds\\
\\
+\sum_{i,j=1}^n \rho_l\sup_{s\in [l-1,l]}{\Big |} nC_r\frac{\partial \delta v^{r,\rho,k,l}_i}{\partial x_j}(s,.){\Big |}_{H^1} \int_{l-1}^{\tau}{\Big |} \frac{C}{(\tau-s)^{(n+1)/2}}\exp\left(-\frac{\lambda(.)^2}{4(\tau-s)} \right){\Big |}_{L^1}ds\\
\\
+\sum_{j=1}^n\rho_l\sup_{s\in [l-1,l]}{\Big |}nC_r\delta v^{r,\rho,k-1,l}_j(s,.){\Big |}_{H^1}\int_{l-1}^{\tau}{\Big |} \frac{C}{(\tau-s)^{(n+1)/2}}\exp\left(-\frac{\lambda(.)^2}{4(\tau-s)} \right){\Big |}_{L^1}ds\\
\\
+\sum_{j=1}^n2\rho_l\sup_{s\in [l-1,l]}2n^3C_rC^2_KC_s{\Big |}
\delta v^{\rho,k,l}_{j}(s,.){\Big |}_{H^2}ds\int_{l-1}^{\tau}\int_{l-1}^{\tau}{\Big|} \frac{C}{(\tau-s)^{(n+1)/2}}\exp\left(-\frac{\lambda(.)^2}{4(\tau-s)} \right){\Big |}_{L^1}ds.
\end{array}
\end{equation}

We assume (generic) $C>0$ is the $L^1$-bound of the time-integrated spatial derivative of the Gaussian as in (\ref{l1gauss}). We get
\begin{equation}\label{deltaurhok0proof3rr}
\begin{array}{ll}
 {\Big |}\frac{\partial^2}{\partial x_m\partial x_q}\delta v^{\rho,k+1,l}_i(\tau,.){\Big |}_{L^2}\\
\\
\leq \sum_{j=1}^n\rho_l\sup_{s\in [l-1,l]}{\Big |}nC_k\left(\delta v^{\rho,k,l}_j\right)(s,.){\Big |}_{H^2}C\\ 
\\
+\sum_{j=1}^n\rho_l2n^3C^l_kC^2_KC_s\sup_{s\in [l-1,l]}{\Big |}
\delta v^{\rho,k,l}_{j}(s,.){\Big |}_{H^2}C\\
\\
+\sum_{i,j=1}^n \rho_l\sup_{s\in [l-1,l]}{\Big |} nC^l_r\frac{\partial \delta v^{r,\rho,k,l}_i}{\partial x_j}(s,.){\Big |}_{H^1} C\\
\\
+\sum_{j=1}^n\rho_l\sup_{s\in [l-1,l]}{\Big |}nC^l_r\delta v^{r,\rho,k-1,l}_j(s,.){\Big |}_{H^1}C\\
\\
+\sum_{j=1}^n2\rho_l\sup_{s\in [l-1,l]}2n^3C^l_rC^2_KC_s{\Big |}
\delta v^{\rho,k,l}_{j}(s,.){\Big |}_{H^2}dsC.
\end{array}
\end{equation}
We can estimate the right side withe repect to one $H^2$-norm (with supremum over time). Summing up constants and assuming w.l.o.g. that $C^l_k,C^l_r,C^2_K,C_s,C\geq 1$ we have
\begin{equation}\label{deltaurhok0proof3rrconst*}
\begin{array}{ll}
 {\Big |}\frac{\partial^2}{\partial x_m\partial x_q}\delta v^{\rho,k+1,l}_i(\tau,.){\Big |}_{L^2}\\
\\
\leq \rho_l(nC^l_kC+2n^4C^l_kC^2_KC_s+n^3C^l_r+n^2C_r+4n^3C^l_rC^2_KC_s)\times\\
\\
\times \sup_{s\in [l-1,l]}{\Big |}\delta v^{\rho,k,l}_j(s,.){\Big |}_{H^2}\\
\\
\leq \rho_l4n^4(C^l_k+C^l_r)C^2_KC_sC\sup_{s\in [l-1,l]}{\Big |}\delta v^{\rho,k,l}_j(s,.){\Big |}_{H^2}
\end{array}
\end{equation}
The inductive linear growth of the constants $C^l_k+C^l_r$ with respect to the time step number $l\geq 1$ is consumated by the time step size $\rho_l$. At his point we still have a series of constants $C^l_k$ which depend on the substep number $k$. However, using analogous observations as in the uncontrolled case above we can repeat the argument above for $H^{2,\infty}$ spaces.
We have
\begin{equation}
\max_{i\in \left\lbrace 1,\cdots ,n\right\rbrace }\sup_{\tau\in [l-1,l]}|\delta v^{r,\rho,0,l}_i(\tau,.)|_{H^{2,\infty}}\leq \frac{1}{2},
\end{equation}
and for $k\geq 1$
\begin{equation}
\max_{i\in \left\lbrace 1,\cdots ,n\right\rbrace }\sup_{\tau\in [l-1,l]}|\delta v^{r,\rho,k,l}_i(\tau,.)|_{H^{2,\infty}}\leq \frac{1}{2}\max_{i\in \left\lbrace 1,\cdots ,n\right\rbrace }\sup_{\tau\in [l-1,l]}|\delta v^{r,\rho,k-1,l}_i(\tau,.)|_{H^{2,\infty}}.
\end{equation}
for $\rho_l$ as in (\ref{rhol**}) below.
This leads to the conclusion that we may define
\begin{equation}
C^l=2C^l_1,
\end{equation}
where
\begin{equation}
C^l_k\leq C^l \mbox{ for all }k\geq 1.
\end{equation}
This justifies the induction hypothesis concerning the $H^{2,\infty}$-norm of the approximations $v^{r,\rho,k,l}_i$, and we may choose 
\begin{equation}\label{rhol**}
\rho_l=\frac{1}{2(n+1)^2\rho_l4n^4(C^l+C^l_r)C^2_KC_sC}
\end{equation}
The additional factor in the denominator (apart from $2$) is due to the fact that we have other terms in the $H^2$-norm of course. Indeed we may count $1+n+n^2$ in the classical definition of the $H^2$ norm. Then we observe that the first derivatives can be estimated by the same argument with the first order derivatives of the fundamental solution replaced by the fundamental solution itself, and the first order derivative of the Gaussian a priori majorant replaced by the Gaussian itself. The estimate even simplifies since we do not need the adjoint. Clearly, $L^2$ estimates can be achieved similarly and have the right side (\ref{deltaurhok0proof3rrconst*}) as an upper bound a fortiori. We have
\begin{equation}\label{deltaurhok0proof3rrconst*der1}
\begin{array}{ll}
 {\Big |}\frac{\partial}{\partial x_q}\delta v^{\rho,k+1,l}_i(\tau,.){\Big |}_{L^2}\\
\\
\leq \rho_l(nC^l_kC+2n^4C^lC^2_KC_s+n^3C^l_r+n^2C_r+4n^3C^l_rC^2_KC_s)\times\\
\\
\times \sup_{s\in [l-1,l]}{\Big |}\delta v^{\rho,k,l}_j(s,.){\Big |}_{H^2}\\
\\
\leq \rho_l4n^4(C^l+C^l_r)C^2_KC_sC\sup_{s\in [l-1,l]}{\Big |}\delta v^{\rho,k,l}_j(s,.){\Big |}_{H^2},
\end{array}
\end{equation}
and
\begin{equation}\label{deltaurhok0proof3rrconst*der1}
\begin{array}{ll}
 {\Big |}\delta v^{\rho,k+1,l}_i(\tau,.){\Big |}_{L^2}\\
\\
\leq \rho_l4n^4(C^l_k+C^l_r)C^2_KC_sC\sup_{s\in [l-1,l]}{\Big |}\delta v^{\rho,k,l}_j(s,.){\Big |}_{H^2},
\end{array}
\end{equation}
and with the choice of $\rho_l$ in (\ref{rhol}) we have
\begin{equation}\label{deltaurhok0proof3rrconst*der1}
\begin{array}{ll}
 \sup_{\tau\in [l-1,l]}{\Big |}\delta v^{\rho,k+1,l}_i(\tau,.){\Big |}_{H^2}\\
\\
\leq \frac{1}{2}\sup_{s\in [l-1,l]}{\Big |}\delta v^{\rho,k,l}_j(s,.){\Big |}_{H^2},
\end{array}
\end{equation}
as desired.

\end{proof}

Recall that in the controlled scheme we choose
\begin{equation}
\delta r^l_i=r^l_i-r^l_i(l-1,.):=- \left( v^{*,\rho,1,l}_i-v^{r,\rho,l-1}_i(l-1,.)\right),
\end{equation}
where $v^{r,\rho,1,l}_i:=v^{*,\rho,1,l}_i$.
Next we observe that for this choice we have indeed linear growth on a time scale $\rho_l\sim \frac{1}{l}$, or that the controlled scheme is global. We have
\begin{thm}
For all $m\geq 0$ there is a a constant $C_m$ independent of the time step number $l$ such that
\begin{equation}
|v^{r,\rho,l}_i(l,.)|_{H^m}\leq |v^{r,\rho,l-1}_i(l-1,.)|_{H^m}+C_m
\end{equation}
\end{thm}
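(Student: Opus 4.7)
The plan is to combine the explicit construction of the control $r^l_i$ with the geometric contraction of Lemma~\ref{lemmagen} so that only a uniformly bounded residual contributes to the $H^m$-growth across one time step. I would first decompose
\begin{equation*}
v^{r,\rho,l}_i(l,\cdot) = v^{r,\rho,1,l}_i(l,\cdot) + \sum_{k=2}^{\infty} \delta v^{r,\rho,k,l}_i(l,\cdot),
\end{equation*}
which converges in $H^m$ by Lemma~\ref{lemmagen}, and then split the first iterate further using the identification $v^{r,\rho,1,l}_i = v^{*,\rho,1,l}_i + \delta r^l_i$ on $[l-1,l]$. By the defining relation $\delta r^l_i(\tau,\cdot) = -(v^{*,\rho,1,l}_i(\tau,\cdot) - v^{r,\rho,l-1}_i(l-1,\cdot))$, the two pieces cancel at $\tau = l$, leaving $v^{r,\rho,1,l}_i(l,\cdot) = v^{r,\rho,l-1}_i(l-1,\cdot)$. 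Consequently
\begin{equation*}
v^{r,\rho,l}_i(l,\cdot) - v^{r,\rho,l-1}_i(l-1,\cdot) = \sum_{k=2}^{\infty} \delta v^{r,\rho,k,l}_i(l,\cdot),
\end{equation*}
so the entire $H^m$-increment is carried by the tail.

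I would then bound the tail by iterating the contraction estimate of Lemma~\ref{lemmagen}, obtaining $\sup_\tau |\delta v^{r,\rho,k,l}_i(\tau,\cdot)|_{H^m} \leq 2^{-(k-1)} \sup_\tau |\delta v^{r,\rho,1,l}_i(\tau,\cdot)|_{H^m}$, and summing the resulting geometric series via the triangle inequality in $H^m$ to get
\begin{equation*}
\Big| \sum_{k=2}^\infty \delta v^{r,\rho,k,l}_i(l,\cdot) \Big|_{H^m} \leq \sup_\tau |\delta v^{r,\rho,1,l}_i(\tau,\cdot)|_{H^m}.
\end{equation*}
It then remains to establish an upper bound on the first-iterate increment which is independent of the time step number $l$.

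For this last step I would analyse the classical representation of $v^{*,\rho,1,l}_i$ attached to the linear Cauchy problem (\ref{Navlerayuncontrolledl}): the increment $\delta v^{r,\rho,1,l}_i := v^{*,\rho,1,l}_i - v^{r,\rho,l-1}_i(l-1,\cdot)$ is $\rho_l$ times a sum of terms built from the Gaussian-type bounds of Lemma~\ref{lemgauss} and Lemma~\ref{lemgaussad}, the Leray projection term (estimated in $H^m$ via Lemma~\ref{poisson}), and the first-order transport coefficients $v^{r,\rho,l-1}_j(l-1,\cdot)$. Each of these is bounded by a polynomial expression in $|v^{r,\rho,l-1}_i(l-1,\cdot)|_{H^m}$; by the inductive linear-in-$l$ a priori bound on the latter norm, the prefactor grows at most polynomially in $l$. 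The choice $\rho_l \sim \frac{1}{l}$, coupled with the linearity of the bound on the Leray projection term guaranteed by the control function as in (\ref{psource}) and the generalised Young inequality (\ref{Y3})--(\ref{Y4}), absorbs this growth and yields a bound by a constant $C_m$ depending only on the dimension, the viscosity, the initial data, and $m$.

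The principal obstacle is the careful bookkeeping in this last step: one must verify that the linear-in-$l$ growth of $|v^{r,\rho,l-1}_i(l-1,\cdot)|_{H^m}$, which is precisely what we are proving inductively, is compensated exactly by the factor $\rho_l \sim 1/l$. This reduces to checking that the Leray projection contribution, which is a priori quadratic in the data, in fact produces only a linear prefactor once the quadratic terms involving $r^l_i$ and $r^l_i\cdot r^l_i$ in (\ref{Navleraycontrolledl}) are estimated by the $H^{2,\infty}$-bound of the control. The cancellation of the first-iterate part by the control is therefore structurally essential: without it the surviving residual would be of size $|v^{*,\rho,1,l}_i|_{H^m}$ rather than of size $|\delta v^{r,\rho,1,l}_i|_{H^m}$, a gain of exactly one factor of $\rho_l$ that converts the global scheme from quadratically growing to linearly growing, closing the induction.
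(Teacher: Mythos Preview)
Your approach is correct and is essentially the same as the paper's: the key cancellation $v^{r,\rho,1,l}_i=v^{r,\rho,l-1}_i(l-1,\cdot)$ coming from the definition of $\delta r^l_i$, followed by a geometric summation of the tail $\sum_{k\geq 2}\delta v^{r,\rho,k,l}_i$ via Lemma~\ref{lemmagen}, is exactly what the paper does. The only difference is that your final paragraph re-derives the $l$-independent bound on $\sup_\tau|\delta v^{r,\rho,1,l}_i(\tau,\cdot)|_{H^m}$, whereas the paper simply cites it as part of the \emph{statement} of Lemma~\ref{lemmagen} (the clause $\max_i\sup_\tau|\delta v^{r,\rho,0,l}_i(\tau,\cdot)|_{H^m}\leq\tfrac12$), so that the theorem follows immediately with $C_m=1$.
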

\begin{proof}
For $m=2$ the proof of this lemma is almost included in the proof of lemma \ref{lemmagen}, which is a a natural extension of the lemma \ref{contrlem} above. First the choice
\begin{equation}
r^l_i=r^{l-1}_i(l-1,.)-\left( v^{*,\rho,1,l}_i-v^{r,\rho,l-1}_i(l-1,.)\right)
\end{equation}
implies
\begin{equation}
\begin{array}{ll}
v^{r,\rho,1,l}_i=v^{*,\rho,1,l}_i+\delta r^l_i=v^{r,\rho,l-1}_i(l-1,.).
\end{array}
\end{equation}
Hence
\begin{equation}
\begin{array}{ll}
v^{r,\rho,l}_i=v^{r,\rho,1,l}_i+\sum_{k=2}^{\infty}\delta v^{r,\rho,k,l}_i\\
\\
=v^{r,\rho,l-1}_i(l-1,.)+\sum_{k=2}^{\infty}\delta v^{r,\rho,k,l}_i
\end{array}
\end{equation}
From the preceding lemma we have
\begin{equation}
\begin{array}{ll}
|v^{r,\rho,l}_i(l,.)|_{H^m}=|v^{r,\rho,1,l}_i(l,.)|_{H^m}+\sum_{k=2}^{\infty}|\delta v^{r,\rho,k,l}_i(l,.)|_{H^m}\\
\\
\leq |v^{r,\rho,l-1}_i(l-1,.)|_{H^m}+2|\delta v^{r,\rho,1,l}_i(l,.)|_{H^m}\\
\\
\leq |v^{r,\rho,l-1}_i(l-1,.)|_{H^m}+1.
\end{array}
\end{equation}
We conclude that we have linear growth on a time scale defined by the time step size $\rho_l\sim \frac{1}{l}$. This implies that the first order coefficients of the linearized equations for $v^{\rho,0,l}_i,1\leq i\leq n$ are uniformly bounded, i.e., we have for any H\"{o}lder norm $|.|_{\alpha}$ with H\"{o}lder coefficient $\alpha\in (0,1)$
\begin{equation}
|v^{\rho,l-1}(l-1,.)|_{\alpha}\leq C
\end{equation}
for some constant $C>0$ independently of the time step number $l$. Moreover, we have H\"{o}lder continuity of all correction terms $\delta v^{\rho,k,l}_i,~1\leq i\leq n,~k\geq 1$ uniformly in $\tau\in [l-1,l]$ and independently of the time step number $l\geq 1$. This implies that the scheme is global.
\end{proof}
The argument we have proposed here is considerable simpler then the argument in \cite{K3} where a dynamic control function used is much more complicated. On the other hand, the dynamic control function used in \cite{K3} implies directly that the function is globally bounded and it may stabilize the scheme. Furthermore it allows for a uniform time step size which is an advantage from the numerical point of view. It may also be used in other situations.  Note that the present argument also leads to a different proof of the classical proofs for global $L^2$-existence in \cite{H} and \cite{L}. It would be interesting to apply the scheme using probabilistic methods considered in \cite{FrKa}, \cite{FK}, \cite{K}, and \cite{KKS}. 

 \footnotetext[1]{
\texttt{{kampen@wias-berlin.de, kampen@mathalgorithm.de}}.}

\end{document}